\documentstyle[twoside,amsmath, amssymb, mathrsfs, amsfonts, eucal, epsfig, 11pt]{article}

 \setlength{\textwidth}{6.0in}
 \setlength{\textheight}{9.0in}

\setlength{\oddsidemargin}{0.0cm}
\setlength{\evensidemargin}{0.0cm}
\setlength{\topmargin}{0.0cm}

 \newtheorem{theorem}{Theorem}[section]
 \newtheorem{lemma}[theorem]{Lemma}
 \newtheorem{proposition}[theorem]{Proposition}
 \newtheorem{corollary}[theorem]{Corollary}
 
 \newenvironment{proof}{\begin{trivlist} \item[]{\em Proof.}}{\end{trivlist}}
 \newcount\refno
 \refno=0


\def\CC{{\mathbb C}}
\def\DD{{\mathbb D}}

 \def\RR{{\mathbb R}}
 \def\NN{{\mathbb N}}


 \title{\bf Some aspects of the Bergman and Hardy spaces associated with a class of generalized analytic functions
 \thanks{{Supported by the National Natural
 Science Foundation of China (No. 12071295).}
 \newline
 \indent \,\,$^\dag$Corresponding author.
 \newline
 \indent\,\, E-mail: lizk@shnu.edu.cn (Zh.-K. Li); hhwei@cslg.edu.cn (H.-H. Wei).}}

\author{Zhongkai Li$^{1}$ and Haihua Wei$^{\dag,2}$\\
{\small $^{1}$Department of Mathematics, Shanghai Normal University}\\
{\small Shanghai 200234, China} \\
{\small $^{2}$School of Mathematics and Statistics, Changshu Institute of Technology}\\
{\small Changshu 215500, Jiangsu, China}
}

\date{}

 \begin{document}
 \maketitle \setcounter{page}{1} \pagestyle{myheadings}
 \markboth{Li and Wei}{Some aspects of Bergman and Hardy spaces}

 \begin{abstract}
 \noindent
For $\lambda\ge0$, a $C^2$ function $f$ defined on the unit disk ${{\mathbb D}}$ is said to be
$\lambda$-analytic if $D_{\bar{z}}f=0$, where $D_{\bar{z}}$ is the (complex) Dunkl operator given by  $D_{\bar{z}}f=\partial_{\bar{z}}f-\lambda(f(z)-f(\bar{z}))/(z-\bar{z})$.
The aim of the paper is to study several problems on the associated Bergman spaces $A^{p}_{\lambda}({{\mathbb D}})$ and Hardy spaces $H_{\lambda}^p({{\mathbb D}})$ for $p\ge2\lambda/(2\lambda+1)$, such as boundedness of the Bergman projection, growth of functions, density, completeness, and the dual spaces of $A^{p}_{\lambda}({{\mathbb D}})$ and $H_{\lambda}^p({{\mathbb D}})$, and characterization and interpolation of $A^{p}_{\lambda}({{\mathbb D}})$.

 \vskip .2in
 \noindent
 {\bf 2020 MS Classification:} 30H20, 30H10 (Primary), 30G30, 42A45 (Secondary)
 \vskip .2in
 \noindent
 {\bf Key Words and Phrases:}  Bergman space; Hardy space; Bergman projection; $\lambda$-analytic function
 \end{abstract}

\setcounter{page}{1}

\section{Introduction}

For $\lambda\ge0$, the (complex) Dunkl operators $D_{z}$ and $D_{\bar{z}}$ on the complex plane $\CC$,
as substitutes of
$\partial_z$ and
$\partial_{\bar{z}}$, are defined by
\begin{align*}
    D_{z}f(z)&=\partial_z f+
    \lambda \frac{f(z)-f(\bar{z})}{z-\bar{z}},\\
    D_{\bar{z}}f(z)&=\partial_{\bar{z}}f-
      \lambda\frac{f(z)-f(\bar{z})}{z-\bar{z}}.
\end{align*}
The associated Laplacian, called the $\lambda$-Laplacian, is given by
$\Delta_{\lambda}=4D_{z}D_{\bar{z}}=4D_{\bar{z}}D_{z}$,
which can be written explicitly as
\begin{eqnarray*}
\Delta_{\lambda} f=\frac{\partial^2 f}{ \partial x^2}+\frac
{\partial^2 f}{\partial y^2} +\frac{2\lambda}{y}\frac{\partial
f}{\partial y}-\frac{\lambda}{y^2} [f(z)-f(\bar{z})],\qquad z=x+iy.
\end{eqnarray*}
A $C^2$ function $f$ defined on the unit disk $\DD$ is said to be
$\lambda$-analytic, if $D_{\bar{z}}f=0$; and $f$ is said to be $\lambda-$harmonic, if $\Delta_{\lambda}f=0$.

The measure on the unit disk $\DD$ associated with the operators $D_{z}$ and $D_{\bar{z}}$ is
\begin{eqnarray*}
d\sigma_{\lambda}(z)=c_{\lambda}|y|^{2\lambda}dxdy, \qquad z=x+iy,
\end{eqnarray*}
where $c_{\lambda}=\Gamma(\lambda+2)/\Gamma(\lambda+1/2)\Gamma(1/2)$ so that $\int_{\DD}d\sigma_{\lambda}(z)=1$.
For $0<p<\infty$, we denote by
$L^{p}(\DD;d\sigma_{\lambda})$, or simply by $L_{\lambda}^{p}(\DD)$,
the space of measurable functions $f$ on $\DD$ satisfying
$$
\|f\|_{L_{\lambda}^{p}(\DD)}:=\left(\int_{\DD}|f(z)|^{p}d\sigma_{\lambda}(z)\right)^{1/p}<\infty;
$$
and $L_{\lambda}^{\infty}(\DD)$, or simply $L^{\infty}(\DD)$, is the collection of all essentially
bounded measurable functions on $\DD$ with norm $\|f\|_{L^{\infty}(\DD)} ={\rm esssup}_{z\in\DD} |f(z)|$.
The associated Bergman space $A^{p}_{\lambda}(\DD)$, named the $\lambda$-Bergman
space, consists of those elements in $L_{\lambda}^{p}(\DD)$ that are
$\lambda$-analytic in $\DD$, and the norm of $f\in A^{p}_{\lambda}(\DD)$ is written as $\|f\|_{A_{\lambda}^{p}}$ instead of $\|f\|_{L_{\lambda}^{p}(\DD)}$.

The associated measure on the circle $\partial\DD\simeq[-\pi,\pi]$ is
\begin{eqnarray*}
dm_{\lambda}(\theta)=\tilde{c}_{\lambda}|\sin\theta|^{2\lambda}d\theta, \ \
\ \ \ \ \tilde{c}_{\lambda}=c_{\lambda}/(2\lambda+2).
\end{eqnarray*}
As usual, the $p$-means of a function $f$ defined on $\DD$, for $0<p<\infty$, are given by
\begin{eqnarray*}
M_p(f;r)=\left\{\int_{-\pi}^{\pi}|f(re^{i\theta})|^p\,dm_{\lambda}(\theta)\right\}^{1/p},\qquad 0\le r<1;
\end{eqnarray*}
and $M_{\infty}(f;r)=\sup_{\theta}|f(re^{i\theta})|$.
The $\lambda$-Hardy space $H_{\lambda}^p(\DD)$ is the collection of $\lambda$-analytic functions on $\DD$ satisfying
$$
\|f\|_{H_{\lambda}^p}:=\sup_{0\le r <1}M_p(f;r)<\infty.
$$
Obviously $H_{\lambda}^{\infty}(\DD)$ is identical with $A_{\lambda}^{\infty}(\DD)$. Note that for $0<p<1$, $\|\cdot\|_{X}$ with $X=A_{\lambda}^p(\DD)$ or $H_{\lambda}^p(\DD)$ is not a norm, however $\|f-g\|_X^p$ defines a metric.

It was proved in \cite{LL1} that $f$ is $\lambda$-analytic in $\DD$ if and only if $f$ has the series representation
\begin{eqnarray}\label{anal-series-1-1}
f(z)=\sum_{n=0}^{\infty}c_{n}\phi_{n}^{\lambda}(z), \qquad |z|<1,
\end{eqnarray}
where
\begin{eqnarray*}
\phi_{n}^{\lambda}(z)=\epsilon_n\sum_{j=0}^{n}\frac{(\lambda)_{j}(\lambda+1)_{n-j}}
{j!(n-j)!}\bar{z}^{j}z^{n-j}, \qquad n\in\NN_0,
\end{eqnarray*}
and $\epsilon_n=\sqrt{n!/(2\lambda+1)_{n}}$. Here $\NN_0$ denotes the set of nonnegative integers. For details, see the next section. It is remarked that $\phi_0^{\lambda}(z)\equiv1$, and for $n\ge1$, $\phi_n^{0}(z)=z^n$.

We remark that for $\lambda$-analytic functions, there are no analog of Cauchy's theorem and also that of the Cauchy integral formula. Furthermore, roughly speaking, the product and the composition of $\lambda$-analytic functions are no longer $\lambda$-analytic.

The fundamental theory of the $\lambda$-Hardy spaces $H_{\lambda}^p(\DD)$ for $p\ge p_0$ was studied in $\cite{LL1}$, where
$$
p_0=\frac{2\lambda}{2\lambda+1}.
$$
Such a restriction on the exponent $p$ is due to the fact that $f\in H_{\lambda}^p(\DD)$ for these $p$ has a so-called $\lambda$-harmonic majorization (cf. \cite[Theorem 6.3]{LL1}). This phenomenon also occurs in the study of the Hardy spaces
associated with the Gegenbauer expansions in \cite{MS}, and in the pioneering work \cite{SW1} about the Hardy spaces on the half space $\RR_+^{d+1}$ for $d>1$, where the lower bound of $p$ is $(d-1)/d$.

In this paper we study several basic problems on the $\lambda$-Bergman
spaces $A^{p}_{\lambda}(\DD)$, and also on the $\lambda$-Hardy spaces $H_{\lambda}^p(\DD)$, such as boundedness of the Bergman projection, growth of functions, density, completeness, and the dual spaces of $A^{p}_{\lambda}(\DD)$ and $H_{\lambda}^p(\DD)$, and also characterization and interpolation of $A^{p}_{\lambda}(\DD)$.



There are rich theories of the Hardy spaces, the Bergman spaces, and other spaces of (usual) analytic functions on the unit disk and even more general domains in the plane or in higher-dimensional complex spaces.
See \cite{Du2,Ga1,Ko1,Zy1} for the Hardy spaces, and \cite{DS1,HKZ1,Zhu1,Zhu2} for the Bergman spaces.

The paper is organized as follows. In Section 2 we recall some basic knowledge about $\lambda$-analytic functions and $\lambda-$harmonic functions on the disk $\DD$. The Bergman projection associated to the $\lambda$-Bergman spaces is introduced in Section 3 and is proved to be bounded from $L_{\lambda}^{p}(\DD)$ into $A^{p}_{\lambda}(\DD)$ for $1<p<\infty$. In Section 4 we obtain the growth estimates of the $p$-means $M_p(f;r)$ of functions in $A^{p}_{\lambda}(\DD)$ and the point estimates of functions in both $H_{\lambda}^p(\DD)$ and $A^{p}_{\lambda}(\DD)$. Section 5 is devoted to density, completeness, and duality of the $\lambda$-Hardy and $\lambda$-Bergman spaces, and Section 6 to a characterization by the operator $D_z$ of the space $A^{p}_{\lambda}(\DD)$ for $1\le p<\infty$, and also an interpolation theorem of $A^{p}_{\lambda}(\DD)$.



For $0<p<\infty$, we denote by $L^p(\partial\DD;dm_{\lambda})$, or simply by $L_{\lambda}^{p}(\partial\DD)$,
the space of measurable functions $f$ on $\partial\DD$ satisfying
$\|f\|_{L_{\lambda}^{p}(\partial\DD)}:=\left(\int_{-\pi}^{\pi}|f(e^{i\theta})|^p
dm_{\lambda}(\theta)\right)^{1/p}<\infty$,
and for $p=\infty$, $L^{\infty}(\partial\DD;dm_{\lambda})=L^{\infty}(\partial\DD)$ as usual, with norm $\|f\|_{L^{\infty}(\partial\DD)} ={\rm esssup}_{\theta} |f(e^{i\theta})|$. In addition, ${\frak B}_{\lambda}(\partial\DD)$ denotes
the space of Borel measures $d\nu$ on $\partial\DD$ for which
$\|d\nu\|_{{\frak B_{\lambda}(\partial\DD)}}
=\tilde{c}_{\lambda}\int_{-\pi}^{\pi}|\sin\theta|^{2\lambda}|d\nu(\theta)|$
are finite.  Throughout the paper, the notation ${\mathcal{X}}\lesssim {\mathcal{Y}}$ or ${\mathcal{Y}}\gtrsim {\mathcal{X}}$ means that ${\mathcal{X}}\le c{\mathcal{Y}}$ for some positive constant $c$ independent of variables, functions, etc., and ${\mathcal{X}}\asymp {\mathcal{Y}}$ means that both ${\mathcal{X}}\lesssim {\mathcal{Y}}$ and ${\mathcal{Y}}\lesssim {\mathcal{X}}$ hold.

\section{$\lambda$-analytic and $\lambda$-harmonic functions}

Most materials of this section come from \cite{LL1}. This topic is motivated by C. Dunkl's work \cite{Dun3}, where he built up a framework associated with the dihedral group
$G=D_k$ on the disk $\DD$. Our work here and also that in \cite{LL1} focuses on the special case with $G=D_1$ having
the reflection $z\mapsto\overline{z}$ only, to find possibilities to develop a deep theory of associated function spaces. We note that C. Dunkl has a general theory named after him associated with reflection-invariance on the Euclidean spaces, see \cite{Dun1}, \cite{Dun2} and \cite{Dun4} for example.

\begin{proposition} \label{orthonirmal-basis-a} {\rm(\cite{Dun3,Dun4})}
For $n\ge1$ and $z=re^{i\theta}$, we have
\begin{align*}
\phi_{n}^{\lambda}(z)&=\epsilon_nr^{n}\left[\frac{n+2\lambda}{2\lambda}P_{n}^{\lambda}
(\cos\theta)+i\sin\theta P_{n-1}^{\lambda+1}(\cos\theta)\right],\\
\bar{z}\overline{\phi_{n-1}^{\lambda}(z)}
&=\epsilon_{n-1}r^{n}\left[\frac{n}{2\lambda}P_{n}^{\lambda}(\cos\theta)-
 i\sin\theta P_{n-1}^{\lambda+1}(\cos\theta)\right],
\end{align*}
where $P_{n}^{\lambda}(t)$, $n\in\NN_0$, are the Gegenbauer
polynomials (cf. \cite{Sz}), and $P_{-1}^{\lambda+1}=0$. Moreover, the system
\begin{align*}
\{\phi_{n}^{\lambda}(e^{i\theta}): \,\, n\in\NN_0\}\cup
\{e^{-i\theta}\phi_{n-1}^{\lambda}(e^{-i\theta}): \,\,n\in\NN\}
\end{align*}
is an orthonormal basis of the Hilbert space
$L_{\lambda}^2(\partial\DD)$.
\end{proposition}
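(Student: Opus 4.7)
The plan is two-fold: first derive the closed form for $\phi_n^\lambda(z)$ and its companion $\bar z\,\overline{\phi_{n-1}^\lambda(z)}$ as combinations of Gegenbauer polynomials, and then combine these expressions with the classical Gegenbauer orthogonality, together with a parity argument, to obtain the orthonormal basis property on $L_\lambda^2(\partial\DD)$.

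For the first identity, writing $z=re^{i\theta}$ makes $\bar z^j z^{n-j}=r^n e^{i(n-2j)\theta}$, so $\phi_n^\lambda(re^{i\theta})$ equals $\epsilon_n r^n$ times the finite Fourier sum $\sum_{j=0}^n\frac{(\lambda)_j(\lambda+1)_{n-j}}{j!(n-j)!}\,e^{i(n-2j)\theta}$. The key algebraic step is the Pochhammer identity $(\lambda+1)_{n-j}=\frac{\lambda+n-j}{\lambda}(\lambda)_{n-j}$ combined with the decomposition $\lambda+n-j=\frac{n+2\lambda}{2}+\frac{n-2j}{2}$, which splits the sum into a symmetric and an antisymmetric piece. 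The symmetric piece, via the classical expansion $P_n^\lambda(\cos\theta)=\sum_{j=0}^n\frac{(\lambda)_j(\lambda)_{n-j}}{j!(n-j)!}e^{i(n-2j)\theta}$, equals $\frac{n+2\lambda}{2\lambda}P_n^\lambda(\cos\theta)$. The antisymmetric piece is treated through $(n-2j)e^{i(n-2j)\theta}=-i\frac{d}{d\theta}e^{i(n-2j)\theta}$, so that termwise differentiation followed by $\frac{d}{dt}P_n^\lambda(t)=2\lambda P_{n-1}^{\lambda+1}(t)$ collapses it to $i\sin\theta P_{n-1}^{\lambda+1}(\cos\theta)$. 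For the companion formula, one expands $\bar z\,\overline{\phi_{n-1}^\lambda(z)}$, reindexes $k=n-j$, applies the dual identity $(\lambda+1)_{k-1}=(\lambda)_k/\lambda$, and repeats the same decomposition; the opposite sign in front of $\sin\theta P_{n-1}^{\lambda+1}(\cos\theta)$ then emerges automatically from the $\bar z$ factor.

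For the basis property, note that on $\partial\DD$ each element of the proposed system has the form $\alpha P_n^\lambda(\cos\theta)+i\beta\sin\theta P_{n-1}^{\lambda+1}(\cos\theta)$ with real $\alpha,\beta$. The change of variables $t=\cos\theta$ turns the Gegenbauer orthogonality with weight $(1-t^2)^{\lambda-1/2}$ into orthogonality of $\{P_n^\lambda(\cos\theta)\}_{n\ge0}$ against $|\sin\theta|^{2\lambda}d\theta$; applied at parameter $\lambda+1$ (with weight $(1-t^2)^{\lambda+1/2}$) it gives orthogonality of $\{\sin\theta P_{n-1}^{\lambda+1}(\cos\theta)\}_{n\ge1}$ against the same measure. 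Cross inner products between an even and an odd function vanish by the symmetry $\theta\mapsto-\theta$. These observations dispose of all orthogonality relations between elements of the system with different indices $n\ne m$.

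The main obstacle will be the diagonal case: checking that $\phi_n^\lambda(e^{i\theta})$ and $e^{-i\theta}\phi_{n-1}^\lambda(e^{-i\theta})$ are orthogonal at the same $n$, and that each has unit norm. For this step one substitutes the explicit Gegenbauer $L^2$-norms for parameters $\lambda$ and $\lambda+1$ and verifies that the coefficients $\frac{n+2\lambda}{2\lambda}$ and $\frac{n}{2\lambda}$, the opposite signs in front of the imaginary terms, and the normalization $\epsilon_n=\sqrt{n!/(2\lambda+1)_n}$ conspire to produce the required cancellation in the inner product and to normalize each member. Completeness then follows since $\{P_n^\lambda(\cos\theta):n\in\NN_0\}$ spans all even trigonometric polynomials and $\{\sin\theta P_{n-1}^{\lambda+1}(\cos\theta):n\in\NN\}$ spans the odd ones, and polynomials in $\cos\theta,\sin\theta$ are dense in $L_\lambda^2(\partial\DD)$ by a Weierstrass-type argument adapted to the weight $|\sin\theta|^{2\lambda}$.
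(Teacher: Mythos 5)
The paper offers no proof of this proposition at all: it is imported verbatim from Dunkl's papers \cite{Dun3,Dun4}, so there is no in-paper argument to compare against. Your self-contained derivation is correct, and the two steps you leave as ``to be verified'' do in fact close. The algebraic core checks out: $(\lambda+1)_{n-j}=\frac{\lambda+n-j}{\lambda}(\lambda)_{n-j}$ together with $\lambda+n-j=\frac{n+2\lambda}{2}+\frac{n-2j}{2}$ splits the coefficient sum into $\frac{n+2\lambda}{2\lambda}$ times the Fourier expansion $P_n^\lambda(\cos\theta)=\sum_{j=0}^n\frac{(\lambda)_j(\lambda)_{n-j}}{j!(n-j)!}e^{i(n-2j)\theta}$ plus a piece that $(n-2j)e^{i(n-2j)\theta}=-i\frac{d}{d\theta}e^{i(n-2j)\theta}$ and $\frac{d}{dt}P_n^\lambda(t)=2\lambda P_{n-1}^{\lambda+1}(t)$ convert into $i\sin\theta\,P_{n-1}^{\lambda+1}(\cos\theta)$. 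For the companion, after reindexing $k=n-j$ the relevant decomposition is $k=\frac{n}{2}-\frac{n-2k}{2}$, and the minus sign comes from the conjugation reversing $e^{i(n-2j)\theta}$ into $e^{-i(n-2j)\theta}$ combined with that decomposition --- slightly more than ``automatic from the $\bar z$ factor,'' but harmless. The deferred diagonal computation works: writing $h_n^\lambda=\int_{-1}^1[P_n^\lambda(t)]^2(1-t^2)^{\lambda-1/2}\,dt=\frac{\pi\,2^{1-2\lambda}\Gamma(n+2\lambda)}{n!\,(n+\lambda)\Gamma(\lambda)^2}$, one checks $\frac{n(n+2\lambda)}{4\lambda^2}h_n^\lambda=h_{n-1}^{\lambda+1}$, which is exactly the cancellation that makes $\phi_n^\lambda(e^{i\theta})$ and $e^{-i\theta}\phi_{n-1}^\lambda(e^{-i\theta})$ orthogonal at the same index $n$, and the duplication formula $\Gamma(2\lambda+1)=\pi^{-1/2}2^{2\lambda}\Gamma(\lambda+\frac12)\Gamma(\lambda+1)$ then yields unit norms with the stated $\epsilon_n$ and the normalization constant $\tilde c_\lambda$. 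The completeness argument via the even/odd splitting and the change of variables $t=\cos\theta$ is standard and sound. The only caveat worth recording is that the displayed formulas require $\lambda>0$ (the coefficients $\frac{n+2\lambda}{2\lambda}$ and $\frac{n}{2\lambda}$ are singular at $\lambda=0$, where the statement degenerates to $\phi_n^0(e^{i\theta})=e^{in\theta}$ via the Chebyshev limit); your argument tacitly assumes this, as does the proposition itself.
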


It follows from \cite[Proposition 2.2]{LL1} that,
$\phi_{n}^{\lambda}(z)$ ($n\in\NN_0$) is $\lambda$-analytic, and $\bar{z}\overline{\phi_{n-1}^{\lambda}}(z)$ ($n\in\NN$)
is $\lambda$-harmonic; moreover,
\begin{align}
D_{z}\phi_{n}^{\lambda}(z)=\sqrt{n(n+2\lambda)}\phi_{n-1}^{\lambda}(z) \quad \hbox{and} \quad
D_{z}(z\phi_{n-1}^{\lambda}(z))=(n+\lambda)\phi_{n-1}^{\lambda}(z).\label{Tzphi-2}
\end{align}

The function $\phi_{n}^{\lambda}(z)$ ($n\in\NN_0$) has a closed representation as given in \cite[(29)]{LL1}, that is,
\begin{eqnarray} \label{A-operator-4}
\epsilon_n\phi_n(z)
=2^{2\lambda+1}\tilde{c}_{\lambda-1/2}\int_{0}^{1}(sz+(1-s)\bar{z})^n(1-s)^{\lambda-1}s^{\lambda}ds.
\end{eqnarray}
It is easy to see that
\begin{eqnarray}\label{phi-bound-1}
|\phi_n(z)|\le\epsilon_n^{-1}|z|^n\asymp (n+1)^{\lambda}|z|^n.
\end{eqnarray}

In what follows, we write $\phi_n(z)=\phi_n^{\lambda}(z)$ for simplicity.

We say $f$ to be a $\lambda$-analytic polynomial, or a $\lambda$-harmonic polynomial, if it is a finite linear combination of elements in the system $\{\phi_n(z): \,n\in\NN_0\}$, or in the system
\begin{align}\label{harmonic-polynomial-1}
\{\phi_n(z): \,\, n\in\NN_0\}\cup\{\bar{z}\overline{\phi_{n-1}(z)}: \,\, n\in\NN\}.
\end{align}

The $\lambda$-Cauchy kernel $C(z,w)$ and the $\lambda$-Poisson kernel $P(z,w)$, which reproduce, associated with the measure $dm_{\lambda}$ on the circle $\partial\DD$,
all $\lambda$-analytic polynomials
and $\lambda$-harmonic polynomials respectively, are given by (cf. \cite{Dun3})
\begin{align}
C(z,w)&=\sum_{n=0}^{\infty}\phi_{n}(z)\overline{\phi_{n}(w)},\label{Cauchy-kernel-2-1}\\
P(z,w)&=C(z,w)+\bar{z}w C(w,z).\label{Poisson-kernel-2-1}
\end{align}
The series in $C(z,w)$ is convergent absolutely for $zw\in\DD$ and uniformly for $zw$ in a compact subset of $\DD$, and from \cite{Dun3} (see \cite{LL1} also),
\begin{align}
C(z,w)&=\frac{1}{1-z\bar{w}}P_{0}(z,w), \qquad zw\in\DD, \label{Cauchy-kernel-2-2}\\
P(z,w)&=\frac{1-|z|^{2}|w|^{2}}{|1-z\bar{w}|^{2}}P_{0}(z,w),  \qquad zw\in\DD, \nonumber
\end{align}
where
\begin{align} \label{Poi-0}
P_{0}(z,w)&=\frac{1}{|1-zw|^{2\lambda}}{}_2\!F_{1}\Big({\lambda,\lambda
\atop
  2\lambda+1};\frac{4({\rm Im} z)({\rm Im}
  w)}{|1-zw|^{2}}\Big)\nonumber\\
&=\frac{1}{|1-z\bar{w}|^{2\lambda}}{}_2\!F_{1}\Big({\lambda,\lambda+1
\atop
  2\lambda+1};-\frac{4({\rm Im} z)({\rm Im}
  w)}{|1-z\bar{w}|^{2}}\Big),
\end{align}
and ${}_2\!F_{1}[a,b;c;t]$ is the Gauss hypergeometric function.

A $\lambda$-harmonic function on $\DD$ has a series representation
in terms of the system (\ref{harmonic-polynomial-1}). Precisely by \cite[Theorem 3.1]{LL1}, if $f$ is a $\lambda$-harmonic function on $\DD$, then there are two
sequences $\{c_n\}$ and $\{\tilde{c}_n\}$ of complex numbers, such
that
\begin{eqnarray*}
f(z)=\sum_{n=0}^{\infty}
c_n \phi_{n}(z)+
  \sum_{n=1}^{\infty} \tilde{c}_n \bar{z}\overline{\phi_{n-1}(z)}
\end{eqnarray*}
for $z\in\DD$; furthermore, the two sequences above are given by
\begin{align*}
c_n&=\lim_{r\rightarrow1-}\int_{-\pi}^{\pi}f(re^{i\varphi})\overline{\phi_{n}(e^{i\varphi})}
dm_{\lambda}(\varphi),\\
\tilde{c}_n&=\lim_{r\rightarrow1-}\int_{-\pi}^{\pi}f(re^{i\varphi})
e^{i\varphi}\phi_{n-1}(e^{i\varphi})dm_{\lambda}(\varphi),
\end{align*}
and satisfy the condition that, for each real $\gamma$, the series
$\sum_{n\ge1}n^{\gamma}(|c_n|+|\tilde{c}_n|)r^n$ converges uniformly
for $r$ in every closed subset of $[0,1)$.

As stated in the first section, a $\lambda$-analytic
function $f$ on $\DD$ has a series representation in terms of the system $\{\phi_n(z): \,n\in\NN_0\}$, as in (\ref{anal-series-1-1}); and moreover, $f$ could also be characterized by a Cauchy-Riemann type system. We summarize these conclusions as follows.

\begin{proposition} \label{anal-thm} {\rm(\cite[Theorem 3.7]{LL1})}
For a $C^2$ function $f=u+iv$ defined on $\DD$,
the following statements are equivalent:

{\rm (i)}  $f$ is $\lambda$-analytic;

{\rm (ii)} $u$ and $v$ satisfy the generalized Cauchy-Riemann
system
\begin{eqnarray*}
\left \{\partial_{x}u=D_{y}v,\atop  D_{y}u=-\partial_{x}v,\right.
\end{eqnarray*}
where
\begin{align*}
D_y u(x,y)=\partial_y u(x,y)+\frac{\lambda}{y}\left[u(x,y)-u(x,-y)\right].
\end{align*}

{\rm (iii)}  $f$ has the series representation
\begin{eqnarray} \label{anal-series-1}
f(z)=\sum_{n=0}^{\infty}c_{n}\phi_{n}(z), \qquad |z|<1,
\end{eqnarray}
where
$$
c_{n}=\lim_{r\rightarrow 1-}\int_{-\pi}^{\pi}f(re^{i\varphi})
\overline{\phi_{n}(e^{i\varphi})}dm_{\lambda}(\varphi).
$$
Moreover, for each real $\gamma$, the series
$\sum_{n\ge1}n^{\gamma}|c_n|r^n$ converges uniformly for $r$ in
every closed subset of $[0,1)$.
\end{proposition}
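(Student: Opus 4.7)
The plan is to prove the three directions (i) $\Leftrightarrow$ (ii), (i) $\Rightarrow$ (iii), and (iii) $\Rightarrow$ (i) separately, and then derive the coefficient formula and convergence statement.

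For (i) $\Leftrightarrow$ (ii), I would simply unpack the definition of $D_{\bar z}$ and split into real and imaginary parts. Writing $f=u+iv$, note that $z-\bar z=2iy$ and
$$\partial_{\bar z} f=\tfrac12(\partial_x u-\partial_y v)+\tfrac{i}{2}(\partial_x v+\partial_y u),\qquad \frac{f(z)-f(\bar z)}{z-\bar z}=\frac{v(x,y)-v(x,-y)}{2y}-i\,\frac{u(x,y)-u(x,-y)}{2y}.$$
Setting the real and imaginary parts of $D_{\bar z}f$ to zero gives exactly $\partial_x u=D_y v$ and $\partial_x v=-D_y u$. This step is routine and will just amount to careful bookkeeping.

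For (i) $\Rightarrow$ (iii), the key is to piggy-back on the $\lambda$-harmonic series representation recalled just before the proposition. Since $\Delta_\lambda=4D_z D_{\bar z}$, any $\lambda$-analytic function is $\lambda$-harmonic, hence admits an expansion
$$f(z)=\sum_{n=0}^{\infty} c_n\phi_n(z)+\sum_{n=1}^{\infty}\tilde c_n\, \bar z\,\overline{\phi_{n-1}(z)}$$
with rapidly decaying coefficients. Applying $D_{\bar z}$ term by term is legal because of the uniform convergence on compacta recalled earlier; using $D_{\bar z}\phi_n=0$ and the conjugate of \eqref{Tzphi-2}, namely $D_{\bar z}(\bar z\,\overline{\phi_{n-1}(z)})=(n+\lambda)\overline{\phi_{n-1}(z)}$ (which follows from $\overline{D_z f}=D_{\bar z}\bar f$), we obtain
$$0=D_{\bar z}f(z)=\sum_{n=1}^{\infty}(n+\lambda)\tilde c_n\,\overline{\phi_{n-1}(z)}.$$
Now I would test this identity against $\overline{\phi_{m-1}(e^{i\theta})}=\phi_{m-1}(e^{-i\theta})$ on a circle $|z|=r<1$ and use the $L^2_\lambda(\partial\DD)$-orthogonality from Proposition \ref{orthonirmal-basis-a} (together with the fact $\overline{\phi_{n-1}(z)}=\phi_{n-1}(\bar z)$, so the terms $\overline{\phi_{n-1}(e^{i\theta})}$ are exactly $\phi_{n-1}(e^{-i\theta})$, which belong to the basis system). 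Orthogonality forces $\tilde c_n=0$ for all $n\geq1$, leaving the series (\ref{anal-series-1}). The integral formula for $c_n$ is then obtained by multiplying the series by $\overline{\phi_n(e^{i\varphi})}$, integrating over the circle $|z|=r$ against $dm_\lambda$, and letting $r\to 1^-$; uniform convergence on $|z|\leq r$ (see below) makes this legitimate.

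For (iii) $\Rightarrow$ (i), I would use the bound $|\phi_n(z)|\lesssim (n+1)^{\lambda}|z|^n$ from \eqref{phi-bound-1}: convergence of the series at some radius forces $|c_n|$ to decay at a geometric rate relative to any smaller radius, which yields not only absolute uniform convergence on compacta but also the stronger statement that $\sum_n n^\gamma|c_n|r^n$ converges uniformly for $r$ in compact subsets of $[0,1)$ for every real $\gamma$. This uniform convergence permits termwise application of $D_{\bar z}$, and since $D_{\bar z}\phi_n=0$, we conclude $D_{\bar z}f=0$. The main subtlety of the whole proof is the argument in (i) $\Rightarrow$ (iii) that the antiholomorphic part $\tilde c_n\,\bar z\,\overline{\phi_{n-1}(z)}$ vanishes — namely identifying the image under $D_{\bar z}$ correctly via the conjugation identity $\overline{D_z f}=D_{\bar z}\bar f$ and invoking the orthonormal basis from Proposition \ref{orthonirmal-basis-a}; the remaining ingredients are essentially routine manipulations of absolutely convergent series.
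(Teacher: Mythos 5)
This proposition is quoted in the paper from \cite[Theorem 3.7]{LL1} and no proof is given in the present text, so there is nothing internal to compare against; judged on its own, your argument is essentially sound and uses exactly the ingredients the paper assembles around the statement (the $\lambda$-harmonic expansion theorem, the orthonormal basis of Proposition \ref{orthonirmal-basis-a}, the identity (\ref{Tzphi-2}) and the bound (\ref{phi-bound-1})), which is very plausibly the route taken in \cite{LL1}. Two points need shoring up. First, in (iii) $\Rightarrow$ (i) you infer geometric decay of $|c_n|$ from convergence of the series, but (\ref{phi-bound-1}) is only an \emph{upper} bound on $|\phi_n|$; you need a lower bound somewhere on each circle, e.g.\ the exact value $\phi_n(r)=\epsilon_n^{-1}r^n$ at the positive real point (from (\ref{A-operator-4}) or from $P_n^{\lambda}(1)=(2\lambda)_n/n!$), which gives $|c_n|\lesssim \epsilon_n r^{-n}$ and then the claimed uniform convergence of $\sum n^{\gamma}|c_n|\rho^n$ for $\rho<r$. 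Second, in the orthogonality step of (i) $\Rightarrow$ (iii), $\phi_{n-1}(e^{-i\theta})$ is \emph{not} itself a member of the basis system; the basis contains $e^{-i\theta}\phi_{n-1}(e^{-i\theta})$. This is harmless --- multiply your identity by the unimodular factor $e^{-i\theta}$ (or observe that multiplying an orthonormal system by a fixed unimodular function preserves orthonormality) and use the homogeneity $\phi_{n-1}(re^{-i\theta})=r^{n-1}\phi_{n-1}(e^{-i\theta})$ to conclude $\tilde c_n=0$ --- but as written the appeal to the basis is slightly off. With these repairs the proof is complete; the conjugation identity $\overline{D_zg}=D_{\bar z}\overline{g}$ and the resulting formula $D_{\bar z}\big(\bar z\,\overline{\phi_{n-1}(z)}\big)=(n+\lambda)\overline{\phi_{n-1}(z)}$ are correct, as is the bookkeeping in (i) $\Leftrightarrow$ (ii).
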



The $\lambda$-Poisson integral of $f\in L_{\lambda}^{1}(\partial\DD)$ is defined by
\begin{align} \label{Poisson-integral-1}
P(f;z)=\int_{-\pi}^{\pi}f(e^{i\varphi})P(z,e^{i\varphi})
dm_{\lambda}(\varphi),\qquad z=re^{i\theta}\in\DD,
\end{align}
and that of a measure $d\nu\in{\frak B}_{\lambda}(\partial\DD)$ by
\begin{eqnarray} \label{Poisson-Borel-1}
P(d\nu;z)=\tilde{c}_{\lambda}\int_{-\pi}^{\pi}P(z,e^{i\varphi})
|\sin\varphi|^{2\lambda}d\nu(\varphi),\qquad z=re^{i\theta}\in\DD.
\end{eqnarray}

\begin{proposition} \label{Poi-property-1} {\rm(\cite[Propositions 2.4 and 2.5]{LL1})}
Let $f\in L_{\lambda}^{1}(\partial\DD)$. Then

 {\rm (i)} the $\lambda$-Poisson integral $u(x,y)=P(f;z)$ ($z=x+iy$) is
$\lambda$-harmonic
in $\DD$;

{\rm (ii)} if write $P_r(f;\theta)=P(f;re^{i\theta})$, we have the ``semi-group" property
$$
P_s(P_rf;\theta)=P_{s\,r}(f;\theta), \qquad 0\le s,r<1;
$$

{\rm (iii)} $P(f;z) \ge 0$ if $f \ge 0$;

{\rm (iv)} for $f\in X=L_{\lambda}^{p}(\partial\DD)$ ($1\le p\le\infty$), or $C(\partial\DD)$,
$\|P_r(f;\cdot)\|_{X}\le \|f\|_{X}$;

{\rm (v)} for $f\in X=L_{\lambda}^{p}(\partial\DD)$ ($1\le p<\infty$), or
$C(\partial\DD)$, $\lim_{r\rightarrow1-}\|P_r(f;\cdot)-f\|_X=0$;

{\rm (vi)} the conclusions in (i)-(iii) are true also for $P_r(d\nu;\cdot)$ in place of $P_r(f;\cdot)$, if
$d\nu\in{\frak B}_{\lambda}(\partial\DD)$; moreover, $\|P_r(d\nu;\cdot)\|_{L_{\lambda}^{1}(\partial\DD)}\le \|d\nu\|_{\frak B_{\lambda}(\partial\DD)}$.
\end{proposition}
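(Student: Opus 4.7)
The plan is to derive (i)--(v) from four basic properties of the $\lambda$-Poisson kernel $P(z,w)$: the series representation (\ref{Poisson-kernel-2-1}), reproduction of $\lambda$-harmonic polynomials, the symmetry $\overline{P(z,w)}=P(w,z)$ (which follows immediately from (\ref{Cauchy-kernel-2-1})--(\ref{Poisson-kernel-2-1})), and nonnegativity. Part (vi) is then obtained from (i)--(v) by substituting $|\sin\varphi|^{2\lambda}\,d\nu(\varphi)$ for $f(e^{i\varphi})\,dm_\lambda(\varphi)$ throughout; every step carries over because the bound $\|d\nu\|_{\mathfrak{B}_\lambda(\partial\DD)}$ replaces $\|f\|_{L^1_\lambda}$ in the only place it matters.

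For (i), I would use (\ref{Poisson-kernel-2-1}) together with (\ref{Cauchy-kernel-2-1}) to expand $P(z,e^{i\varphi})$ in $z$ as a series whose summands $\phi_n(z)\overline{\phi_n(e^{i\varphi})}$ and $\bar z e^{i\varphi}\phi_m(e^{i\varphi})\overline{\phi_m(z)}$ are $\lambda$-harmonic in $z$ by \cite[Proposition 2.2]{LL1}. The growth bound (\ref{phi-bound-1}) ensures rapid convergence of the series and of its derivatives on compact subsets of $\DD$, so term-by-term application of $\Delta_\lambda$ gives $\Delta_\lambda P(\cdot,e^{i\varphi})\equiv 0$; differentiation under the integral in (\ref{Poisson-integral-1}) then transfers this to $u$. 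For (iii), nonnegativity of $P$ is the delicate point, since the hypergeometric form (\ref{Poi-0}) has an argument of variable sign; I would instead apply the intertwining formula (\ref{A-operator-4}) term by term in (\ref{Poisson-kernel-2-1}) to display $P(z,e^{i\varphi})$ as a positive weighted average of ordinary Poisson kernels evaluated on the segment $\{se^{i\varphi}+(1-s)e^{-i\varphi}:0\le s\le 1\}$, which makes nonnegativity visible.

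For (ii), I would expand $f\in L_\lambda^2(\partial\DD)$ in the orthonormal basis of Proposition \ref{orthonirmal-basis-a}; using (\ref{Poisson-kernel-2-1}) and orthonormality shows that $P_r(f;\cdot)$ has the same expansion with the $n$-th coefficient multiplied by $r^n$, so iteration yields $P_s(P_r f;\cdot)=P_{sr}(f;\cdot)$ in $L_\lambda^2$, and density of $\lambda$-harmonic polynomials together with (iv) extends it to $f\in L_\lambda^1$. For (iv), since (iii) makes $P(re^{i\theta},e^{i\varphi})\,dm_\lambda(\varphi)$ a probability measure for each $r,\theta$, Jensen's inequality gives $|P_r(f;\theta)|^p\le\int|f(e^{i\varphi})|^p P(re^{i\theta},e^{i\varphi})\,dm_\lambda(\varphi)$; integrating in $\theta$ and applying Fubini reduce matters to showing $\int P(re^{i\theta},e^{i\varphi})\,dm_\lambda(\theta)=1$, which I would verify via (\ref{Poisson-kernel-2-1}) by noting $\int\phi_n(e^{i\theta})\,dm_\lambda(\theta)=\delta_{n,0}$ and that $e^{i\theta}\phi_m(e^{i\theta})=\overline{e^{-i\theta}\phi_m(e^{-i\theta})}$ is the conjugate of a basis element orthogonal to $\phi_0$, so $\int e^{i\theta}\phi_m(e^{i\theta})\,dm_\lambda(\theta)=0$ for every $m\ge 0$. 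The $L^\infty$ and $C(\partial\DD)$ cases follow at once from the pointwise estimate.

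Finally for (v), the standard approximate-identity recipe applies: approximate $f\in L_\lambda^p(\partial\DD)$ by $g\in C(\partial\DD)$, use (iv) to control the tail, and verify uniform convergence $P_r(g;\cdot)\to g$ on $\partial\DD$ for continuous $g$ by the concentration $\int_{|\varphi-\theta|\ge\delta}P(re^{i\theta},e^{i\varphi})\,dm_\lambda(\varphi)\to 0$ as $r\to 1^-$, which comes from the $(1-|z|^2)$ factor in $P$ together with the explicit form of $P_0$. The main obstacle I expect is (iii): the hypergeometric form (\ref{Poi-0}) obscures the sign, and one has to unravel the Dunkl intertwining structure to obtain a manifestly positive representation of $P$. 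Once nonnegativity is in hand, (iv) and (v) follow from standard harmonic-analysis arguments, modulo the kernel-orthogonality computation in the previous paragraph that secures $\int P(re^{i\theta},e^{i\varphi})\,dm_\lambda(\theta)=1$.
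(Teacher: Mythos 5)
The paper does not prove this proposition at all: it is imported verbatim from \cite[Propositions 2.4 and 2.5]{LL1}, so there is no in-paper argument to compare yours against. Judged on its own merits, your outline of (i), (ii), (iv), (v) and (vi) is the standard route and is sound: homogeneity $\phi_n(re^{i\theta})=r^n\phi_n(e^{i\theta})$ plus orthonormality gives the semigroup law on $L^2_\lambda(\partial\DD)$ and then on $L^1_\lambda(\partial\DD)$ by density and (iv); positivity together with $\int P(re^{i\theta},e^{i\varphi})\,dm_\lambda(\varphi)=\int P(re^{i\theta},e^{i\varphi})\,dm_\lambda(\theta)=1$ and Jensen gives (iv); and the approximate-identity argument for (v) goes through once one has a quantitative off-diagonal decay of the kernel such as Lemma \ref{Poisson-kernel-estimate-a} --- your appeal to ``the $(1-|z|^2)$ factor together with the explicit form of $P_0$'' is exactly what that lemma packages, so you should invoke it rather than re-derive it.

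The genuine gap is in (iii), which you yourself flag as the crux. The route you propose --- applying (\ref{A-operator-4}) term by term in (\ref{Poisson-kernel-2-1}) so as to exhibit $P(z,e^{i\varphi})$ as ``a positive weighted average of ordinary Poisson kernels'' on the segment joining $e^{i\varphi}$ to $e^{-i\varphi}$ --- does not work as stated. Substituting (\ref{A-operator-4}) into $C(z,w)$ and summing the resulting series produces kernels of the form $(1-uv)^{-2\lambda-1}$ (because $\epsilon_n^{-2}=(2\lambda+1)_n/n!$), not ordinary Cauchy or Poisson kernels, and the cross term $\bar zwC(w,z)$ does not visibly combine with $C(z,w)$ into anything positive; making an intertwining argument of this kind rigorous amounts to reproving Dunkl's product formula, which is machinery neither you nor the paper sets up. Meanwhile the positivity is immediate from the two hypergeometric representations already displayed in (\ref{Poi-0}): the arguments $A=4(\mathrm{Im}\,z)(\mathrm{Im}\,w)/|1-zw|^2$ and $\tilde A=-4(\mathrm{Im}\,z)(\mathrm{Im}\,w)/|1-z\bar w|^2$ have opposite signs and both are $<1$, since $1-A=|1-z\bar w|^2/|1-zw|^2$ and $1-\tilde A=|1-zw|^2/|1-z\bar w|^2$; whichever of the two is nonnegative puts you in a ${}_2F_1$ series with nonnegative parameters and argument in $[0,1)$, hence a sum of nonnegative terms bounded below by $1$, and the remaining prefactors in $P(z,w)$ are manifestly positive. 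You should replace your sketch of (iii) with this short observation.
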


The following theorem asserts the existence of boundary values of functions in the $\lambda$-Hardy spaces.

\begin{theorem} \label{Hardy-boundary-value-a} {\rm (\cite[Theorem 6.6]{LL1})}
Let $p\ge p_0$ and $f\in H_{\lambda}^p(\DD)$. Then for almost every $\theta\in[-\pi,\pi]$, $\lim f(r
e^{i\varphi})=f(e^{i\theta})$  exists as $r e^{i\varphi}$ approaches to the point
$e^{i\theta}$ nontangentially, and if $p_0<p<\infty$, then
\begin{align}\label{Hp-norm-convergence-1}
\lim_{r\rightarrow1-}\int_{-\pi}^{\pi}|f(r
e^{i\theta})-f(e^{i\theta})|^{p} dm_{\lambda}(\theta)=0
\end{align}
and $\|f\|_{H^{p}_{\lambda}}\asymp\left(\int_{-\pi}^{\pi}|f(e^{i\theta})|^{p}
dm_{\lambda}(\theta)\right)^{1/p}$.
%
\end{theorem}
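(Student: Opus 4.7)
My plan is to reduce the theorem to a nontangential Fatou theorem for $\lambda$-Poisson integrals of boundary data in $L^1_\lambda(\partial\DD)$ and in ${\frak B}_\lambda(\partial\DD)$, and then to bootstrap from the range $p\ge 1$ down to $p\ge p_0$ using the $\lambda$-harmonic majorization flagged in the excerpt. The key preliminary observation is that every $\lambda$-harmonic function is reproduced from its circular dilates through the $\lambda$-Poisson integral, so that Proposition~\ref{Poi-property-1} yields $P_\rho(f_r;\theta)=f(\rho re^{i\theta})$ for $f\in H^p_\lambda(\DD)$ and $f_r(e^{i\theta}):=f(re^{i\theta})$.

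For $p\ge 1$ I would first exploit weak-$\ast$ compactness on $\partial\DD$. When $1<p<\infty$, the family $\{f_r\}_{r<1}$ is bounded in $L^p_\lambda(\partial\DD)$ and some subsequence converges weakly to a function $g\in L^p_\lambda(\partial\DD)$; passing to the limit in $P_\rho(f_r;\theta)=f(\rho re^{i\theta})$ yields $f=P(g)$. For $p=1$ the same idea with weak-$\ast$ compactness of ${\frak B}_\lambda(\partial\DD)$ gives $f=P(d\nu)$. The remaining ingredient is a nontangential maximal inequality for $\lambda$-Poisson integrals: using the explicit form \eqref{Poi-0} of $P_0(z,w)$ one can dominate $P(z,e^{i\varphi})$ pointwise over cones at $e^{i\theta}$ by averages of $|d\nu|$ over intervals on $\partial\DD$ centered at $\theta$, reducing matters to the Hardy-Littlewood maximal theory on the homogeneous space $(\partial\DD,dm_\lambda)$. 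Approximation by $\lambda$-analytic polynomials, where boundary convergence is trivial by uniform continuity, then turns a weak-type $(1,1)$ bound into a.e.\ nontangential convergence, and for $1<p<\infty$ the strong $(p,p)$ bound plus dominated convergence gives the mean convergence \eqref{Hp-norm-convergence-1}.

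For $p_0\le p<1$ I would invoke the $\lambda$-harmonic majorization from \cite{LL1} to obtain a non-negative $\lambda$-harmonic $u\ge|f|^p$ on $\DD$ with $\sup_r M_1(u;r)\lesssim\|f\|_{H^p_\lambda}^p$. Applying the $p=1$ case to $u$ produces $u=P(d\mu)$ with $d\mu\ge 0$ and an a.e.\ nontangential limit for $u$; in particular $|f|$ is nontangentially bounded off a null set. The delicate point is upgrading this to a genuine nontangential limit of $f$. My plan is to work directly with the series \eqref{anal-series-1-1} and the closed form \eqref{A-operator-4} for $\phi_n$, which together represent $f(z)$ as an integral of the form $\int_0^1 F(sz+(1-s)\bar z)(1-s)^{\lambda-1}s^\lambda ds$ with $F$ an ordinary holomorphic function built from the coefficients $\{c_n\}$, and then to transport Hardy's classical Tauberian theorem to each ray in the integrand once $|f|$ is known to be pointwise bounded nontangentially at $e^{i\theta}$. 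Once a.e.\ nontangential limits $f(e^{i\theta})$ exist, Fatou's lemma yields $\int|f(e^{i\theta})|^p\,dm_\lambda\le\|f\|_{H^p_\lambda}^p$, and for $p>p_0$ the reverse inequality and the mean convergence \eqref{Hp-norm-convergence-1} follow from Vitali's theorem with uniform integrability supplied by the majorant $u$.

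The main obstacle will be the Tauberian step in the range $p_0\le p<1$. Classical $H^p$ theory bypasses this difficulty by extracting Blaschke products, but here products and quotients of $\lambda$-analytic functions are generally no longer $\lambda$-analytic, as the authors themselves emphasize, so one is forced to work with the raw series $\sum c_n\phi_n$ and the precise asymptotics of $P_0(z,w)$ from \eqref{Poi-0}. Controlling convergence uniformly over nontangential approach regions using only the pointwise bound $|f|^p\le u$ is where I expect the technical weight of the proof to concentrate.
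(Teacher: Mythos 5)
First, a remark on scope: the paper offers no proof of this statement at all --- it is quoted verbatim from \cite[Theorem 6.6]{LL1} --- so your proposal can only be measured against the argument in that reference, which follows the Stein--Weiss/Muckenhoupt--Stein template. Your outline for $p\ge 1$ (weak-$*$ compactness to write $f=P(g)$ or $f=P(d\nu)$, a nontangential maximal inequality on the doubling space $(\partial\DD,dm_{\lambda})$, density of $\lambda$-analytic polynomials to upgrade a weak-type bound to a.e.\ convergence) is sound and is essentially the standard route.

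The genuine gap is in the range $p_0\le p<1$, precisely where you yourself locate the ``technical weight''. Harmonic majorization gives $|f|^{p_0}\le P(d\mu;\cdot)$ and hence nontangential boundedness of $f$ off a null set, but your proposed mechanism for converting boundedness into existence of limits does not work. The representation coming from (\ref{A-operator-4}) writes $f(z)$ as an average over $s\in(0,1)$ of $F(sz+(1-s)\bar z)$ where $F(w)=\sum_n c_n\epsilon_n^{-1}w^n$; since $\epsilon_n^{-1}\asymp(n+1)^{\lambda}$, the function $F$ is in effect a fractional derivative of order $\lambda$ of the ``holomorphic part'' of $f$, and membership of $f$ in $H^p_{\lambda}(\DD)$ gives no pointwise or boundary control on $F$ whatsoever. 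Moreover, nontangential boundedness of $f$ at $e^{i\theta}$ is a statement about the $s$-average, not about the individual chords $s\mapsto sz+(1-s)\bar z$, so there is nothing to feed into Hardy's Tauberian theorem (which in any case requires much more than boundedness). The missing idea is a local Fatou theorem for $\lambda$-harmonic functions: if $u$ is $\lambda$-harmonic in $\DD$ and nontangentially bounded at each point of a measurable set $E\subset\partial\DD$, then $u$ has nontangential limits at almost every point of $E$. Applied to $\mathrm{Re}\,f$ and $\mathrm{Im}\,f$ (which are $\lambda$-harmonic because $\Delta_{\lambda}f=4D_zD_{\bar z}f=0$) on the set where the majorant is nontangentially bounded, this yields the claim; this is exactly how \cite{SW1} and \cite{MS} treat the sub-$L^1$ range, since the Blaschke-product device is unavailable, and proving such a local Fatou theorem in the Dunkl setting is the real content you would need to supply. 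Your endgame is fine once a.e.\ convergence is in hand: Fatou's lemma for one inequality, generalized dominated convergence with the majorant $(P_rg)^{p/p_0}$ for (\ref{Hp-norm-convergence-1}), and the quasi-monotonicity of $r\mapsto M_p(f;r)$ for the reverse inequality.
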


By \cite[Theorems 6.6 and 6.8]{LL1}, we have

\begin{theorem} \label{Hp-Lp-thm-1}
Let $1\le p\le\infty$. If $f\in H_{\lambda}^p(\DD)$, then its boundary value $f(e^{i\theta})$ is in $L_{\lambda}^{p}(\partial\DD)$, and $f$ can be recovered from $f(e^{i\theta})$ by the $\lambda$-Poisson integral, namely,
\begin{align} \label{Poisson-1}
f(z)=\int_{-\pi}^{\pi}f(e^{i\varphi})P(z,e^{i\varphi})
dm_{\lambda}(\varphi),\qquad z\in\DD.
\end{align}
Moreover, $\|f\|_{H_{\lambda}^p}=\|f\|_{L_{\lambda}^{p}(\partial\DD)}$, and $f(e^{i\theta})$ satisfies the condition
\begin{align}\label{coefficient-2-2}
\int_{-\pi}^{\pi}f(e^{i\varphi})e^{i\varphi}\phi_{n-1}(e^{i\varphi})dm_{\lambda}(\varphi)=0,\qquad n=1,2,\dots,
\end{align}
and if $f$ has the expansion $f(z)=\sum_{n=0}^{\infty}c_{n}\phi_{n}(z)$, then
\begin{align}\label{coefficient-2-3}
c_n=\int_{-\pi}^{\pi}f(e^{i\varphi})\overline{\phi_{n}(e^{i\varphi})}
dm_{\lambda}(\varphi),\qquad n=0,1,\cdots.
\end{align}
Conversely, the $\lambda$-Poisson integral of a function $f$ in $L_{\lambda}^{p}(\partial\DD)$ satisfying (\ref{coefficient-2-2})
is an element in $H_{\lambda}^p(\DD)$.
\end{theorem}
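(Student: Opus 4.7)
The plan is to establish, for each fixed $0<\rho<1$ and $z\in\DD$, the key identity
\begin{equation*}
f(\rho z)=\int_{-\pi}^{\pi}f(\rho e^{i\varphi})\,P(z,e^{i\varphi})\,dm_{\lambda}(\varphi),
\end{equation*}
and then to send $\rho\to 1^{-}$. To derive it, I would expand $f$ via Proposition \ref{anal-thm} as $f(z)=\sum_{n\ge 0}c_{n}\phi_{n}(z)$ and exploit the evident homogeneity $\phi_{n}(\rho e^{i\theta})=\rho^{n}\phi_{n}(e^{i\theta})$ visible in Proposition \ref{orthonirmal-basis-a}; the dilated boundary function $f_{\rho}(\varphi):=f(\rho e^{i\varphi})$ then has the uniformly convergent series $\sum c_{n}\rho^{n}\phi_{n}(e^{i\varphi})$. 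Substituting into the kernel expansion \eqref{Cauchy-kernel-2-1}--\eqref{Poisson-kernel-2-1} and invoking the orthonormality from Proposition \ref{orthonirmal-basis-a} identifies the integral as $\sum c_{n}\rho^{n}\phi_{n}(z)=f(\rho z)$. The same orthonormality yields, for $0<\rho<1$, the auxiliary identities $\int f_{\rho}\,\overline{\phi_{n}(e^{i\varphi})}\,dm_{\lambda}=c_{n}\rho^{n}$ and $\int f_{\rho}\,e^{i\varphi}\phi_{n-1}(e^{i\varphi})\,dm_{\lambda}=0$, the latter because $f$'s expansion lies entirely in the first family of the orthonormal basis of Proposition \ref{orthonirmal-basis-a}.

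Next I would pass to the limit $\rho\to 1^{-}$. Since $p_{0}<1$, Theorem \ref{Hardy-boundary-value-a} supplies, for $1\le p<\infty$, both the a.e.\ nontangential boundary limit $f(e^{i\theta})$ and the $L_{\lambda}^{p}(\partial\DD)$-norm convergence $f_{\rho}\to f(e^{i\cdot})$; in particular $f(e^{i\cdot})\in L_{\lambda}^{p}(\partial\DD)$. Because $P(z,\cdot)$ and $\phi_{n}$ are bounded on $\partial\DD$ for each fixed $z\in\DD$, H\"older's inequality lets me pass the limit inside each of the three integrals above, producing \eqref{Poisson-1}, \eqref{coefficient-2-2}, and \eqref{coefficient-2-3} in turn. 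The norm equality $\|f\|_{H_{\lambda}^{p}}=\|f(e^{i\cdot})\|_{L_{\lambda}^{p}(\partial\DD)}$ then follows from the monotonicity of $r\mapsto M_{p}(f;r)$ for $p\ge p_{0}$ (a consequence of the $\lambda$-harmonic majorization in \cite[Theorem 6.3]{LL1}), which gives $\sup_{r}M_{p}(f;r)=\lim_{r\to 1^{-}}M_{p}(f;r)$. For $p=\infty$, the identification $H_{\lambda}^{\infty}=A_{\lambda}^{\infty}$ together with the pointwise bound $|f_{\rho}|\le\|f\|_{H_{\lambda}^{\infty}}$ and the a.e.\ boundary limit handles all three identities and the norm equality by dominated convergence.

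For the converse, given $g\in L_{\lambda}^{p}(\partial\DD)$ satisfying \eqref{coefficient-2-2}, I would set $F(z)=P(g;z)$. By Proposition \ref{Poi-property-1}(i), $F$ is $\lambda$-harmonic, so it admits an expansion $F(z)=\sum_{n\ge 0}a_{n}\phi_{n}(z)+\sum_{n\ge 1}b_{n}\bar z\overline{\phi_{n-1}(z)}$ with coefficients given by the boundary integral formulas recalled in Section 2. The hypothesis \eqref{coefficient-2-2} forces $b_{n}=0$ for all $n\ge 1$, so $F$ reduces to $\sum a_{n}\phi_{n}(z)$ and is therefore $\lambda$-analytic. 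Proposition \ref{Poi-property-1}(iv) finally bounds $M_{p}(F;r)\le\|g\|_{L_{\lambda}^{p}(\partial\DD)}$ uniformly in $r$, so $F\in H_{\lambda}^{p}(\DD)$.

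The main technical obstacle sits at the endpoint $p=1$, where weak compactness of $\{f_{\rho}\}$ alone would yield only a representing Borel measure in ${\frak B}_{\lambda}(\partial\DD)$ rather than a genuine $L^{1}$ density; it is precisely the $L^{1}$-norm convergence in Theorem \ref{Hardy-boundary-value-a} (available because $p_{0}<1$) that rules out a singular boundary part and identifies the representing measure as $f(e^{i\theta})\,dm_{\lambda}(\theta)$. A parallel subtlety appears at $p=\infty$, where norm convergence fails and one must rely on pointwise dominated convergence together with the identification of $H_{\lambda}^{\infty}$ with $A_{\lambda}^{\infty}$.
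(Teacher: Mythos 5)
Your argument is correct, but note that the paper does not actually prove this theorem: it is stated as an immediate consequence of Theorems 6.6 and 6.8 of \cite{LL1}, so there is no internal proof to compare against. What you have written is a self-contained reconstruction from the ingredients the paper recalls: the homogeneity $\phi_n(\rho z)=\rho^n\phi_n(z)$ visible in Proposition \ref{orthonirmal-basis-a}, the orthonormality of the system there, the absolutely and uniformly convergent expansions (\ref{Cauchy-kernel-2-1})--(\ref{Poisson-kernel-2-1}), the coefficient formulas of Proposition \ref{anal-thm}(iii), the $L_{\lambda}^{p}(\partial\DD)$-norm convergence of Theorem \ref{Hardy-boundary-value-a} (available for all $1\le p<\infty$ since $p_0<1$), and Proposition \ref{Poi-property-1}. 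All steps check out, including the converse, where (\ref{coefficient-2-2}) annihilates the second family of coefficients in the $\lambda$-harmonic expansion of $P(g;\cdot)$ so that the Poisson integral is $\lambda$-analytic, and Proposition \ref{Poi-property-1}(iv) gives the uniform bound on the $p$-means. Two small points worth tightening: first, for $1\le p<\infty$ the monotonicity of $r\mapsto M_p(f;r)$ follows directly from Lemma \ref{monotonicity-integral-mean-2-a}, since a $\lambda$-analytic function is $\lambda$-harmonic, so you need not route through the harmonic majorization; second, at $p=\infty$ dominated convergence only yields $\|f(e^{i\cdot})\|_{L^{\infty}(\partial\DD)}\le\|f\|_{H^{\infty}_{\lambda}}$, and the reverse inequality requires the representation (\ref{Poisson-1}) combined with the positivity of the $\lambda$-Poisson kernel and the normalization $\|P(z,\cdot)\|_{L_{\lambda}^{1}(\partial\DD)}=1$, which you use implicitly but should state.
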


For the Hardy spaces on the upper half plane $\RR_+^2=\RR\times(0,\infty)$ associated to the Dunkl operators $D_{z}$ and $D_{\bar{z}}$, see \cite{LL2}.

\section{Preliminaries to the $\lambda$-Bergman spaces}

We define the function $K_{\lambda}(z,w)$ by
\begin{eqnarray*}
K_{\lambda}(z,w)=\sum_{n=0}^{\infty}
\frac{n+\lambda+1}{\lambda+1}\phi_{n}(z)\overline{\phi_{n}(w)}.
\end{eqnarray*}
By (\ref{phi-bound-1}), the series in $K_{\lambda}(z,w)$ is convergent absolutely for $zw\in\DD$ and uniformly for $zw$ in a compact subset of $\DD$. Moreover we have

\begin{proposition} \label{reproducing-Bergman-a}
For fixed $w\in\overline{\DD}$ the function $z\mapsto K_{\lambda}(z,w)$ is $\lambda$-analytic in $\DD$, and the function $w\mapsto K_{\lambda}(z,w)$ reproduces all functions $f\in A^{1}_{\lambda}(\DD)$, that is,
\begin{eqnarray}\label{reproducing-Bergman-1}
f(z)=\int_{\DD}f(w)K_{\lambda}(z,w)d\sigma_{\lambda}(w),\qquad z\in\DD.
\end{eqnarray}
Conversely, if $f\in L_{\lambda}^{1}(\DD)$ satisfies (\ref{reproducing-Bergman-1}), then $f$ is $\lambda$-analytic in $\DD$, and in particular $f\in A^{1}_{\lambda}(\DD)$.
\end{proposition}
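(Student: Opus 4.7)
The plan is to prove the three claims in turn: $\lambda$-analyticity of $z\mapsto K_\lambda(z,w)$, the reproducing formula on $A^1_\lambda(\DD)$, and the converse. The key fact underpinning everything is the orthogonality identity
\begin{align*}
\int_\DD \phi_m(w)\overline{\phi_n(w)}\,d\sigma_\lambda(w)=\frac{\lambda+1}{n+\lambda+1}\,\delta_{mn},
\end{align*}
which I would establish first by passing to polar coordinates $w=\rho e^{i\varphi}$, using homogeneity $\phi_n(\rho e^{i\varphi})=\rho^n\phi_n(e^{i\varphi})$ together with the orthonormality of $\{\phi_n(e^{i\varphi})\}$ in $L^2_\lambda(\partial\DD)$ from Proposition \ref{orthonirmal-basis-a}, and evaluating the radial integral $\int_0^1\rho^{2n+2\lambda+1}d\rho=1/(2n+2\lambda+2)$ with the constants $c_\lambda/\tilde c_\lambda=2\lambda+2$ giving the stated ratio.

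For the $\lambda$-analyticity of $z\mapsto K_\lambda(z,w)$, I would invoke the bound (\ref{phi-bound-1}) to show the defining series and any term-by-term derivative $D_{\bar z}$ converges absolutely and uniformly on compact subsets of $\DD$ (the dominating tail is $\sum(n+1)^{2\lambda+1}|z|^n|w|^n$, finite for $|zw|<1$). Since $D_{\bar z}\phi_n=0$ for every $n$, this justifies $D_{\bar z} K_\lambda(\cdot,w)\equiv 0$.

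For the reproducing identity, given $f\in A^1_\lambda(\DD)$ write $f(z)=\sum_{n\ge0}c_n\phi_n(z)$ as in Proposition \ref{anal-thm}. Fix $z\in\DD$. Because the series defining $K_\lambda(z,\cdot)$ is dominated on $\overline{\DD}$ by a convergent numerical series, $|K_\lambda(z,w)|\le C_z$ for all $w\in\DD$, so $\int_\DD|f(w)|\,|K_\lambda(z,w)|\,d\sigma_\lambda(w)\le C_z\|f\|_{A^1_\lambda}$ and I may exchange sum and integral to get
\begin{align*}
\int_\DD f(w)K_\lambda(z,w)\,d\sigma_\lambda(w)=\sum_{n=0}^\infty\frac{n+\lambda+1}{\lambda+1}\phi_n(z)\int_\DD f(w)\overline{\phi_n(w)}\,d\sigma_\lambda(w).
\end{align*}
To evaluate the inner integral I pass to polar coordinates, plug in $f(\rho e^{i\varphi})=\sum_m c_m\rho^m\phi_m(e^{i\varphi})$ (whose uniform convergence in $\varphi$ for each $\rho<1$ comes from the growth condition on $\{c_n\}$ in Proposition \ref{anal-thm}), exchange sum and integral on the circle, and apply the $L^2_\lambda(\partial\DD)$-orthonormality to collapse to $c_n\rho^n$. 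The radial integral then yields $\int_\DD f\overline{\phi_n}\,d\sigma_\lambda=c_n(\lambda+1)/(n+\lambda+1)$, whence the $n$-th term reduces to $c_n\phi_n(z)$ and the sum reconstructs $f(z)$.

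For the converse, given $f\in L^1_\lambda(\DD)$ satisfying (\ref{reproducing-Bergman-1}), I would differentiate under the integral sign: on any disk $|z|\le\rho<1$ the series for $K_\lambda(z,w)$ and for $D_{\bar z} K_\lambda(z,w)=0$ converges uniformly in $w\in\overline{\DD}$, which together with $f\in L^1_\lambda(\DD)$ legitimizes the interchange and yields $D_{\bar z}f=0$ on $\DD$; since $f$ is then $\lambda$-analytic and already in $L^1_\lambda(\DD)$, it lies in $A^1_\lambda(\DD)$.

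The main obstacle is the double interchange of summation and integration in Step 3 (first exchanging the series defining $K_\lambda$ with the $\DD$-integral, then exchanging the series for $f$ with the $\partial\DD$-integral after polar decomposition); both are handled by the growth estimate (\ref{phi-bound-1}) and the rapid-decay condition on the expansion coefficients from Proposition \ref{anal-thm}, but they must be sequenced carefully because the series for $f$ need not converge in $L^1_\lambda$.
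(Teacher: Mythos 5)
Your proposal is correct and follows essentially the same route as the paper: both arguments hinge on the coefficient identity $\int_{\DD}f\,\overline{\phi_{n}}\,d\sigma_{\lambda}=\frac{\lambda+1}{n+\lambda+1}c_{n}$ and on justifying termwise integration/differentiation via the polynomial growth bounds on $\phi_{n}$ and the rapid decay of the coefficients from Proposition \ref{anal-thm}(iii). The only (cosmetic) differences are that you compute the coefficient integral by polar coordinates and circle orthonormality where the paper dilates to $f(rw)$, changes variables, and lets $r\to1-$, and that for termwise $\partial_{\bar z}$-differentiation you should invoke the derivative bound (\ref{phi-bound-2}) rather than (\ref{phi-bound-1}) alone.
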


\begin{proof}
It is easy to see, from (\ref{A-operator-4}), that
\begin{align}\label{phi-bound-2}
\left|\partial_{\bar{z}}\phi_n(z)\right|\le\epsilon_n^{-1}n|z|^{n-1}\asymp n^{\lambda+1}|z|^{n-1},
\end{align}
which shows that taking termwise differentiation $\partial_{\bar{z}}$ in $\DD$ to $K_{\lambda}(z,w)$ is legitimate. Therefore $D_{\bar{z}}K_{\lambda}(z,w)=0$ since $D_{\bar{z}}\phi_n(z)=0$.

For $f\in A^{1}_{\lambda}(\DD)$ and $0<r<1$, from (\ref{anal-series-1}) we have $f(rw)=\sum_{k=0}^{\infty}c_{k}r^k\phi_{k}(w)$ ($|w|<1$).
In view of (\ref{phi-bound-1}),
the last assertion in Proposition \ref{anal-thm}(iii) implies that termwise integration of $\overline{\phi_{n}(w)}f(rw)$ over $\DD$ is legitimate, and since, by Proposition \ref{orthonirmal-basis-a},
\begin{align}\label{Bergman-normalization-1}
\int_{\DD}\left|\phi_{n}(w)\right|^2\,d\sigma_{\lambda}(w)
=\frac{\lambda+1}{n+\lambda+1},
\end{align}
it follows that
$\int_{\DD}\overline{\phi_{n}(w)}f(rw)\,d\sigma_{\lambda}(w)=\frac{\lambda+1}{n+\lambda+1}r^nc_n$.
Making change of variables $w\mapsto w/r$, one has
$$
\int_{\DD_r}\overline{\phi_{n}(w)}f(w)\,d\sigma_{\lambda}(w)=\frac{\lambda+1}{n+\lambda+1}r^{2n+2\lambda+2}c_n,
$$
where $\DD_r=\{w:\,|w|<r\}$, and then, letting $r\rightarrow1-$ yields
\begin{align}\label{coefficient-2-1}
\int_{\DD}\overline{\phi_{n}(w)}f(w)\,d\sigma_{\lambda}(w)=\frac{\lambda+1}{n+\lambda+1}c_n.
\end{align}

Again by (\ref{phi-bound-1}), $\sum_{n=0}^{\infty}(n+\lambda+1)|\phi_{n}(z)\overline{\phi_{n}(w)}|\lesssim\sum_{n=0}^{\infty}(n+1)^{2\lambda+1}|z|^n$ for all $w\in\overline{\DD}$, which implies that termwise integration of
$f(w)K_{\lambda}(z,w)$ over $\DD$ is also legitimate. This, together with (\ref{coefficient-2-1}), proves (\ref{reproducing-Bergman-1}).

The final assertion in the proposition is verified by the same procedure as above.
\end{proof}

The function $K_{\lambda}(z,w)$ is called the $\lambda$-Bergman kernel on the disk $\DD$.

\begin{corollary}\label{reproducing-Bergman-b}
If $f$ is $\lambda$-analytic in $\DD$, then for $r\in(0,1)$,
\begin{eqnarray}\label{reproducing-Bergman-2}
f(z)=r^{-2\lambda-2}\int_{\DD_r}f(w)K_{\lambda}(z/r,w/r)d\sigma_{\lambda}(w),\qquad z\in\DD_r,
\end{eqnarray}
where $\DD_r=\{z:\,|z|<r\}$.
Conversely, if $f$ is locally integrable on $\DD$ associated with the measure $d\sigma_{\lambda}$ and satisfies (\ref{reproducing-Bergman-2}) for all $r\in(0,1)$, then $f$ is $\lambda$-analytic in $\DD$.
\end{corollary}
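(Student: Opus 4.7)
The plan is to deduce both directions from Proposition \ref{reproducing-Bergman-a} by a scaling argument, exploiting the fact that $D_{\bar z}$ behaves simply under the real dilation $z\mapsto rz$ because this dilation commutes with the reflection $z\mapsto\bar z$.

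For the forward direction, fix $r\in(0,1)$ and set $g(\zeta)=f(r\zeta)$ for $\zeta\in\DD$. A direct chain-rule computation gives
$$
D_{\bar\zeta}g(\zeta)=r\,\partial_{\bar z}f(r\zeta)-\frac{\lambda(f(r\zeta)-f(r\bar\zeta))}{\zeta-\bar\zeta}=r\,(D_{\bar z}f)(r\zeta)=0,
$$
so $g$ is $\lambda$-analytic in $\DD$; continuity of $f$ on the compact set $\overline{\DD_r}\subset\DD$ moreover shows that $g\in A^1_\lambda(\DD)$. Applying Proposition \ref{reproducing-Bergman-a} to $g$ yields the identity $f(r\zeta)=\int_\DD f(rw)K_\lambda(\zeta,w)\,d\sigma_\lambda(w)$. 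The change of variables $u=rw$, for which $d\sigma_\lambda(u)=r^{2\lambda+2}d\sigma_\lambda(w)$ because of the weight $|y|^{2\lambda}$, together with the substitution $z=r\zeta$, then produces (\ref{reproducing-Bergman-2}).

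For the converse, fix $z\in\DD$ and choose $r$ with $|z|<r<1$. Applying the preceding scaling computation in reverse, with inner variable $\zeta=z/r$, shows $D_{\bar z}[K_\lambda(z/r,w/r)]=r^{-1}(D_{\bar\zeta}K_\lambda)(z/r,w/r)=0$ for each $w\in\DD_r$, thanks to the $\lambda$-analyticity of $K_\lambda(\cdot,w/r)$ from Proposition \ref{reproducing-Bergman-a}. The bounds (\ref{phi-bound-1}) and (\ref{phi-bound-2}) imply that for any $r_1<r$ the series for $K_\lambda(z/r,w/r)$ and its termwise $\partial_z$ and $\partial_{\bar z}$ derivatives converge absolutely and uniformly on $\overline{\DD_{r_1}}\times\overline{\DD_r}$; combined with the local $d\sigma_\lambda$-integrability of $f$, this legitimizes differentiation of the right-hand side of (\ref{reproducing-Bergman-2}) under the integral sign. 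Consequently $f$ is $\lambda$-analytic on $\DD_r$, and since $r\in(0,1)$ is arbitrary, on all of $\DD$.

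The content of the argument is concentrated in the transformation rule $D_{\bar\zeta}[f(r\zeta)]=r(D_{\bar z}f)(r\zeta)$ and the correct bookkeeping of the weight factor $r^{2\lambda+2}$ in the change of variables; the routine but essential step is the uniform-convergence estimate using (\ref{phi-bound-1})--(\ref{phi-bound-2}) that allows differentiation under the integral in the converse direction. Once these are in place, the corollary reduces cleanly to Proposition \ref{reproducing-Bergman-a}.
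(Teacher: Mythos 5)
Your argument is correct, and the forward direction coincides with the paper's: both apply Proposition \ref{reproducing-Bergman-a} to the dilate $\zeta\mapsto f(r\zeta)$ and change variables, the weight $|y|^{2\lambda}$ accounting for the factor $r^{-2\lambda-2}$; you merely make explicit the (easy) facts that the dilate is $\lambda$-analytic and lies in $A^{1}_{\lambda}(\DD)$. For the converse the paper simply undoes the same change of variables and invokes the final assertion of Proposition \ref{reproducing-Bergman-a} applied to $f(r\cdot)\in L_{\lambda}^{1}(\DD)$ --- which is in turn established by termwise integration against the kernel series and the series characterization of $\lambda$-analyticity in Proposition \ref{anal-thm}(iii) --- whereas you differentiate under the integral sign and use $D_{\bar{z}}K_{\lambda}(\cdot,w)=0$ directly. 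Both routes are valid and rest on the same estimates (\ref{phi-bound-1})--(\ref{phi-bound-2}); the paper's reduction is shorter because the needed converse is already recorded in Proposition \ref{reproducing-Bergman-a}, while yours has the small advantage of exhibiting explicitly that $f$ is smooth on each $\DD_r$, which the definition of $\lambda$-analyticity ($C^2$ together with $D_{\bar{z}}f=0$) requires.
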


\begin{proof}
For $z\in\DD_r$, set $z'=z/r$. Applying (\ref{reproducing-Bergman-1}) to the function $z'\mapsto f(rz')$ one has
$$
f(rz')=\int_{\DD}f(rw)K_{\lambda}(z',w)d\sigma_{\lambda}(w),
$$
and then, making change of variables $w\mapsto w'/r$ yields (\ref{reproducing-Bergman-2}). The second part of the corollary follows from (\ref{reproducing-Bergman-2}) and the final assertion in Proposition \ref{reproducing-Bergman-a}.
\end{proof}

We define the operator $P_{\lambda}$ by
\begin{align}\label{Bergman-projection-1}
(P_{\lambda}f)(z)=\int_{\DD}f(w)K_{\lambda}(z,w)d\sigma_{\lambda}(w),\qquad z\in\DD.
\end{align}

\begin{proposition} \label{reproducing-Bergman-c}
For $f\in L_{\lambda}^{1}(\DD)$, the integral on the right hand side of (\ref{Bergman-projection-1}) is well defined for $z\in\DD$ and defines a $\lambda$-analytic function in $\DD$.
\end{proposition}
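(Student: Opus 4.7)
The plan is to prove two things: that the integral defining $(P_\lambda f)(z)$ converges absolutely for every $z\in\DD$, and that the resulting function is $\lambda$-analytic. Both will follow from the uniform bounds on $\phi_n$ already available.

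First I would establish the well-definedness. Fix any $\rho\in(0,1)$. For $z$ with $|z|\le\rho$ and arbitrary $w\in\DD$, the estimate (\ref{phi-bound-1}) applied to both $\phi_n(z)$ and $\phi_n(w)$ gives
\begin{eqnarray*}
|K_{\lambda}(z,w)|\lesssim\sum_{n=0}^{\infty}(n+\lambda+1)(n+1)^{2\lambda}\rho^{n}=:M_{\rho}<\infty.
\end{eqnarray*}
Hence $|f(w)K_{\lambda}(z,w)|\le M_{\rho}|f(w)|$, which is integrable over $\DD$ since $f\in L_{\lambda}^{1}(\DD)$. In particular $(P_{\lambda}f)(z)$ is defined for every $z\in\DD$ and satisfies $|(P_{\lambda}f)(z)|\le M_{\rho}\|f\|_{L_{\lambda}^{1}(\DD)}$ on $\{|z|\le\rho\}$.

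Next I would verify $\lambda$-analyticity by expanding the kernel into its defining series and interchanging sum and integral. Because $|\phi_n(w)|\lesssim(n+1)^{\lambda}$ uniformly for $w\in\overline{\DD}$, the partial sums of $\sum_{n}\frac{n+\lambda+1}{\lambda+1}\phi_n(z)\overline{\phi_n(w)}f(w)$ are dominated (for $|z|\le\rho$) by $M_{\rho}|f(w)|\in L^{1}(d\sigma_{\lambda})$, so dominated convergence permits termwise integration and yields
\begin{eqnarray*}
(P_{\lambda}f)(z)=\sum_{n=0}^{\infty}a_{n}\phi_{n}(z),\qquad a_{n}:=\frac{n+\lambda+1}{\lambda+1}\int_{\DD}f(w)\overline{\phi_{n}(w)}\,d\sigma_{\lambda}(w).
\end{eqnarray*}
Again by (\ref{phi-bound-1}), $|a_{n}|\lesssim(n+1)^{\lambda+1}\|f\|_{L_{\lambda}^{1}(\DD)}$, so for every real $\gamma$ and every $r\in[0,1)$ the series $\sum_{n\ge1}n^{\gamma}|a_{n}|r^{n}$ converges (uniformly on closed sub-intervals of $[0,1)$). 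By the equivalence (i)$\Leftrightarrow$(iii) in Proposition \ref{anal-thm}, $P_{\lambda}f$ is $\lambda$-analytic in $\DD$.

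The only real technical point is the interchange of sum and integral, which is handled cleanly by the uniform bound $M_{\rho}$ on compact subdisks; everything else is bookkeeping. As an alternative, one could instead differentiate under the integral sign using (\ref{phi-bound-2}) (together with the analogous bound on the reflection difference quotient $(\phi_n(z)-\phi_n(\bar z))/(z-\bar z)$) to move $D_{\bar z}$ inside and apply $D_{\bar z}K_{\lambda}(z,w)=0$ from Proposition \ref{reproducing-Bergman-a}; but the series route is more economical since it simultaneously produces the explicit expansion of $P_{\lambda}f$ that will be useful later.
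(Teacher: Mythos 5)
Your proof is correct and follows essentially the same route as the paper: both use the bound (\ref{phi-bound-1}) to justify termwise integration of $f(w)K_{\lambda}(z,w)$ against $d\sigma_{\lambda}(w)$, and then invoke Proposition \ref{anal-thm}(iii) to conclude that the resulting series $\sum_{n}a_{n}\phi_{n}(z)$ with polynomially bounded coefficients is $\lambda$-analytic. You have simply written out in full the details that the paper compresses into two lines.
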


Indeed as in the proof of Proposition \ref{reproducing-Bergman-a}, for fixed $z\in\DD$, by (\ref{phi-bound-1}) termwise integration of
$f(w)K_{\lambda,\alpha}(z,w)$ over $\DD$ with respect to the measure $d\sigma_{\lambda}(w)$ is legitimate, and by Proposition \ref{anal-thm}(iii) the resulting series represents a $\lambda$-analytic function in $\DD$.

\begin{lemma} \label{Cauchy-estimate-a}  {\rm(\cite[Theorem 4.2]{LL1})}
For $|zw|<1$, we have
\begin{eqnarray*}
|C(z,w)|\lesssim \frac{|1-z\bar{w}|^{-1}}{\big(|1-zw|+|1-z\bar{w}|\big)^{2\lambda}}\ln\left(\frac{|1-z\bar{w}|^2}{|1-zw|^2}+2\right).
\end{eqnarray*}
\end{lemma}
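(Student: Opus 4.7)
The plan is to reduce the estimate to a bound on the hypergeometric factor $P_{0}(z,w)$ via the closed form $C(z,w) = P_{0}(z,w)/(1 - z\bar{w})$ from (\ref{Cauchy-kernel-2-2}), so the whole problem collapses to proving
\[
|P_{0}(z,w)| \lesssim \frac{1}{(|1-zw|+|1-z\bar w|)^{2\lambda}} \ln\!\left(\frac{|1-z\bar w|^{2}}{|1-zw|^{2}}+2\right).
\]
Write $A=|1-zw|^{2}$ and $B=|1-z\bar w|^{2}$. A direct calculation with $z=x+iy$, $w=u+iv$ gives the key identity $A - 4(\operatorname{Im}z)(\operatorname{Im}w) = B$, which is what makes the hypergeometric argument in (\ref{Poi-0}) tractable: $4(\operatorname{Im}z)(\operatorname{Im}w)/A = 1 - B/A \in(-\infty,1)$.

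Next, I would feed this into Euler's integral representation of the first form in (\ref{Poi-0}),
\[
{}_{2}F_{1}\!\left(\lambda,\lambda;2\lambda+1;t\right)
 = \frac{\Gamma(2\lambda+1)}{\Gamma(\lambda)\Gamma(\lambda+1)}\int_{0}^{1} s^{\lambda-1}(1-s)^{\lambda}(1-st)^{-\lambda}\,ds,
\]
with $t = 4(\operatorname{Im}z)(\operatorname{Im}w)/A$. Using the identity above, $(1-st)^{-\lambda} = A^{\lambda}\bigl((1-s)A + sB\bigr)^{-\lambda}$, so the $A^{-\lambda}$ prefactor in $P_{0}$ cancels and we obtain the symmetric representation
\[
P_{0}(z,w) = \frac{\Gamma(2\lambda+1)}{\Gamma(\lambda)\Gamma(\lambda+1)} \int_{0}^{1} \frac{s^{\lambda-1}(1-s)^{\lambda}}{\bigl((1-s)A + sB\bigr)^{\lambda}}\,ds.
\]
(The case $\lambda=0$ is trivial since then $P_{0}\equiv 1$, so I treat $\lambda>0$.) At this point the problem is purely analytic: estimate this one-variable integral by $\ln(B/A+2)/(A+B)^{\lambda}$, noting $A+B \asymp (\sqrt{A}+\sqrt{B})^{2}$.

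I would then split into two regimes. If $A\ge B$, the crude bound $(1-s)A+sB \ge (1-s)A$ gives the integral $\le A^{-\lambda}\int_{0}^{1} s^{\lambda-1}\,ds = 1/(\lambda A^{\lambda})$, and since $A\asymp A+B$ in this regime while $\ln(B/A+2)\ge\ln 2$, the claimed bound is immediate. The delicate case is $A<B$. Here I would split at $s_{0}=A/B$: on $[0,s_{0}]$ one has $sB\le A$ and hence $(1-s)A+sB\asymp A$, giving contribution $\lesssim A^{-\lambda}\int_{0}^{s_{0}} s^{\lambda-1}\,ds = s_{0}^{\lambda}/(\lambda A^{\lambda}) = 1/(\lambda B^{\lambda})$; on $[s_{0},1]$ one has $sB\ge A$ and so $(1-s)A+sB\asymp sB$, giving contribution $\lesssim B^{-\lambda}\int_{s_{0}}^{1} s^{-1}(1-s)^{\lambda}\,ds \lesssim B^{-\lambda}\ln(B/A)$. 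Adding these produces $\lesssim B^{-\lambda}\ln(B/A+2)$, which matches the target since $B\asymp A+B$ when $A<B$.

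Combining the two regimes yields the desired bound on $P_{0}(z,w)$, and dividing by $|1-z\bar w|$ completes the proof. The main obstacle is the extraction of the $\ln(B/A+2)$ factor in the regime $A<B$; this is where the endpoint Gauss summation $c-a-b=1$ in the first hypergeometric representation is sharp, and choosing the split point $s_{0}=A/B$ is what converts the integrable singularity into the expected logarithm. All other steps are routine, provided the identity $A-4(\operatorname{Im}z)(\operatorname{Im}w)=B$ is exploited to linearize the hypergeometric argument.
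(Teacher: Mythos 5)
Your proof is correct, but note that the paper itself offers no proof of this lemma at all --- it is quoted verbatim from \cite[Theorem 4.2]{LL1} --- so what you have produced is a self-contained derivation rather than a reconstruction of an argument in the text. Your route also differs from the one the authors use for the closely analogous kernel bounds (Lemma \ref{Bergman-kernel-a}): there they split into the two regimes $|1-z\bar w|^2\le 2|1-zw|^2$ and its complement, use a \emph{different} one of the two hypergeometric representations in (\ref{Poi-0}) in each regime, and invoke the endpoint asymptotics (\ref{Gauss-1}) for ${}_2F_1(a,b;a+b;t)$. You instead exploit the algebraic identity $|1-zw|^2-4(\operatorname{Im}z)(\operatorname{Im}w)=|1-z\bar w|^2$ to rewrite Euler's integral for ${}_2F_1(\lambda,\lambda;2\lambda+1;\cdot)$ as the single symmetric expression $\int_0^1 s^{\lambda-1}(1-s)^{\lambda}\bigl((1-s)A+sB\bigr)^{-\lambda}\,ds$ with $A=|1-zw|^2$, $B=|1-z\bar w|^2$, and then extract both the $(A+B)^{-\lambda}$ decay and the logarithm by splitting the $s$-integral at $s_0=A/B$. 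The details check out: the Euler representation applies since $t=1-B/A<1$ and $\lambda>0$ (the case $\lambda=0$ being trivial); on $[0,s_0]$ the bound $(1-s)A+sB\ge A$ (valid since $B\ge A$) gives the contribution $B^{-\lambda}/\lambda$, and on $[s_0,1]$ the bound $(1-s)A+sB\ge sB$ gives $B^{-\lambda}\ln(B/A)$, which together yield $B^{-\lambda}\ln(B/A+2)$ with $B\asymp A+B$ in that regime. What your approach buys is a unified treatment with no case distinction between the two forms of $P_0$ and no appeal to the logarithmic asymptotics (\ref{Gauss-1}); what the paper's style of argument buys is reusability for the differentiated kernels $r\frac{d}{dr}C(z,w)$, where the contiguous-function identities \cite[2.1(7), 2.1(23)]{Er} are needed anyway.
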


\begin{lemma}\label{Bergman-kernel-a}
For $|zw|<1$,
\begin{align}\label{Bergman-kernal-2}
|K_{\lambda}(z,w)|\lesssim \frac{|1-z\bar{w}|^{-1}}{\big(|1-zw|+|1-z\bar{w}|\big)^{2\lambda}}
\left(\frac{1}{|1-z\overline{w}|}+\frac{1}{|1-zw|}\right).
\end{align}
\end{lemma}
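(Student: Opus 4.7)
The plan is to reduce the Bergman kernel bound to the Cauchy kernel bound of Lemma~\ref{Cauchy-estimate-a} via a one-derivative identity, and then control the correction using the explicit form of $C$ from (\ref{Cauchy-kernel-2-2}). The integral representation (\ref{A-operator-4}) gives $\phi_n(rz)=r^n\phi_n(z)$ for real $r$, so $C(rz,w)=\sum_{n\ge0}r^n\phi_n(z)\overline{\phi_n(w)}$ when $|rzw|<1$; termwise differentiation (justified near $r=1$ by (\ref{phi-bound-1})) gives $\partial_rC(rz,w)|_{r=1}=\sum_{n\ge1}n\,\phi_n(z)\overline{\phi_n(w)}$. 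Writing $n+\lambda+1=(\lambda+1)+n$ in the series defining $K_\lambda$ produces the key identity
\begin{equation*}
K_\lambda(z,w)=C(z,w)+\frac{1}{\lambda+1}\,\partial_rC(rz,w)|_{r=1}.
\end{equation*}

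Substituting $C(z,w)=P_0(z,w)/(1-z\bar w)$ from (\ref{Cauchy-kernel-2-2}) and setting $Q(z,w):=\partial_rP_0(rz,w)|_{r=1}$, the quotient rule yields
\begin{equation*}
K_\lambda(z,w)=C(z,w)+\frac{1}{\lambda+1}\,\frac{z\bar w\,P_0(z,w)}{(1-z\bar w)^2}+\frac{1}{\lambda+1}\,\frac{Q(z,w)}{1-z\bar w},
\end{equation*}
so the proof reduces to pointwise bounds on $|C|$, $|P_0|$, and $|Q|$. For $|C|$ I would directly invoke Lemma~\ref{Cauchy-estimate-a} and observe that its log factor $\log(|1-z\bar w|^2/|1-zw|^2+2)$ is $\lesssim|1-z\bar w|^{-1}+|1-zw|^{-1}$ (using $\log(1/x)\le1/x$ for $x\in(0,1]$), whence $|C|$ is absorbed into the right-hand side of (\ref{Bergman-kernal-2}). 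The same estimate, via the identity $P_0=(1-z\bar w)\,C$, gives $|P_0|\lesssim(|1-zw|+|1-z\bar w|)^{-2\lambda}\log(|1-z\bar w|^2/|1-zw|^2+2)$, and the second summand is then absorbed into the $|1-z\bar w|^{-2}$ piece of (\ref{Bergman-kernal-2}) by the same argument.

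The main technical obstacle is the derivative bound
\begin{equation*}
|Q(z,w)|\lesssim\bigl(|1-zw|+|1-z\bar w|\bigr)^{-2\lambda}\bigl(|1-zw|^{-1}+|1-z\bar w|^{-1}\bigr),
\end{equation*}
which delivers the remaining piece of (\ref{Bergman-kernal-2}). To obtain it I would differentiate (\ref{Poi-0}) directly. The derivative of the prefactor $|1-rz\bar w|^{-2\lambda}$ contributes one extra inverse power of $|1-z\bar w|$ via the elementary bound that the modulus of $\partial_r|1-rz\bar w|^2$ at $r=1$ is at most $2|1-z\bar w|$. The derivative of the hypergeometric argument is handled via the identity $\frac{d}{du}{}_2F_1(a,b;c;u)=(ab/c)\,{}_2F_1(a+1,b+1;c+1;u)$, the classical asymptotics ${}_2F_1(\lambda,\lambda+1;2\lambda+1;-t)\asymp(1+t)^{-\lambda}$ as $t\to\infty$, and the algebraic identity $|1-zw|^2-|1-z\bar w|^2=4({\rm Im}\,z)({\rm Im}\,w)$, which gives $|1-z\bar w|^2+4|{\rm Im}\,z\,{\rm Im}\,w|\asymp(|1-zw|+|1-z\bar w|)^2$. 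To avoid the logarithmic blow-up of ${}_2F_1$ near argument $1$, one works with whichever of the two forms of $P_0$ in (\ref{Poi-0}) keeps its hypergeometric argument bounded away from $1$: the first form when $|1-zw|\ge|1-z\bar w|$, the second otherwise. Combining the three bounds yields (\ref{Bergman-kernal-2}).
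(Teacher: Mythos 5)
Your decomposition $K_\lambda(z,w)=C(z,w)+\frac{1}{\lambda+1}\,r\frac{d}{dr}C(z,w)$ (radial derivative in $z=re^{i\theta}$) is exactly the identity the paper's proof starts from, and your treatment of the first two pieces --- invoking Lemma \ref{Cauchy-estimate-a} and absorbing its logarithm via $\frac{1}{s}\ln(s^{2}/t^{2}+2)\lesssim\frac{1}{s}+\frac{1}{t}$ --- is sound and matches the paper. The gap is in the piece you yourself identify as the main obstacle, the bound on $Q=\partial_rP_0$. Your prescription there is internally inconsistent: with the case split as written (first form of (\ref{Poi-0}) when $|1-zw|\ge|1-z\bar w|$, second form otherwise), the chosen argument satisfies $1-A=|1-z\bar w|^2/|1-zw|^2\le1$, resp. $1-\tilde A=|1-zw|^2/|1-z\bar w|^2\le1$, so in each case it lies in $[0,1)$ and is \emph{not} bounded away from $1$, and your cited asymptotics of ${}_2F_1$ at large negative argument are never applicable. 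If instead you follow your stated rationale and pick the form whose argument is $\le0$, the proof breaks: in the regime $|1-zw|\ll|1-z\bar w|$ the first form's prefactor derivative contributes $|1-zw|^{-2\lambda-1}\,|{}_2F_1(\lambda,\lambda;2\lambda+1;A)|\asymp|1-zw|^{-1}|1-z\bar w|^{-2\lambda}\ln\big(|1-z\bar w|^2/|1-zw|^2+2\big)$, which exceeds the required bound $(|1-zw|+|1-z\bar w|)^{-2\lambda}|1-zw|^{-1}$ by an unabsorbable logarithm.

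What is actually needed in the regime $|1-zw|\lesssim|1-z\bar w|$ (and what the paper does) is the \emph{second} form, precisely so that the prefactor is $|1-z\bar w|^{-2\lambda}$: the logarithmic size of ${}_2F_1(\lambda,\lambda+1;2\lambda+1;\tilde A)$ near $\tilde A=1$ then multiplies $|1-z\bar w|^{-2\lambda-1}$ and is absorbed via $\ln(x^2+2)\lesssim1+x$, while the derivative term involves ${}_2F_1(\lambda+1,\lambda+2;2\lambda+2;\tilde A)\asymp(1-\tilde A)^{-1}$ --- a genuine pole at $\tilde A=1$, not a logarithm --- which must be paired with the compensating factor $r\frac{d}{dr}\tilde A=\tilde A(1-|zw|^2)/|1-z\bar w|^2$ to give $(1-\tilde A)^{-1}\,r\frac{d}{dr}\tilde A\lesssim(1-|zw|^2)/|1-zw|^2\lesssim|1-zw|^{-1}$. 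Symmetrically, for $|1-z\bar w|\lesssim|1-zw|$ one uses the first form, where ${}_2F_1(\lambda,\lambda;2\lambda+1;A)$ is bounded on $[-1,1)$ and the logarithmic growth of ${}_2F_1(\lambda+1,\lambda+1;2\lambda+2;A)$ is killed by $|r\frac{d}{dr}A|\lesssim|1-zw|^{-1}$ together with $x\ln(x^{-2}+2)\lesssim1$. None of these mechanisms appears in your sketch, so the key estimate on $Q$ remains unproved as written.
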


\begin{proof}
For $|zw|<1$ and $z=re^{i\theta}$, one has
\begin{align*}
(\lambda+1)K_{\lambda}(z,w)=r\frac{d}{dr}\left[C(z,w)\right]+(\lambda+1)C(z,w).
\end{align*}
By Lemma \ref{Cauchy-estimate-a} and on account of the fact $\frac{1}{s}\ln\left(\frac{s^{2}}{t^2}+2\right)\lesssim\frac{1}{s}+\frac{1}{t}$ for $s,t>0$, it suffices to show that $\left|r\frac{d}{dr}\left[C(z,w)\right]\right|$ has the same upper bound as in (\ref{Bergman-kernal-2}).

Put
\begin{align*}
A=\frac{4({\rm Im} z)({\rm Im} w)} {|1-zw|^{2}},\qquad \tilde{A}=-\frac{4({\rm Im} z)({\rm Im}
  w)}{|1-z\bar{w}|^{2}}.
\end{align*}
It is noted that
\begin{align*}
r\frac{d}{dr}\left[\frac{1}{1-z\bar{w}}\right]&=\frac{z\bar{w}}{(1-z\bar{w})^2},\qquad z=re^{i\theta},\\
r\frac{d}{dr}\left[\frac{1}{|1-zw|^{2\lambda}}\right]
&=2\lambda\frac{{\mathrm Re}\,(zw)-|zw|^2}{|1-zw|^{2\lambda+2}},\qquad z=re^{i\theta},\\
r\frac{d}{dr}A&=A\left(1+2\frac{{\mathrm Re}\,(zw)-|zw|^2}{|1-zw|^{2}}\right)
=A\frac{1-|zw|^2}{|1-zw|^{2}},\qquad z=re^{i\theta}.
\end{align*}

For
$|1-z\bar{w}|^2\le2|1-zw|^2$, we have $-1\le A<1$ since $1-A=|1-z\bar{w}|^2/|1-zw|^2$, so that
\begin{align*}
\left|{}_2\!F_{1}\left(\lambda,\lambda;2\lambda+1;A\right)\right|
\lesssim 1.
\end{align*}
It follows from \cite[2.1(7)]{Er} that
\begin{align*}
r\frac{d}{dr}\left[{}_2\!F_{1}\left(\lambda,\lambda;2\lambda+1;A\right)\right]
=\frac{\lambda^2}{2\lambda+1}{}_2\!F_{1}\left(\lambda+1,\lambda+1;2\lambda+2;A\right)r\frac{d}{dr}A,
\end{align*}
and successively using \cite[2.1(7) and 2.1(23)]{Er},
\begin{align} \label{Gauss-1}
{}_2\!F_{1}(a,b;a+b;t)&=1+\frac{ab}{a+b}\int_0^t(1-s)^{-1}{}_2\!F_{1}(a,b;a+b+1;s)dt \nonumber\\
&\asymp1+\ln\frac{1}{1-t}\asymp\ln\left(\frac{1}{1-t}+2\right),\qquad -1\le t<1.
\end{align}
Thus we have
\begin{align*}
\left|r\frac{d}{dr}\left[{}_2\!F_{1}\left(\lambda,\lambda;2\lambda+1;A\right)\right]\right|
\lesssim \frac{1}{|1-zw|}\ln\left(\frac{|1-zw|^2}{|1-z\bar{w}|^2}+2\right)
\lesssim \frac{1}{|1-z\overline{w}|},\qquad z=re^{i\theta},
\end{align*}
where the last inequality is based on the fact $s\ln\left(s^{-2}+2\right)\lesssim1$ for $s\in(0,2]$.
Now from (\ref{Cauchy-kernel-2-2}) and the first equality in (\ref{Poi-0}) we get, for
$|1-z\bar{w}|^2\le2|1-zw|^2$,
\begin{align}\label{Cauchy-inequality-2}
\left|r\frac{d}{dr}\left[C(z,w)\right]\right|
\lesssim &\frac{1}{|1-z\overline{w}|^2}\frac{1}{|1-zw|^{2\lambda}}
\asymp\frac{|1-z\overline{w}|^{-2}}{(|1-z\overline{w}|+|1-zw|)^{2\lambda}},\qquad z=re^{i\theta}.
\end{align}

If $2|1-zw|^2<|1-z\bar{w}|^2$, we have $1/2<\tilde{A}<1$ since $1-\tilde{A}=|1-zw|^2/|1-z\bar{w}|^2$, and then, (\ref{Gauss-1}) shows
\begin{align*}
\left|{}_2\!F_{1}\left(\lambda,\lambda+1;2\lambda+1;\tilde{A}\right)\right|
\lesssim \ln\left(\frac{|1-z\bar{w}|^2}{|1-zw|^2}+2\right).
\end{align*}
Again using \cite[2.1(7)]{Er} gives
\begin{align*}
r\frac{d}{dr}\left[{}_2\!F_{1}\left(\lambda,\lambda+1;2\lambda+1;\tilde{A}\right)\right]
=\frac{\lambda(\lambda+1)}{2\lambda+1}{}_2\!F_{1}\left(\lambda+1,\lambda+2;2\lambda+2;\tilde{A}\right)r\frac{d}{dr}\tilde{A},
\end{align*}
but by \cite[2.1(23)]{Er},
\begin{align*}
{}_2\!F_{1}\left(\lambda+1,\lambda+2;2\lambda+2;\tilde{A}\right)
=(1-\tilde{A})^{-1}{}_2\!F_{1}\left(\lambda+1,\lambda;2\lambda+2;\tilde{A}\right),
\end{align*}
so that
\begin{align*}
\left|r\frac{d}{dr}\left[{}_2\!F_{1}\left(\lambda,\lambda+1;2\lambda+1;\tilde{A}\right)\right]\right|
\lesssim &\frac{|1-z\bar{w}|^2}{|1-zw|^2}\frac{1-|zw|^2}{|1-z\bar{w}|^2}
=\frac{1-|zw|^2}{|1-zw|^2},\qquad z=re^{i\theta}.
\end{align*}
Now from (\ref{Cauchy-kernel-2-2}) and the second equality in (\ref{Poi-0}) we get, for
$2|1-zw|^2<|1-z\bar{w}|^2$,
\begin{align}\label{Cauchy-inequality-3}
\left|r\frac{d}{dr}\left[C(z,w)\right]\right|
\lesssim &\frac{1}{|1-z\overline{w}|^{2\lambda+2}}\ln\left(\frac{|1-z\bar{w}|^2}{|1-zw|^2}+2\right)
+\frac{1}{|1-z\overline{w}|^{2\lambda+1}}\frac{1}{|1-zw|} \nonumber\\
\lesssim &\frac{|1-z\overline{w}|^{-1}|1-zw|^{-1}}{(|1-z\overline{w}|+|1-zw|)^{2\lambda}},\qquad z=re^{i\theta}.
\end{align}

Combining (\ref{Cauchy-inequality-2}) and (\ref{Cauchy-inequality-3}) shows that $\left|r\frac{d}{dr}\left[C(z,w)\right]\right|$ has an upper bound like that in (\ref{Bergman-kernal-2}), as disired.
\end{proof}

\begin{theorem}\label{Bergman-projection-a}
For $1<p<\infty$, the operator $P_{\lambda}$ is bounded from $L_{\lambda}^{p}(\DD)$ onto $A^{p}_{\lambda}(\DD)$.
\end{theorem}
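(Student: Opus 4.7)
The plan is to apply Schur's test to the pointwise majorant of $|K_\lambda(z,w)|$ supplied by Lemma \ref{Bergman-kernel-a}, and then to deduce surjectivity from the reproducing formula. Since $\int_{\DD}d\sigma_\lambda=1$ the measure is finite, so H\"older's inequality yields $A^p_\lambda(\DD)\subset A^1_\lambda(\DD)$, and Proposition \ref{reproducing-Bergman-a} gives $P_\lambda f=f$ for every $f\in A^p_\lambda(\DD)$; thus surjectivity is automatic once boundedness is in hand. Moreover Proposition \ref{reproducing-Bergman-c} guarantees that $P_\lambda f$ is $\lambda$-analytic on $\DD$ for every $f\in L^1_\lambda(\DD)$, so what remains is the a priori estimate $\|P_\lambda f\|_{L^p_\lambda(\DD)}\lesssim\|f\|_{L^p_\lambda(\DD)}$.

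To run Schur's test I would use the weight $h(z)=(1-|z|^2)^{-s}$ with $s>0$ chosen small enough that both $sp<1$ and $sp'<1$, where $1/p+1/p'=1$. Letting $K^*(z,w)$ denote the right-hand side of Lemma \ref{Bergman-kernel-a}, and noting that $K^*(z,w)=K^*(w,z)$ because $|1-z\bar w|=|1-\bar z w|$, the two Schur conditions collapse into the single estimate
\begin{equation*}
\int_{\DD} K^*(z,w)(1-|w|^2)^{-\tau}\,d\sigma_\lambda(w)\lesssim (1-|z|^2)^{-\tau},\qquad 0<\tau<1.
\end{equation*}
Applying this for $\tau=sp'$ and $\tau=sp$, Schur's test delivers the $L^p_\lambda(\DD)$-boundedness of the positive integral operator with kernel $K^*$, and hence that of $P_\lambda$.

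The main obstacle is the displayed integral estimate. Its difficulty comes from the twin boundary singularities $|1-zw|$ and $|1-z\bar w|$, a footprint of the involution $y\mapsto -y$ hidden in $D_{\bar z}$, compounded with the anisotropic weight $|y|^{2\lambda}$ in $d\sigma_\lambda$. My plan is to expand the last factor of $K^*$ as the sum $|1-z\bar w|^{-1}+|1-zw|^{-1}$ and, in each summand, to control $(|1-zw|+|1-z\bar w|)^{2\lambda}$ from below by whichever of its two terms dominates, after splitting the region of integration accordingly. Each resulting piece is a Forelli--Rudin-type integral of $(1-|w|^2)^{-\tau}|y_w|^{2\lambda}$ against a single-pole kernel, which can be handled either by passing to polar coordinates about the boundary singularity and invoking classical hypergeometric identities, or by expanding in a series through (\ref{Cauchy-kernel-2-2})--(\ref{Poi-0}) and integrating termwise with the help of (\ref{Bergman-normalization-1}). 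Summing the pieces recovers the bound $(1-|z|^2)^{-\tau}$, which completes the argument.
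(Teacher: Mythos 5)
Your proposal is correct and follows essentially the same route as the paper: Schur's test with the weight $(1-|z|^2)^{-1/(pp')}$ applied to the majorant from Lemma \ref{Bergman-kernel-a}, reduced (by symmetry of that majorant) to the single estimate $\int_{\DD}|K_{\lambda}(z,w)|(1-|w|^2)^{-\tau}\,d\sigma_{\lambda}(w)\lesssim(1-|z|^2)^{-\tau}$ for $\tau\in\{1/p,1/p'\}$, with surjectivity coming from the reproducing formula. The only difference is in dispatching what you call the main obstacle: the paper absorbs the weight $|\sin\varphi|^{2\lambda}$ in one stroke via $|1-zw|+|1-z\bar w|\gtrsim 1-rs+|\sin\theta|+|\sin\varphi|\ge|\sin\varphi|$, reducing everything to the classical unweighted integral $\int_0^1\int_{-\pi}^{\pi}(1-s)^{-\tau}\left(1-rs+|\sin((\theta-\varphi)/2)|\right)^{-2}d\varphi\,ds$, which is a cleaner version of your region-splitting (and avoids the two-pole pieces that your splitting actually leaves behind).
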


\begin{proof}
We shall show that, for $\alpha=1/p$ and $\alpha=1/p'$,
\begin{align}\label{Bergman-projection-boundedness-1}
\int_{\mathbb{D}}|K_{\lambda}(z,w)|(1-|w|^2)^{-\alpha}d\sigma_{\lambda}(w) & \lesssim(1-|z|^2)^{-\alpha}, \qquad z\in\DD.
\end{align}
Thus by Schur's theorem (cf. \cite[p. 52]{Zhu1}), $P_{\lambda}$ is bounded from $L_{\lambda}^{p}(\DD)$ to $A^{p}_{\lambda}(\DD)$.

For $z=re^{i\theta}$, $w=se^{i\varphi}\in\DD$, we have the following inequalities
\begin{align*}
&|1-z\overline{w}|\asymp1-rs+\left|\sin(\theta-\varphi)/2\right|,\\
&|1-z\overline{w}|+|1-zw|\gtrsim 1-rs+|\sin\theta|+|\sin\varphi|,
\end{align*}
and from (\ref{Bergman-kernal-2}),
\begin{align}\label{Bergman-kernal-3}
|K_{\lambda}(z,w)|\lesssim \Phi_{r,\theta}(s,\varphi)+\Phi_{r,\theta}(s,-\varphi),
\end{align}
where
\begin{align}\label{Bergman-kernal-4}
 \Phi_{r,\theta}(s,\varphi)
 =\frac{\left(1-rs+|\sin\theta|+|\sin\varphi|\right)^{-2\lambda}}{\left(1-rs+\left|\sin(\theta-\varphi)/2\right|\right)^{2}}.
\end{align}
We have
\begin{align*}
\int_{\mathbb{D}}|K_{\lambda}(z,w)|(1-|w|^2)^{-\alpha}d\sigma_{\lambda}(w)
\lesssim\int_0^1\int_{-\pi}^{\pi}\Phi_{r,\theta}(s,\varphi)(1-s)^{-\alpha}|\sin\varphi|^{2\lambda}d\varphi ds,
\end{align*}
since the contribution of $\Phi_{r,\theta}(s,-\varphi)$ to the integral is the same as that of $\Phi_{r,\theta}(s,\varphi)$.
Thus
\begin{align*}
\int_{\mathbb{D}}|K_{\lambda}(z,w)|(1-|w|^2)^{-\alpha}d\sigma_{\lambda}(w)
\lesssim\int_0^1\int_{-\pi}^{\pi}\frac{(1-s)^{-\alpha}}{\left(1-rs+\left|\sin(\theta-\varphi)/2\right|\right)^{2}}\,d\varphi ds,
\end{align*}
from which (\ref{Bergman-projection-boundedness-1}) is yields after elementary calculations.
\end{proof}

The proof above actually gives a slightly stronger conclusion, stated as follows.

\begin{corollary}\label{Bergman-projection-a-1}
For $1<p<\infty$, the operator $T$ defined by
$$
(Tf)(z)=\int_{\DD}|f(w)||K_{\lambda}(z,w)|d\sigma_{\lambda}(w)
$$
maps $L_{\lambda}^{p}(\DD)$ boundedly into itself.
\end{corollary}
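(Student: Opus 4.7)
The plan is to observe that the proof of Theorem \ref{Bergman-projection-a} already establishes exactly the inequality needed for Corollary \ref{Bergman-projection-a-1}, because the key estimate (\ref{Bergman-projection-boundedness-1}) is formulated in terms of the non-negative kernel $|K_\lambda(z,w)|$ rather than $K_\lambda(z,w)$ itself. Recall Schur's test (cf.\ \cite[p.~52]{Zhu1}): for a non-negative measurable kernel $K(z,w)$, the existence of a positive measurable function $h$ and a constant $C>0$ with
$$
\int_\DD K(z,w) h(w)^{p'}\, d\sigma_\lambda(w) \le C h(z)^{p'}, \qquad \int_\DD K(z,w) h(z)^{p}\, d\sigma_\lambda(z) \le C h(w)^{p},
$$
implies the boundedness on $L^p_\lambda(\DD)$ of the integral operator $f\mapsto \int_\DD K(\cdot,w) f(w)\, d\sigma_\lambda(w)$.

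Concretely, I would take $K(z,w)=|K_\lambda(z,w)|$ and $h(w)=(1-|w|^2)^{-1/(pp')}$, so that $h(w)^{p'}=(1-|w|^2)^{-1/p}$ and $h(z)^{p}=(1-|z|^2)^{-1/p'}$. The first Schur inequality is then exactly (\ref{Bergman-projection-boundedness-1}) with $\alpha=1/p$. For the second, I would use the symmetry $|K_\lambda(z,w)|=|K_\lambda(w,z)|$, which is immediate from the defining series since $K_\lambda(z,w)=\overline{K_\lambda(w,z)}$; after a relabelling of the variables of integration, it reduces to (\ref{Bergman-projection-boundedness-1}) with $\alpha=1/p'$. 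Both cases were already verified in the proof of Theorem \ref{Bergman-projection-a}, so no new computation is required.

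There is essentially no obstacle here — the corollary is really a restatement of what the Schur-test proof of Theorem \ref{Bergman-projection-a} already produces, since Schur's test applied to a non-negative kernel automatically controls the sublinear operator $T$. Indeed, for any $f\in L^p_\lambda(\DD)$, $|f|\in L^p_\lambda(\DD)$ with the same norm, and $(Tf)(z)$ is nothing other than the positive operator with kernel $|K_\lambda(z,w)|$ applied to $|f|$. Thus the only point worth recording is that the same calculation yielding Theorem \ref{Bergman-projection-a} delivers, with no extra work, the absolute-value form stated as Corollary \ref{Bergman-projection-a-1}; this stronger version is convenient later for interchanging integration with limits or with differentiation.
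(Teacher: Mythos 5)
Your argument is correct and coincides with the paper's own treatment: the authors simply remark that the proof of Theorem \ref{Bergman-projection-a} already yields the corollary, precisely because the Schur-test estimate (\ref{Bergman-projection-boundedness-1}) is established for the non-negative kernel $|K_{\lambda}(z,w)|$ with both $\alpha=1/p$ and $\alpha=1/p'$, and the symmetry $|K_{\lambda}(z,w)|=|K_{\lambda}(w,z)|$ converts the second exponent into the second Schur inequality. No gap; this is the intended reading of the corollary.
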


\begin{corollary}\label{Bergman-projection-b}
There is an orthogonal projection from $L_{\lambda}^{2}(\DD)$ onto $A^{2}_{\lambda}(\DD)$. Such an operator is uniquely determined by (\ref{Bergman-projection-1}).
\end{corollary}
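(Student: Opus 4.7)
The strategy is to verify directly that the operator $P_\lambda$ defined by (\ref{Bergman-projection-1}) is itself the orthogonal projection of $L_\lambda^2(\DD)$ onto $A_\lambda^2(\DD)$; uniqueness then comes for free from the classical Hilbert space characterization of orthogonal projections as the bounded self-adjoint idempotents.

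First, Theorem \ref{Bergman-projection-a} applied with $p=2$ shows that $P_\lambda$ is a bounded linear operator from $L_\lambda^2(\DD)$ onto $A_\lambda^2(\DD)$. Because $d\sigma_\lambda$ is a probability measure on $\DD$, H\"older's inequality gives the inclusion $A_\lambda^2(\DD)\subset A_\lambda^1(\DD)$, and hence Proposition \ref{reproducing-Bergman-a} yields $P_\lambda f=f$ for every $f\in A_\lambda^2(\DD)$. Combining these two facts, $P_\lambda^2=P_\lambda$ on $L_\lambda^2(\DD)$, so $P_\lambda$ is a bounded idempotent with range equal to $A_\lambda^2(\DD)$.

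To upgrade $P_\lambda$ to an orthogonal projection, I would verify self-adjointness. The Hermitian symmetry $K_\lambda(z,w)=\overline{K_\lambda(w,z)}$ is immediate from the series defining $K_\lambda$. For $f,g\in L_\lambda^2(\DD)$, Corollary \ref{Bergman-projection-a-1} together with the Cauchy--Schwarz inequality shows that the iterated integral
\[
\int_\DD\int_\DD |f(w)|\,|K_\lambda(z,w)|\,|g(z)|\,d\sigma_\lambda(w)\,d\sigma_\lambda(z)
\]
is finite, so Fubini's theorem is applicable to the expression
\[
\langle P_\lambda f, g\rangle=\int_\DD\int_\DD f(w)K_\lambda(z,w)\overline{g(z)}\,d\sigma_\lambda(w)\,d\sigma_\lambda(z);
\]
interchanging the order of integration and invoking the symmetry of $K_\lambda$ then gives $\langle P_\lambda f,g\rangle=\langle f,P_\lambda g\rangle$.

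Once $P_\lambda$ is established as bounded, idempotent, and self-adjoint on $L_\lambda^2(\DD)$, standard Hilbert space theory yields that its range $A_\lambda^2(\DD)$ is closed and that $P_\lambda$ coincides with the orthogonal projection onto it; uniqueness of such a projection onto a fixed closed subspace is classical. The only nontrivial step is the Fubini justification in proving self-adjointness, for which Corollary \ref{Bergman-projection-a-1} is precisely tailored; every other ingredient is an immediate consequence of results already established in this section.
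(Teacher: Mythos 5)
Your proposal is correct and follows essentially the same route as the paper: boundedness from Theorem \ref{Bergman-projection-a}, idempotency from the reproducing property in Proposition \ref{reproducing-Bergman-a}, and self-adjointness from the Hermitian symmetry of $K_{\lambda}$ together with the Fubini interchange licensed by Corollary \ref{Bergman-projection-a-1}. The only (harmless) organizational difference is that the paper obtains closedness of $A^{2}_{\lambda}(\DD)$ from its completeness theorem and then invokes the general Hilbert-space existence of the projection, whereas you deduce closedness of the range directly from $P_{\lambda}$ being a bounded self-adjoint idempotent, which neatly avoids the forward reference.
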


\begin{proof}
The existence and the uniqueness of the orthogonal projection from $L_{\lambda}^{2}(\DD)$ onto $A^{2}_{\lambda}(\DD)$ follows from the general theory of Hilbert spaces, since
$A^{2}_{\lambda}(\DD)$ is a closed subspace of $L_{\lambda}^{2}(\DD)$ (see Theorem \ref{completeness-Bergman-a} below). Moreover by Theorem \ref{Bergman-projection-a}, the operator $P_{\lambda}$ is bounded from $L_{\lambda}^{2}(\DD)$ into $A^{2}_{\lambda}(\DD)$, and by Proposition \ref{reproducing-Bergman-a}, $P_{\lambda}\left(L_{\lambda}^{2}(\DD)\right)=A^{2}_{\lambda}(\DD)$ and $P_{\lambda}^2=P_{\lambda}$. In addition, by Corollary \ref{Bergman-projection-a-1},
for $f,g\in L_{\lambda}^{2}(\DD)$ the interchange of order of the integration
\begin{align*}
\int_{\DD}\int_{\DD}f(w)K_{\lambda}(z,w)d\sigma_{\lambda}(w)\,\overline{g(z)}d\sigma_{\lambda}(z)
\end{align*}
is permitted, which implies that the adjoint $P_{\lambda}^*$ of $P_{\lambda}$ is itself.
\end{proof}

The operator $P_{\lambda}$ defined by (\ref{Bergman-projection-1}) is called the $\lambda$-Bergman projection on the disk $\DD$.

\section{Growth of functions in the $\lambda$-Hardy and $\lambda$-Bergman spaces}

\subsection{The $p$-means $M_p(f;r)$ of $\lambda$-analytic functions}

We recall several conclusions on the $\lambda$-Hardy spaces in \cite{LL1}.

\begin{theorem} \label{Hardy-mean-estimate-a} {\rm (\cite[Theorem 7.1]{LL1})}
Let $p_0\le p\le\ell\le+\infty$, $\delta=\frac{1}{p}-\frac{1}{\ell}$, and
let $f\in H^p_{\lambda}(\DD)$.
Then

{\rm (i)} \ for $p_0\le p\le\ell\le+\infty$, $M_{\ell}(f;r)\le c(1-r)^{-\delta(2\lambda+1)}\|f\|_{H_{\lambda}^p}$;

{\rm (ii)} \ for $p_0<p<\ell\le+\infty$,
$M_{\ell}(f;r)=o\left((1-r)^{-\delta(2\lambda+1)}\right)$;

{\rm (iii)} \ for $p_0<p<\ell\le+\infty$, $p\le k<+\infty$,
\begin{eqnarray*}
\left(\int_0^1(1-r)^{k\delta(2\lambda+1)-1}M_{\ell}(f;r)^kdr\right)^{1/k}\le
c\|f\|_{H_{\lambda}^p}.
\end{eqnarray*}
\end{theorem}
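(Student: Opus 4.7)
The plan is to establish (i)--(iii) sequentially, anchored on the $\lambda$-harmonic majorization $|f(z)|^p \le P(d\nu;z)$ with $d\nu\ge 0$ and $\|d\nu\|_{\mathfrak{B}_\lambda(\partial\DD)}\lesssim \|f\|_{H^p_\lambda}^p$ (available for $p\ge p_0$ by \cite[Theorem 6.3]{LL1}), combined with size estimates for the $\lambda$-Poisson kernel $P(z,w)$ extracted from (\ref{Poi-0}).

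For (i), I would first treat $\ell=\infty$. Direct analysis of (\ref{Poi-0}), bounding the hypergeometric factor via the behavior observed in (\ref{Gauss-1}), yields $\sup_\varphi P(z,e^{i\varphi}) \lesssim (1-|z|)^{-(2\lambda+1)}$, so the majorization gives $M_\infty(f;r) \lesssim (1-r)^{-(2\lambda+1)/p}\|f\|_{H^p_\lambda}$. For $p<\ell<\infty$, H\"older's inequality $M_\ell(f;r)^\ell \le M_\infty(f;r)^{\ell-p}\,M_p(f;r)^p$ combined with $M_p(f;r)\le \|f\|_{H^p_\lambda}$ supplies the claimed exponent $\delta(2\lambda+1)$. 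Part (ii) then follows by approximation: by (\ref{Hp-norm-convergence-1}) the dilations $f_s(z):=f(sz)$ are bounded on $\overline{\DD}$ and satisfy $\|f-f_s\|_{H^p_\lambda}\to 0$ as $s\to 1-$; given $\varepsilon>0$, choose $s$ with $\|f-f_s\|_{H^p_\lambda}<\varepsilon$, note $(1-r)^{\delta(2\lambda+1)}M_\ell(f_s;r)\to 0$ since $f_s$ is bounded, and apply (i) to $f-f_s$ to conclude $\limsup_{r\to 1-}(1-r)^{\delta(2\lambda+1)}M_\ell(f;r)\lesssim\varepsilon$.

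Part (iii) is the delicate piece. I would first reduce to the case $k=p$ by splitting $M_\ell(f;r)^k = M_\ell(f;r)^{k-p}\cdot M_\ell(f;r)^p$ and bounding the first factor by (i); after cancellation in the exponent, the claim becomes
\[
\int_0^1 (1-r)^{p\delta(2\lambda+1)-1}\,M_\ell(f;r)^p\,dr \;\lesssim\; \|f\|_{H^p_\lambda}^p.
\]
For this, I would use the majorization $|f|^p\le u:=P(d\nu;\cdot)$ together with the integrated Poisson-kernel estimate $\|P(z,\cdot)\|_{L^{\ell/p}(dm_\lambda)}\lesssim (1-|z|)^{-(2\lambda+1)(1-p/\ell)}$ (obtained by integrating (\ref{Poi-0}) against $dm_\lambda$), apply Minkowski's inequality to the Poisson representation of $u$, and recover the missing factor of $(1-r)$ via a dyadic decomposition $d\nu=\sum_j d\nu_j$ along boundary arcs of length $\sim 2^{-j}$, whose contributions sum geometrically instead of producing the divergent $\int_0^1(1-r)^{-1}dr$. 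The main obstacle is precisely this endpoint $k=p$ inequality: the pointwise bounds used in (i) are short by an exact factor $(1-r)^{-1}$, so one must extract genuine boundary cancellation---either through the Carleson-type decomposition above or via a Hardy--Stein identity for the Dunkl Laplacian $\Delta_\lambda$---and the nonlocal reflection term $[f(z)-f(\bar z)]/y^2$ appearing in $\Delta_\lambda$ obstructs the standard area-integral manipulations, which will require careful handling.
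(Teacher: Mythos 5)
This theorem is imported from \cite[Theorem 7.1]{LL1}; the paper contains no proof of it, so your argument can only be judged on its own terms. Parts (i) and (ii) are essentially correct and standard: the majorization in the form $|f|^p\le P(d\nu;\cdot)$ does follow from Theorem \ref{majorization-a} (though you should say how --- the theorem gives $|f|^{p_0}\le P(g;\cdot)$ with $g\in L_{\lambda}^{p/p_0}(\partial\DD)$, and one passes to exponent $p$ via Jensen's inequality applied to the probability measure $P(z,e^{i\varphi})dm_{\lambda}(\varphi)$); the bound $\sup_{\varphi}P(z,e^{i\varphi})\lesssim(1-|z|)^{-(2\lambda+1)}$ is exactly (\ref{Poisson-kernel-estimate-2}); the interpolation $M_{\ell}(f;r)^{\ell}\le M_{\infty}(f;r)^{\ell-p}M_p(f;r)^p$ and the dilation argument for (ii) are fine.

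Part (iii) is not proved, and the difficulty is worse than the ``careful handling'' you defer: the route you propose cannot work. After the correct reduction to $k=p$, you want $\int_0^1(1-r)^{p\delta(2\lambda+1)-1}M_{\ell}(f;r)^p\,dr\lesssim\|f\|^p_{H^p_{\lambda}}$ and plan to prove the corresponding inequality with $|f|^p$ replaced by its majorant $u=P(d\nu;\cdot)$, recovering the missing factor $(1-r)$ by decomposing $d\nu$ over boundary arcs. But the inequality for positive $\lambda$-harmonic majorants is simply false. Take $\ell=\infty$, so the claim for $u$ reads $\int_0^1(1-r)^{2\lambda}\|P(d\nu;r\,\cdot)\|_{\infty}\,dr\lesssim\|d\nu\|_{{\frak B}_{\lambda}(\partial\DD)}$. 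For $\lambda=0$ a unit point mass gives $\|P(d\nu;r\,\cdot)\|_{\infty}\asymp(1-r)^{-1}$ and the integral diverges; for $\lambda>0$ a normalized point mass at $e^{i\varphi_0}$ gives $\sup_{\theta}P(d\nu;re^{i\theta})\gtrsim(1-r)^{-1}(1-r+|\sin\varphi_0|)^{-2\lambda}$, whence the integral is $\gtrsim\log(1/|\sin\varphi_0|)$ and is unbounded as $\varphi_0\to0$. No decomposition of $d\nu$ can repair this, since each piece already produces a logarithm near its own arc and these do not sum to $\|d\nu\|$. In other words, (iii) genuinely uses the $\lambda$-analyticity of $f$ beyond the existence of a harmonic majorant: classically one factors out zeros, reduces to $p=2$, and applies Hilbert's inequality to Taylor coefficients --- and factorization is exactly what is unavailable here, as the paper itself warns that products of $\lambda$-analytic functions are not $\lambda$-analytic. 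The Hardy--Stein alternative you mention is likewise only a hope, for the reason you yourself give. Parts (i)--(ii) stand; part (iii) must be regarded as unproved by your argument.
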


\begin{lemma}\label{monotonicity-integral-mean-2-a}
If $f$ is $\lambda$-harmonic in $\DD$ and $1\le p<\infty$, then its integral mean $r\mapsto M_p(f;r)$ is nondecreasing over $[0,1)$; and if $f$ is $\lambda$-analytic in $\DD$ and $p_0\le p<1$, then $M_p(f;r')\le2^{2/p-1}M_p(f;r)$ for $0\le r'<r<1$.
\end{lemma}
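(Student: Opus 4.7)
The plan is to treat parts (i) and (ii) in succession; part (i) rests on the $\lambda$-Poisson representation of $\lambda$-harmonic functions combined with the $L^{p}_{\lambda}(\partial\DD)$-contractivity in Proposition \ref{Poi-property-1}(iv), while part (ii) reduces to part (i) by invoking the $\lambda$-harmonic majorization from \cite[Theorem 6.3]{LL1}.

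For part (i), the first step is to note that if $f$ is $\lambda$-harmonic on $\DD$, then the dilate $g(z):=f(rz)$ is again $\lambda$-harmonic on $\DD$ (by a direct scaling computation in the explicit formula for $\Delta_\lambda$) and continuous on $\overline{\DD}$. Uniqueness for the $\lambda$-harmonic Dirichlet problem with continuous boundary data---which one can verify by comparing the coefficients $c_n,\tilde c_n$ of $g$ and of the $\lambda$-Poisson integral of $g|_{\partial\DD}$ in the series representation recalled in Section 2---then gives
\[
f(rz)=\int_{-\pi}^{\pi}P(z,e^{i\varphi})\,f(re^{i\varphi})\,dm_\lambda(\varphi),\qquad z\in\DD.
\]
Substituting $z=(r'/r)e^{i\theta}$ and applying Proposition \ref{Poi-property-1}(iv) yields $M_p(f;r')\le M_p(f;r)$.

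For part (ii), I would apply \cite[Theorem 6.3]{LL1} to the dilate $f_r(z):=f(rz)$, which is $\lambda$-analytic on a neighborhood of $\overline{\DD}$ and hence belongs to $H^{p}_{\lambda}(\DD)$ for every $p\ge p_0$. This should yield a pointwise $\lambda$-harmonic majorization of $|f_r|^p$ of the form
\[
|f(rz)|^p\le 2^{2-p}\int_{-\pi}^{\pi}P(z,e^{i\varphi})\,|f(re^{i\varphi})|^p\,dm_\lambda(\varphi),\qquad z\in\DD,
\]
in which the constant $2^{2-p}$ is the essential quantitative output. Setting $z=(r'/r)e^{i\theta}$, integrating in $\theta$ against $dm_\lambda$, and interchanging the order of integration (both integrands are nonnegative), the problem reduces to verifying
\[
\int_{-\pi}^{\pi}P(se^{i\theta},e^{i\varphi})\,dm_\lambda(\theta)=1,\qquad 0\le s<1,\ \varphi\in[-\pi,\pi].
\]
This identity follows from the mean-value property for $\lambda$-harmonic functions: applying the Poisson representation derived in part (i) to the $\lambda$-harmonic function $u(z):=P(z,e^{i\varphi})$ at $z=0$ gives $\int u(se^{i\theta})\,dm_\lambda(\theta)=u(0)$, and $u(0)=P(0,e^{i\varphi})=1$ can be read off directly from (\ref{Cauchy-kernel-2-2})--(\ref{Poi-0}). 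Combining everything, $M_p(f;r')^p\le 2^{2-p}M_p(f;r)^p$, i.e.\ $M_p(f;r')\le 2^{2/p-1}M_p(f;r)$.

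The main obstacle is the correct extraction of the constant $2^{2-p}$ from the $\lambda$-harmonic majorization in \cite[Theorem 6.3]{LL1}; once that majorization is in hand in the required quantitative form, the remaining steps---Poisson representation, contractivity, Fubini, and the $\lambda$-harmonic mean-value property---are routine.
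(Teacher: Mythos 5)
Your part (i) is correct and is essentially the route the paper intends: the paper disposes of the first claim by citing \cite[Proposition 2.4(iv) and Lemma 3.3]{LL1}, which amount exactly to your two ingredients --- the $\lambda$-Poisson representation $f(rz)=\int_{-\pi}^{\pi}P(z,e^{i\varphi})f(re^{i\varphi})\,dm_{\lambda}(\varphi)$ for the dilate (justified, as you do, by dilation-invariance of $\Delta_{\lambda}$ and comparison of the coefficients $c_n,\tilde c_n$) and the $L^{p}_{\lambda}(\partial\DD)$-contractivity in Proposition \ref{Poi-property-1}(iv). Your verification of $\int_{-\pi}^{\pi}P(se^{i\theta},e^{i\varphi})\,dm_{\lambda}(\theta)=1$ via the mean-value property at the origin is also sound.

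Part (ii), however, has a genuine gap at precisely the point you flag. The inequality $|f(rz)|^{p}\le 2^{2-p}\int_{-\pi}^{\pi}P(z,e^{i\varphi})|f(re^{i\varphi})|^{p}\,dm_{\lambda}(\varphi)$ does not follow from \cite[Theorem 6.3]{LL1} (Theorem \ref{majorization-a} here). That theorem majorizes $|f|^{p_0}$ --- the critical exponent, not a general $p\in[p_0,1)$ --- by $P(g;z)$ for an \emph{unspecified} nonnegative $g$, and the constant $2^{2-p_0}$ enters only through the norm bound $\|g\|_{L^{p/p_0}_{\lambda}(\partial\DD)}\le 2^{2-p_0}\|f\|^{p_0}_{H^{p}_{\lambda}}$. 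If you apply it to $f_r$ and run your integration, the bound you obtain is $M_p(f;r')^{p}\le 2^{(2-p_0)p/p_0}\|f_r\|_{H^{p}_{\lambda}}^{p}$; since $\|f_r\|_{H^{p}_{\lambda}}=\sup_{t<r}M_p(f;t)$ is exactly the quantity the lemma is meant to control by $M_p(f;r)$, the argument is circular, and even setting that aside the constant $2^{(2-p_0)p/p_0}$ exceeds $2^{2-p}$ whenever $p>p_0$. What is actually needed is a pointwise sub-mean-value inequality of the form $|f(z)|^{p}\le 2^{2-p}\,P\bigl(|f(re^{i\cdot})|^{p};z/r\bigr)$ for $|z|<r$ and every $p\in[p_0,1)$, which is a different (and, in the relevant direction, stronger) statement; the paper obtains the second half of the lemma by citing \cite[Theorem 6.4]{LL1}, a separate result, rather than by deducing it from the majorization theorem. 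To close the gap you would have to prove that pointwise inequality directly (or locate it inside the proof of \cite[Theorem 6.4]{LL1}); it is not ``extractable'' from Theorem \ref{majorization-a} as stated.
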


The first part above follows from \cite[Proposition 2.4(iv) and Lemma 3,3]{LL1}, and the second part from \cite[Theorem 6.4]{LL1}. For $p_0\le p<\infty$, we have the following nearly trivial inequality
\begin{align}\label{analytic-mean-estimate-0}
\int_0^1M_p(f;r)^p dr\lesssim\int_0^1M_p(f;r)^p r^{2\lambda+1}dr.
\end{align}
This is because
$\int_0^{1/2}M_p(f;r)^p dr \le 4\int_0^{1/2}M_p(f;1-r)^p dr=4\int_{1/2}^1M_p(f;r)^p dr$.

We now give some estimates of the $p$-means $M_p(f;r)$ of functions $f$ in the $\lambda$-Bergman spaces $A^{p}_{\lambda}(\DD)$.

\begin{proposition} \label{analytic-mean-estimate-a}
Let $p_0\le p\le l\le+\infty$, $\delta=\frac{1}{p}-\frac{1}{l}$, and let $f$ be a $\lambda$-analytic function on $\DD$. If for some $\beta\ge0$,
\begin{align}\label{analytic-mean-estimate-1}
M_p(f;r)\le c_0(1-r)^{-\beta},\qquad 0\le r<1,
\end{align}
then there exists some $c>0$ independent of $r$ and $c_0$ such that
\begin{align}\label{analytic-mean-estimate-2}
M_l(f;r)\le cc_0(1-r)^{-\delta(2\lambda+1)-\beta},\qquad 0\le r<1.
\end{align}
\end{proposition}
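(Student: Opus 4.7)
The plan is to reduce the Bergman-type growth estimate to the Hardy-space embedding of Theorem \ref{Hardy-mean-estimate-a}(i) via the standard dilation trick. For $\rho\in(0,1)$, set $f_{\rho}(z)=f(\rho z)$. Since $\phi_n(\rho z)=\rho^n\phi_n(z)$, the series expansion in Proposition \ref{anal-thm}(iii) shows that $f_\rho$ is $\lambda$-analytic on an open neighborhood of $\overline{\DD}$, and in particular $f_\rho\in H^p_\lambda(\DD)$, with $M_p(f_\rho;s)=M_p(f;\rho s)$ for all $s\in[0,1)$.

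Next I would control the $\lambda$-Hardy norm of $f_\rho$ by the growth hypothesis on $f$. Since $\rho s<\rho$, Lemma \ref{monotonicity-integral-mean-2-a}---which gives monotonicity of $M_p(f;\cdot)$ when $p\ge1$, and the quasi-monotonicity $M_p(f;r')\le 2^{2/p-1}M_p(f;r)$ when $p_0\le p<1$---yields
\[
\|f_\rho\|_{H^p_\lambda}=\sup_{0\le s<1}M_p(f;\rho s)\,\lesssim\,M_p(f;\rho)\le c_0(1-\rho)^{-\beta}.
\]

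Applying Theorem \ref{Hardy-mean-estimate-a}(i) to $f_\rho$ then gives
\[
M_l(f;\rho r)=M_l(f_\rho;r)\le c(1-r)^{-\delta(2\lambda+1)}\|f_\rho\|_{H^p_\lambda}\lesssim c_0(1-r)^{-\delta(2\lambda+1)}(1-\rho)^{-\beta}
\]
for all $r\in[0,1)$. Given $R\in[0,1)$, I would then make the balanced choice $\rho=(1+R)/2$ and $r=2R/(1+R)$, so that $\rho r=R$ while $1-\rho=(1-R)/2$ and $1-r=(1-R)/(1+R)\asymp 1-R$. Substituting these comparable quantities yields $M_l(f;R)\lesssim c_0(1-R)^{-\delta(2\lambda+1)-\beta}$, which is the desired bound.

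No step presents a substantive obstacle. The only minor subtlety is the failure of strict monotonicity of $M_p$ in the range $p_0\le p<1$, but the quasi-monotonicity provided by Lemma \ref{monotonicity-integral-mean-2-a} is absorbed into the implicit constant. The degenerate cases $p=l=\infty$ (where the conclusion coincides with the hypothesis) and $l=\infty$ with $p<\infty$ (already included in Theorem \ref{Hardy-mean-estimate-a}(i)) need no separate argument.
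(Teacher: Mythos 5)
Your proposal is correct and follows essentially the same route as the paper: dilate to $f_\rho$, bound $\|f_\rho\|_{H^p_\lambda}$ by $M_p(f;\rho)$ via the (quasi-)monotonicity of Lemma \ref{monotonicity-integral-mean-2-a}, and invoke Theorem \ref{Hardy-mean-estimate-a}(i). The only cosmetic difference is the final splitting of $R=\rho r$ (you take $\rho=(1+R)/2$, $r=2R/(1+R)$, while the paper takes $\rho=r=\sqrt{R}$), which changes nothing of substance.
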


\begin{proof} For $p=\infty$ (\ref{analytic-mean-estimate-2}) is trivial. In what follows we assume $p_0\le p<\infty$.

For $0<s<1$ set $f_s(z)=f(sz)$. We apply Theorem \ref{Hardy-mean-estimate-a}(i) to $f_s$ to get
$$
M_l(f_s;r)\le c(1-r)^{-\delta(2\lambda+1)}\|f_s\|_{H_{\lambda}^p},\qquad 0\le r<1,
$$
but
\begin{align}\label{dilation-integral-mean-1}
\|f_s\|_{H_{\lambda}^p}=\sup_{0\le r <1}M_p(f_s;r)=\sup_{0\le r <1}M_p(f;sr)\le 2^{2/p}M_p(f;s),
\end{align}
where the last inequality is based on Lemma \ref{monotonicity-integral-mean-2-a}. Combining the above two estimates gives $M_l(f;sr)\le c'(1-r)^{-\delta(2\lambda+1)}M_p(f;s)$, which, together with (\ref{analytic-mean-estimate-1}), implies $M_l(f;r^2)\le c'c_0(1-r)^{-\delta(2\lambda+1)-\beta}$. Finally replacing $r$ by $\sqrt{r}$ proves (\ref{analytic-mean-estimate-2}).
\end{proof}

\begin{proposition} \label{Bergman-mean-estimate-a}
Let $p_0\le p<\infty$ and let $f$ be a $\lambda$-analytic function on $\DD$. If $f\in A^{p}_{\lambda}(\DD)$, then
$M_p(f;r)=o\left((1-r)^{-1/p}\right)$ as $r\rightarrow1-$, and $M_p(f;r)\lesssim(1-r)^{-1/p}\|f\|_{A_{\lambda}^{p}}$ for $0\le r<1$.
If for some $\alpha<1/p$, $M_p(f;r)\lesssim(1-r)^{-\alpha}$, then $f\in A^{p}_{\lambda}(\DD)$.
\end{proposition}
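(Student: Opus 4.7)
The plan is to express the Bergman norm as a one-dimensional integral of $M_p(f;r)^p$ with the weight $r^{2\lambda+1}$, and then use the near-monotonicity of $M_p(f;r)$ from Lemma~\ref{monotonicity-integral-mean-2-a} to translate between the pointwise growth of $M_p(f;r)$ and the finiteness of $\|f\|_{A^p_\lambda}$. Writing $z=re^{i\theta}$ and integrating in polar coordinates in the measure $d\sigma_\lambda(z)=c_\lambda|y|^{2\lambda}dx\,dy$ gives immediately
\begin{align*}
\|f\|_{A^p_\lambda}^p \,=\, (2\lambda+2)\int_0^1 r^{2\lambda+1}M_p(f;r)^p\,dr.
\end{align*}

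For the upper bound and the little-$o$ statement I would invoke Lemma~\ref{monotonicity-integral-mean-2-a} in the form $M_p(f;r')^p\lesssim M_p(f;r)^p$ valid for all $0\le r'\le r<1$ (this holds for $p\ge1$ because a $\lambda$-analytic function is $\lambda$-harmonic, and for $p_0\le p<1$ it is the second part of that lemma). Applied on the tail interval $[r,1)$ this yields
\begin{align*}
\int_r^1 s^{2\lambda+1}M_p(f;s)^p\,ds \,\gtrsim\, (1-r)\,M_p(f;r)^p,\qquad r\in[1/2,1),
\end{align*}
hence the pointwise estimate $(1-r)M_p(f;r)^p\lesssim\|f\|_{A^p_\lambda}^p$; since the left-hand side is the tail of a convergent integral whenever $f\in A^p_\lambda(\DD)$, it also tends to $0$ as $r\to1-$, which is exactly the little-$o$ assertion. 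The range $r\in[0,1/2]$ is handled by the same near-monotonicity applied between $r$ and $1/2$.

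For the converse direction I would substitute the hypothesis $M_p(f;r)\lesssim(1-r)^{-\alpha}$ directly into the polar formula above and obtain $\|f\|_{A^p_\lambda}^p\lesssim\int_0^1(1-r)^{-\alpha p}\,dr<\infty$, which is finite because $\alpha p<1$.

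The only real subtlety, and what I expect to be the main thing to keep honest, is the $p_0\le p<1$ case: there $r\mapsto M_p(f;r)$ is not genuinely monotone but only satisfies the weaker inequality $M_p(f;r')\le 2^{2/p-1}M_p(f;r)$ from Lemma~\ref{monotonicity-integral-mean-2-a}; one must carry this constant (which is independent of $r$) through the chain of estimates above and verify that it drops out of the final bounds. Everything else reduces to direct integration.
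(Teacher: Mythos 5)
Your proposal is correct and follows essentially the same route as the paper: the paper likewise combines the near-monotonicity of $M_p(f;\cdot)$ from Lemma \ref{monotonicity-integral-mean-2-a} with the polar-coordinate identity (in the form of inequality (\ref{analytic-mean-estimate-0})) to obtain $(1-r)M_p(f;r)^p\lesssim\int_r^1M_p(f;s)^ps^{2\lambda+1}\,ds$, deducing both the pointwise bound and the little-$o$ statement from the vanishing tail, and treats the converse as immediate. Your explicit attention to the constant $2^{2/p-1}$ in the range $p_0\le p<1$ is exactly the point the paper's appeal to that lemma silently relies on.
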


Indeed, for $f\in A^{p}_{\lambda}(\DD)$, one uses Lemma \ref{monotonicity-integral-mean-2-a} to get
$$
(1-r)M_p(f;r)^p\le 4\int_r^1M_p(f;s)^p ds\lesssim\int_r^1M_p(f;s)^p s^{2\lambda+1}ds\rightarrow0\quad
\hbox{as}\,\,r\rightarrow1-;
$$
and further by use of (\ref{analytic-mean-estimate-0}), $(1-r)M_p(f;r)^p\lesssim\|f\|_{A_{\lambda}^{p}}^p$.
This verifies the first part of the proposition. The second part is obvious.

The following theorem is a combination of Propositions \ref{analytic-mean-estimate-a} and \ref{Bergman-mean-estimate-a}.

\begin{theorem}\label{Bergman-mean-estimate-b}
Let $p_0\le p<\infty$, $p\le \ell\le\infty$, and $\delta=\frac{1}{p}-\frac{1}{\ell}$. Then for $f\in A^{p}_{\lambda}(\DD)$,
$M_{\ell}(f;r)=o\left((1-r)^{-\delta(2\lambda+1)-1/p}\right)$ as $r\rightarrow1-$, and
\begin{align*}
M_{\ell}(f;r)\lesssim(1-r)^{-\delta(2\lambda+1)-1/p}\|f\|_{A_{\lambda}^{p}},\qquad 0\le r<1.
\end{align*}
\end{theorem}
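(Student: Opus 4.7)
The plan is to combine the two previous propositions directly. From Proposition \ref{Bergman-mean-estimate-a}, the hypothesis $f\in A^{p}_{\lambda}(\DD)$ already provides the growth estimate $M_p(f;r)\lesssim (1-r)^{-1/p}\|f\|_{A_{\lambda}^{p}}$ for $0\le r<1$. I would then feed this into Proposition \ref{analytic-mean-estimate-a} with exponent $\beta=1/p$ and constant $c_0\lesssim\|f\|_{A_{\lambda}^{p}}$, obtaining
\begin{align*}
M_\ell(f;r)\lesssim (1-r)^{-\delta(2\lambda+1)-1/p}\|f\|_{A_{\lambda}^{p}},\qquad 0\le r<1,
\end{align*}
which is precisely the desired norm bound.

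For the little-$o$ statement I would repeat the argument used in the proof of Proposition \ref{analytic-mean-estimate-a}, but use the sharper fact, also given by Proposition \ref{Bergman-mean-estimate-a}, that $M_p(f;r)=\eta(r)(1-r)^{-1/p}$ with $\eta(r)\to 0$ as $r\to 1-$. Specifically, for the dilated function $f_s(z)=f(sz)$, Theorem \ref{Hardy-mean-estimate-a}(i) combined with $\|f_s\|_{H_{\lambda}^{p}}\le 2^{2/p}M_p(f;s)$ (Lemma \ref{monotonicity-integral-mean-2-a}) yields
\begin{align*}
M_\ell(f;sr)\le c(1-r)^{-\delta(2\lambda+1)}M_p(f;s),\qquad 0\le r,s<1.
\end{align*}
Taking $s=r$ and substituting the asymptotic relation for $M_p(f;r)$ gives $M_\ell(f;r^2)\lesssim \eta(r)(1-r)^{-\delta(2\lambda+1)-1/p}$. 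After the change of variable $r'=r^2$, under which $1-r'\asymp 1-r$ on $[0,1)$, this becomes $M_\ell(f;r')\lesssim \eta(\sqrt{r'})(1-r')^{-\delta(2\lambda+1)-1/p}$, and since $\eta(\sqrt{r'})\to 0$ as $r'\to 1-$, the $o$-estimate follows.

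There is essentially no obstacle, since the analytic content is already packaged in Propositions \ref{analytic-mean-estimate-a} and \ref{Bergman-mean-estimate-a}; the only minor bookkeeping point is propagating the little-$o$ through the substitution $r\mapsto r^2$, which is handled by the uniform equivalence $1-r^2\asymp 1-r$. The endpoint $\ell=\infty$ requires no separate treatment, as both cited results cover it.
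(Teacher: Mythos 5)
Your proposal is correct and follows essentially the same route as the paper, which states the theorem precisely as a combination of Propositions \ref{analytic-mean-estimate-a} and \ref{Bergman-mean-estimate-a}; your derivation of the norm bound is exactly that combination. Your extra care in propagating the little-$o$ through the dilation step $r\mapsto r^2$ via $1-r^2\asymp 1-r$ fills in a detail the paper leaves implicit, and it is handled correctly.
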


\subsection{Point-evaluation of functions in the $\lambda$-Hardy and $\lambda$-Bergman spaces}

We shall need the $\lambda$-harmonic majorizations of functions in $H_{\lambda}^p(\DD)$.

\begin{theorem} \label{majorization-a} {\rm (\cite[Theorem 6.3]{LL1}, $\lambda$-harmonic majorization)}
Let $p\ge p_0$ and $f\in H_{\lambda}^p(\DD)$.

  {\rm (i)} \  If $p >p_0,$ then there exists a nonnegative function $g(\theta)\in
L_{\lambda}^{p/p_0}(\partial\DD)$ such that for $z\in\DD$,
\begin{eqnarray*}
|f(z)|^{p_0}\leq P(g;z),
\end{eqnarray*}
and
\begin{eqnarray*}
\|f\|^{p_0}_{H^{p}_{\lambda}}\le\|g\|_{L_{\lambda}^{p/p_0}(\partial\DD)}\le2^{2-p_0}\|f\|^{p_0}_{H^{p}_{\lambda}}.
\end{eqnarray*}

{\rm (ii)} \ If $p=p_0,$ there exists a finite positive measure
  $d\nu\in {\frak B}_{\lambda}(\partial\DD)$ such that for $z\in\DD$,
\begin{eqnarray*}
|f(z)|^{p_0}\le P(d\nu;z),
\end{eqnarray*}
and
\begin{eqnarray*}
\|f\|^{p_0}_{H^{p_0}_{\lambda}}\le \|d\nu\|_{{\frak B}_{\lambda}(\partial\DD)}
\le2^{2-p_0}\|f\|^{p_0}_{H^{p_0}_{\lambda}}.
\end{eqnarray*}
\end{theorem}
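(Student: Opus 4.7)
The plan is to reduce the statement to the $\lambda$-subharmonicity of $|f|^{p_0}$ and then produce the majorant through dilation followed by weak compactness. First I would establish the key pointwise inequality
$$\Delta_\lambda\bigl(|f|^{p_0}\bigr)\ge 0 \quad \text{on } \DD,$$
valid for every $\lambda$-analytic $f$, interpreted distributionally at the zeros of $f$. This is the $\lambda$-analogue of the Stein--Weiss subharmonicity for generalized Cauchy--Riemann systems on the half-space, with $p_0=2\lambda/(2\lambda+1)$ playing the role of the critical lower bound $(d-1)/d$. The most natural route is via Proposition \ref{anal-thm}(ii): writing $f=u+iv$ with $\partial_x u=D_y v$ and $D_y u=-\partial_x v$, one computes $\Delta_\lambda(|f|^{p_0})$ away from the zeros of $f$ and separates the local differential contribution from the nonlocal reflection term $-(\lambda/y^2)\bigl(|f(z)|^{p_0}-|f(\bar z)|^{p_0}\bigr)$. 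Using the Cauchy--Riemann identities to rewrite the local part in terms of $|\nabla f|^2$ and $|f|$, one is led to a pointwise algebraic inequality that is non-negative precisely at the threshold $p_0$; its core is a weighted Cauchy--Schwarz estimate on the pair $(f(z),f(\bar z))$ exploiting the reflection symmetry.

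Second, with $|f|^{p_0}$ $\lambda$-subharmonic in hand, for each $0<r<1$ I set $h_r(\theta)=|f(re^{i\theta})|^{p_0}$. Since $F(z)=f(rz)$ is $\lambda$-analytic in a neighborhood of $\overline{\DD}$, $|F|^{p_0}$ is $\lambda$-subharmonic and continuous on $\overline{\DD}$, and a maximum-principle argument for $\lambda$-subharmonic functions—based on positivity of the $\lambda$-Poisson kernel and the reproducing formula~(\ref{Poisson-1})—should yield
$$|f(\rho e^{i\theta})|^{p_0}\le P_{\rho/r}(h_r;\theta),\qquad 0\le \rho<r.$$

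Third, I would pass to the limit $r\to 1^{-}$ by weak compactness. For part~(i) with $p>p_0$, the identity $\|h_r\|_{L^{p/p_0}_\lambda(\partial\DD)}=M_p(f;r)^{p_0}\le\|f\|_{H^p_\lambda}^{p_0}$ shows $\{h_r\}$ is bounded in the reflexive space $L^{p/p_0}_\lambda(\partial\DD)$, so a subsequence $h_{r_k}\rightharpoonup g$ weakly. Since $\varphi\mapsto P(z,e^{i\varphi})$ is bounded and continuous on $\partial\DD$ for each fixed $z\in\DD$, it lies in the dual space and we may pass to the limit in the pointwise majorization to obtain $|f(z)|^{p_0}\le P(g;z)$. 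The lower norm bound $\|f\|^{p_0}_{H^p_\lambda}\le\|g\|_{L^{p/p_0}_\lambda(\partial\DD)}$ then follows from Proposition \ref{Poi-property-1}(iv) and Fatou's lemma, while the upper bound $\|g\|_{L^{p/p_0}_\lambda(\partial\DD)}\le 2^{2-p_0}\|f\|^{p_0}_{H^p_\lambda}$ is obtained from weak lower semi-continuity of the norm combined with the quasi-monotonicity of $p_0$-means recorded in Lemma \ref{monotonicity-integral-mean-2-a}. For part~(ii) with $p=p_0$, $\{h_r\,dm_\lambda\}$ is bounded in total variation, so the Banach--Alaoglu theorem delivers a weak* limit measure $d\nu\in{\frak B}_\lambda(\partial\DD)$, and the same argument yields $|f(z)|^{p_0}\le P(d\nu;z)$ with the stated bounds.

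The main obstacle will be the first step: establishing the sharp $\lambda$-subharmonicity of $|f|^{p_0}$. The difficulty lies in the nonlocal reflection term in $\Delta_\lambda$, which carries no definite sign and must be precisely dominated by the local gradient-type terms through the specific structure of the generalized Cauchy--Riemann system; the algebraic inequality is expected to be tight exactly at $p_0=2\lambda/(2\lambda+1)$, so no smaller exponent will work. A secondary technical issue is behavior at the zeros of $f$, which is dealt with by the standard regularization replacing $|f|^{p_0}$ by $(|f|^2+\varepsilon)^{p_0/2}$ and then letting $\varepsilon\to 0^{+}$.
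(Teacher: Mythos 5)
First, a point of reference: the paper does not prove this theorem at all --- it is imported verbatim from \cite[Theorem 6.3]{LL1} --- so there is no internal argument to compare yours against, and what follows measures your outline against the known proof in that reference. Your architecture is the right one and is essentially the route of \cite{LL1} (and of Stein--Weiss and Muckenhoupt--Stein before it): establish that $|f|^{p_0}$ is $\lambda$-subharmonic, majorize the dilates by $\lambda$-Poisson integrals of their boundary restrictions, and extract $g$ (resp.\ $d\nu$) by weak compactness in $L^{p/p_0}_\lambda(\partial\DD)$ (resp.\ weak$^*$ compactness of measures). Your second and third steps are sound as sketched; in fact they yield the upper bound with constant $1$ rather than $2^{2-p_0}$, since weak lower semicontinuity gives $\|g\|\le\liminf_k\|h_{r_k}\|\le\sup_r M_p(f;r)^{p_0}=\|f\|_{H^p_\lambda}^{p_0}$, which is stronger than what the theorem asserts.

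The genuine gap is that your first step --- the inequality $\Delta_\lambda\bigl(|f|^{p_0}\bigr)\ge 0$ at the critical exponent $p_0=2\lambda/(2\lambda+1)$ --- is the entire analytic content of the theorem, and you have supplied a plan for it rather than a proof. The difficulty you correctly flag, namely the nonlocal term $-\lambda y^{-2}\bigl(|f(z)|^{p_0}-|f(\bar z)|^{p_0}\bigr)$, which has no sign and couples the values of $f$ at $z$ and $\bar z$, is exactly where such an argument can fail; asserting that ``a weighted Cauchy--Schwarz estimate on the pair $(f(z),f(\bar z))$'' will dominate it by the local gradient terms is a conjecture about the algebra, not a verification. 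Note that in the Muckenhoupt--Stein half-plane model the reflection term is absent from the Laplacian, so their computation does not transfer directly, and the threshold $2\lambda/(2\lambda+1)$ has to be re-derived from the coupled system; this is precisely the delicate part of \cite[Section 6]{LL1}. A secondary, more routine omission: the sub-mean-value property $|F(w)|^{p_0}\le P\bigl(|F|^{p_0}\big|_{\partial\DD};w\bigr)$ invoked in your second step rests on a maximum principle for $\Delta_\lambda$, which must itself be checked against the nonlocal term (it does hold --- at an interior maximum off the real axis one has $u(z_0)\ge u(\bar z_0)$, so the reflection term is nonpositive there --- but this deserves a line). Until the subharmonicity computation is actually carried out, with the regularization $(|f|^2+\varepsilon)^{p_0/2}$ you mention, the proposal is a correct roadmap with its keystone missing.
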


We first consider the $\lambda$-harmonic function $u(x,y)$ in the disc $\DD$ satisfying the condition, for $1\le p<\infty$,
\begin{align}\label{harmonic-Lp-condition-1}
C_p(u):=\sup_{0\le r<1}\int_{-\pi}^{\pi}|u(r\cos\theta,r\sin\theta)|^p
dm_{\lambda}(\theta)<\infty.
\end{align}

\begin{lemma} \label{Poi-charact} {\rm (\cite[Corollary 3.5]{LL1}, characterization of $\lambda$-Poisson integrals)} \
Let $u(x,y)$ be $\lambda$-harmonic in $\DD$. Then

{\rm (i)} for $1<p<\infty$, $u(x,y)$ is the $\lambda$-Poisson
  integral $P(f;z)$ with $z=x+iy$ of some $f\in L_{\lambda}^{p}(\partial\DD)$ if and only if $C_p(u)<\infty$;

{\rm (ii)} $u(x,y)$ is the $\lambda$-Poisson
integral $P(d\nu;z)$ of a measure $d\nu\in {\frak B}_{\lambda}(\partial\DD)$ if and
only if $C_1(u)<\infty$;

{\rm (iii)} in parts (i) and (ii), $\|f\|_{L_{\lambda}^{p}(\partial\DD)}=C_p(u)^{1/p}$ for $1<p<\infty$, and $\|d\nu\|_{{\frak B_{\lambda}}}=C_1(u)$ for $p=1$.
\end{lemma}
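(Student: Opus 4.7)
My plan is to prove both directions by combining weak(-$*$) compactness with a Poisson reproducing identity. The ``if'' direction of both (i) and (ii), together with the upper half of (iii), is immediate from Proposition~\ref{Poi-property-1}(iv) and (vi): if $u = P(f; z)$ with $f \in L^{p}_\lambda(\partial\DD)$, then $\|u_r\|_{L^{p}_\lambda(\partial\DD)} = \|P_r f\|_{L^{p}_\lambda(\partial\DD)} \le \|f\|_{L^{p}_\lambda(\partial\DD)}$ for every $r \in [0, 1)$, whence $C_p(u) \le \|f\|^{p}_{L^{p}_\lambda}$; and analogously $C_1(u) \le \|d\nu\|_{{\frak B}_\lambda}$ whenever $u = P(d\nu; \cdot)$.

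For the ``only if'' direction I set $u_r(\varphi) := u(re^{i\varphi})$; the hypothesis $C_p(u) < \infty$ makes $\{u_r\}$ a norm-bounded family in $L^{p}_\lambda(\partial\DD)$. When $1 < p < \infty$, reflexivity and Banach--Alaoglu produce $r_n \to 1^-$ and $f \in L^{p}_\lambda(\partial\DD)$ with $u_{r_n} \rightharpoonup f$ weakly, with lower semicontinuity of the norm giving $\|f\|_{L^{p}_\lambda} \le C_p(u)^{1/p}$. The bridge back to $u$ is the reproducing identity
\begin{align*}
\int_{-\pi}^{\pi} P(\rho e^{i\theta}, e^{i\varphi}) u_r(\varphi)\, dm_\lambda(\varphi) = u(r\rho e^{i\theta}), \qquad r, \rho \in [0, 1),
\end{align*}
which I would derive by substituting the series expansion of the $\lambda$-harmonic function $u$ in the basis (\ref{harmonic-polynomial-1}) into the integral, interchanging sum and integral (the uniform convergence on compacta is furnished by the remarks following Proposition~\ref{anal-thm}), and noting that $P(z, \cdot)$ reproduces each basis element---a property easily verified from (\ref{Cauchy-kernel-2-1}), (\ref{Poisson-kernel-2-1}) and the orthonormality in Proposition~\ref{orthonirmal-basis-a}. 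Since $\varphi \mapsto P(\rho e^{i\theta}, e^{i\varphi})$ is continuous on $\partial\DD$, hence in $L^{p'}_\lambda(\partial\DD)$, passing $r_n \to 1^-$ in this identity yields $u(\rho e^{i\theta}) = P(f; \rho e^{i\theta})$ throughout $\DD$. Combined with the reverse bound $C_p(u)^{1/p} \le \|f\|_{L^{p}_\lambda}$ from Proposition~\ref{Poi-property-1}(iv), this delivers both (i) and the corresponding case of (iii).

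For (ii), where $p = 1$ and reflexivity fails, I would instead view the measures $d\tilde\mu_r := u_r\, dm_\lambda$ as a norm-bounded family in $\CM([-\pi, \pi]) = C([-\pi, \pi])^*$, extract a weak-$*$ convergent subsequence to a limit $d\tilde\mu$ by Banach--Alaoglu, and pass to the weak-$*$ limit in the reproducing identity above (legitimate since $P(\rho e^{i\theta}, \cdot) \in C(\partial\DD)$). One then recovers $d\nu \in {\frak B}_\lambda(\partial\DD)$ from the relation $d\tilde\mu = \tilde c_\lambda |\sin\varphi|^{2\lambda} d\nu$, obtaining $u = P(d\nu; \cdot)$ with $\|d\nu\|_{{\frak B}_\lambda} \le C_1(u)$, and the matching lower bound again from Proposition~\ref{Poi-property-1}(vi).

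The principal obstacle I anticipate is the delicate $p = 1$ step: one must ensure that the weak-$*$ limit $d\tilde\mu$ genuinely arises as $\tilde c_\lambda |\sin\varphi|^{2\lambda} d\nu$ for some $d\nu \in {\frak B}_\lambda(\partial\DD)$. Because $|\sin\varphi|^{2\lambda}$ vanishes at $\varphi = 0, \pm\pi$, the density relation cannot in general recover boundary atoms of $d\tilde\mu$ at those points. Ruling out such singular contributions (or absorbing them compatibly with the Poisson representation)---presumably by exploiting the $\lambda$-harmonic structure of $u$ beyond the bare integral bound $C_1(u) < \infty$---is where the main technical care is required.
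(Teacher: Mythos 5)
The first thing to note is that the paper does not prove parts (i) and (ii) at all: they are imported verbatim from \cite[Corollary 3.5]{LL1}, and the only argument the text supplies is for the norm identities in (iii). You are therefore attempting considerably more than the paper does. Your treatment of (i), and of the case $1<p<\infty$ of (iii), is sound and is the standard route (weak compactness of $\{u_r\}$ in the reflexive space $L^{p}_{\lambda}(\partial\DD)$, plus the reproducing identity $u(r\rho e^{i\theta})=P(u_r;\rho e^{i\theta})$, which indeed follows from termwise reproduction of the basis (\ref{harmonic-polynomial-1}) and the uniform convergence of the coefficient series); the paper instead obtains $\|f\|_{L_{\lambda}^{p}(\partial\DD)}=C_p(u)^{1/p}$ directly from Proposition \ref{Poi-property-1}(iv) and (v) (norm convergence $P_rf\to f$), which is available once (i) is taken as known.

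The gap you flag at $p=1$ is, however, genuine and not removable by soft arguments. Take $u(z)=P(z,1)=(1-|z|^2)\,|1-z|^{-2\lambda-2}$: this is $\lambda$-harmonic (each term of (\ref{Poisson-kernel-2-1}) is $\lambda$-harmonic in $z$), and $C_1(u)=1$ since $P\ge 0$, $P(z,w)=P(w,z)$ and $\int_{-\pi}^{\pi}P(r,e^{i\theta})\,dm_{\lambda}(\theta)=1$ by reproduction of $g\equiv1$. Yet $u_r\,dm_{\lambda}$ converges weak-$*$ to a point mass at $\varphi=0$, which cannot be written as $\tilde{c}_{\lambda}|\sin\varphi|^{2\lambda}d\nu$ for any locally finite Borel measure $d\nu$, because the weight vanishes there. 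So your scheme cannot produce the representing measure without unpacking exactly how \cite{LL1} formulates ${\frak B}_{\lambda}(\partial\DD)$ at the zeros of $|\sin\varphi|^{2\lambda}$ (in effect the boundary object there is the weighted measure itself); since the present paper simply cites \cite{LL1} for (ii), your argument does not close this step. The same unresolved identification also carries into your proof of $\|d\nu\|_{{\frak B}_{\lambda}}\le C_1(u)$ in (iii); the paper proves that inequality by a different and self-contained duality computation --- write $\|d\nu\|_{{\frak B}_{\lambda}}$ as a supremum of pairings against $h\in C(\partial\DD)$ with $\|h\|_{C(\partial\DD)}=1$, replace $h$ by $\lim_{r\to1-}P_r h$ uniformly via Proposition \ref{Poi-property-1}(v) with $X=C(\partial\DD)$, and use Fubini and the symmetry of the Poisson kernel to move $P_r$ onto $d\nu$ --- which never requires constructing or identifying any measure. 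You should either adopt that duality argument for the $p=1$ part of (iii) and leave (ii) as a citation, or supply the missing analysis of the weak-$*$ limit at $\varphi=0,\pm\pi$.
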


Note that, in the above lemma, for $1<p<\infty$ the equality $\|f\|_{L_{\lambda}^{p}(\partial\DD)}=C_p(u)^{1/p}$ follows from Proposition \ref{Poi-property-1}(iv) and (v) immediately. For $p=1$, the Riesz representation theorem gives
\begin{align*}
\|d\nu\|_{{\frak B_{\lambda}}}
=\sup c_{\lambda}\left|\int_{-\pi}^{\pi}h(\theta)|\sin\theta|^{2\lambda}d\nu(\theta)\right|,
\end{align*}
where the supreme is taken over all $h\in C(\partial\DD)$ with $\|h\|_{C(\partial\DD)}=1$. Applying Proposition \ref{Poi-property-1}(v) with $X=C(\partial\DD)$, we have
\begin{align*}
\|d\nu\|_{{\frak B_{\lambda}}}
&=\sup\lim_{r\rightarrow1-}c_{\lambda}\left|\int_{-\pi}^{\pi}P(h,re^{i\theta})|\sin\theta|^{2\lambda}d\nu(\theta)\right|\\
&=\sup\lim_{r\rightarrow1-}\left|\int_{-\pi}^{\pi}h(\varphi)P(d\nu,re^{i\varphi})dm_{\lambda}(\varphi)\right|,
\end{align*}
which is dominated by $\sup\|h\|_{C(\partial\DD)}C_1(u)=C_1(u)$. This, in conjunction with Proposition \ref{Poi-property-1}(vi), shows that $\|d\nu\|_{{\frak B_{\lambda}}}=C_1(u)$.

\begin{lemma} \label{Poisson-kernel-estimate-a} {\rm (\cite[Corollary 4.3]{LL1})}
Let $z=re^{i\theta}$, $w=e^{i\varphi}$. For
$\theta,\varphi\in[-\pi,\pi]$ and $r\in[0,1)$, we have
\begin{eqnarray*}
P(re^{i\theta},e^{i\varphi})
\lesssim\frac{(1-r)(1-r+|\sin(\theta-\varphi)/2|)^{-2}}{(1-r+|\sin\theta|+|\sin(\theta-\varphi)/2|)^{2\lambda}}
\ln\Big(\frac{|\sin(\theta-\varphi)/2|}{1-r}+2\Big).
\end{eqnarray*}
\end{lemma}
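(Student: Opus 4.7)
The plan is to derive this boundary pointwise estimate of the $\lambda$-Poisson kernel by reducing it to the Cauchy kernel bound of Lemma \ref{Cauchy-estimate-a}. The key observation is the factorization
\[
P(z,w)=\frac{1-|z|^2|w|^2}{|1-z\bar{w}|^2}\,P_0(z,w),
\]
together with the identity $|P_0(z,w)|=|1-z\bar{w}|\,|C(z,w)|$ that follows from (\ref{Cauchy-kernel-2-2}). Since $P$ is already known to be nonnegative (via Proposition \ref{Poi-property-1}(iii) and the fact that it kernels the Poisson integral of arbitrary nonnegative data), I can replace $P$ by $|P|$ without loss and plug in the bound of Lemma \ref{Cauchy-estimate-a} to obtain
\[
P(z,w)\;\lesssim\;\frac{1-|z|^2|w|^2}{|1-z\bar{w}|^{2}\,(|1-z\bar{w}|+|1-zw|)^{2\lambda}}\,\ln\!\left(\frac{|1-z\bar{w}|^2}{|1-zw|^2}+2\right),
\]
valid whenever $|zw|<1$, in particular for $w=e^{i\varphi}$ and $z=re^{i\theta}\in\DD$.

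Next I would specialize to $w=e^{i\varphi}$, $z=re^{i\theta}$ and translate each factor into boundary coordinates. From the identities $|1-z\bar{w}|^{2}=(1-r)^{2}+4r\sin^{2}((\theta-\varphi)/2)$ and similarly for $|1-zw|^{2}$, one has
\[
|1-z\bar{w}|\asymp 1-r+|\sin(\theta-\varphi)/2|,\qquad |1-zw|\asymp 1-r+|\sin(\theta+\varphi)/2|,
\]
while $1-|z|^{2}|w|^{2}=1-r^{2}\asymp 1-r$. To recast $(|1-z\bar{w}|+|1-zw|)^{2\lambda}$ in the desired form, I would apply the elementary inequality $|\sin(a+b)|\le|\sin a|+|\sin b|$ in both directions with $a=(\theta-\varphi)/2$, $b=(\theta+\varphi)/2$ (so that $a+b=\theta$), yielding
\[
|1-z\bar{w}|+|1-zw|\;\asymp\;1-r+|\sin\theta|+|\sin(\theta-\varphi)/2|.
\]

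The step that requires the most care is the logarithmic factor. Here I would exploit the fact that $w\in\partial\DD$ forces $|1-zw|\ge 1-|zw|=1-r$, so that
\[
\frac{|1-z\bar{w}|^{2}}{|1-zw|^{2}}+2\;\lesssim\;1+\left(\frac{|\sin(\theta-\varphi)/2|}{1-r}\right)^{\!2},
\]
and then the elementary bound $\ln(1+t^{2})\le 2\ln(1+t)\le 2\ln(t+2)$ converts this into $\ln\!\bigl(|\sin(\theta-\varphi)/2|/(1-r)+2\bigr)$. Assembling the pieces gives exactly the stated estimate. The genuinely nontrivial step is this conversion of the Cauchy-type logarithm into the boundary-type one, which is where the restriction $|w|=1$ enters in an essential way; everything else is routine trigonometric bookkeeping.
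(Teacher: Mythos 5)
Your derivation is correct, but note that this paper does not prove Lemma \ref{Poisson-kernel-estimate-a} at all: it is imported verbatim as \cite[Corollary 4.3]{LL1}, just as Lemma \ref{Cauchy-estimate-a} is imported as \cite[Theorem 4.2]{LL1}. What you have done is reconstruct the passage from the quoted Cauchy bound to the quoted Poisson bound, which is presumably how the corollary is obtained in the reference. The steps all check out: from (\ref{Cauchy-kernel-2-2}) one has $|P_{0}(z,w)|=|1-z\bar{w}|\,|C(z,w)|$, hence $P(z,w)=\frac{1-|z|^{2}|w|^{2}}{|1-z\bar{w}|}\,|C(z,w)|$, and combining with Lemma \ref{Cauchy-estimate-a} gives the intermediate bound you state; the equivalences $|1-z\bar{w}|\asymp 1-r+|\sin(\theta-\varphi)/2|$, $|1-zw|\asymp 1-r+|\sin(\theta+\varphi)/2|$ and the two-sided use of $|\sin(a+b)|\le|\sin a|+|\sin b|$ with $a+b=\theta$ correctly convert $|1-z\bar{w}|+|1-zw|$ into $1-r+|\sin\theta|+|\sin(\theta-\varphi)/2|$; and the conversion of the logarithm via $|1-zw|\ge 1-|zw|=1-r$ is legitimate because $\ln(t+2)\ge\ln 2>0$, so multiplicative constants inside the logarithm can be absorbed. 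Two cosmetic remarks: you do not actually need nonnegativity of $P$ (an upper bound on $|P|$ suffices, and the factorization already gives $|P|$ directly), so the appeal to Proposition \ref{Poi-property-1}(iii) is superfluous; and the equivalence $|1-re^{i\alpha}|\asymp 1-r+|\sin(\alpha/2)|$ deserves the one-line check that the factor $\sqrt{r}$ in $\sqrt{(1-r)^{2}+4r\sin^{2}(\alpha/2)}$ is harmless for $r$ bounded away from $1$. Your identification of the logarithm conversion as the step where $|w|=1$ enters essentially is accurate.
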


\begin{theorem} \label{harmonic-point-evaluation-a}
Let $u(x,y)$ be a $\lambda$-harmonic function in $\DD$ satisfying (\ref{harmonic-Lp-condition-1}) for $1\le p<\infty$.
Then
\begin{eqnarray}\label{harmonic-point-evaluation-1}
|u(x,y)|\lesssim\frac{(1-|z|)^{-1/p}}{|1-z^2|^{2\lambda/p}}C_p(u)^{1/p},\qquad z=x+iy\in\DD.
\end{eqnarray}
\end{theorem}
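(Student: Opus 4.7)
The plan is to represent $u$ as a $\lambda$-Poisson integral, derive a sharp $L^{\infty}$ bound on the Poisson kernel in the boundary variable, and combine via H\"older's inequality. By Lemma \ref{Poi-charact}, for $1<p<\infty$ we may write $u(z)=P(f;z)$ with $\|f\|_{L_{\lambda}^{p}(\partial\DD)}=C_{p}(u)^{1/p}$, and for $p=1$ we may write $u(z)=P(d\nu;z)$ with $\|d\nu\|_{{\frak B}_{\lambda}(\partial\DD)}=C_{1}(u)$.

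The core of the argument is the sup estimate
\[
\sup_{\varphi\in[-\pi,\pi]}P(re^{i\theta},e^{i\varphi})\lesssim\frac{(1-r)^{-1}}{(1-r+|\sin\theta|)^{2\lambda}}\asymp\frac{(1-r)^{-1}}{|1-z^{2}|^{2\lambda}}.
\]
To prove it I would start from Lemma \ref{Poisson-kernel-estimate-a}, set $a=1-r$, $b=|\sin\theta|$, $s=|\sin(\theta-\varphi)/2|$, and use $(a+b+s)^{-2\lambda}\le(a+b)^{-2\lambda}$ to reduce the supremum in $\varphi$ to that of $a(a+s)^{-2}\ln(s/a+2)$ over $s\ge 0$. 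Substituting $t=s/a$ turns the remaining task into maximizing $(1+t)^{-2}\ln(t+2)$ on $[0,\infty)$, which a one-line derivative check shows is attained at $t=0$. The asymptotic $|1-z^{2}|\asymp(1-r)+|\sin\theta|$ then follows from $|1-z^{2}|^{2}=(1-r^{2})^{2}+4r^{2}\sin^{2}\theta$.

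To close the proof, for $p=1$ the nonnegativity of $P$ (Proposition \ref{Poi-property-1}(iii)) yields $|u(z)|\le\sup_{\varphi}P(z,e^{i\varphi})\,\|d\nu\|_{{\frak B}_{\lambda}(\partial\DD)}$, and (\ref{harmonic-point-evaluation-1}) follows at once. For $1<p<\infty$, I would use that $\phi_{0}\equiv 1$ is $\lambda$-harmonic, so $\int_{-\pi}^{\pi}P(z,e^{i\varphi})\,dm_{\lambda}(\varphi)=P(1;z)=1$; splitting $P=P^{1/p}\cdot P^{1/p'}$ and applying H\"older to $|u(z)|\le\int|f(e^{i\varphi})|P(z,e^{i\varphi})\,dm_{\lambda}(\varphi)$ gives
\[
|u(z)|\le\Bigl(\int|f|^{p}P\,dm_{\lambda}\Bigr)^{1/p}\Bigl(\int P\,dm_{\lambda}\Bigr)^{1/p'}\le\|P(z,\cdot)\|_{L^{\infty}(\partial\DD)}^{1/p}\|f\|_{L_{\lambda}^{p}(\partial\DD)},
\]
and the kernel bound finishes the estimate.

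The main obstacle is the $L^{\infty}$ bound on the Poisson kernel; once the logarithmic factor is properly isolated via the substitution $t=s/a$ the extremum problem becomes an elementary calculus exercise, and the $r$- and $\theta$-dependent weights separate cleanly into the two factors $(1-r)^{-1}$ and $|1-z^{2}|^{-2\lambda}$ appearing in (\ref{harmonic-point-evaluation-1}).
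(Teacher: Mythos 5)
Your proposal is correct and follows essentially the same route as the paper: represent $u$ via Lemma \ref{Poi-charact} as a $\lambda$-Poisson integral, bound $\sup_{\varphi}P(re^{i\theta},e^{i\varphi})$ by $(1-r)^{-1}(1-r+|\sin\theta|)^{-2\lambda}$ using Lemma \ref{Poisson-kernel-estimate-a}, and conclude with H\"older together with $\|P(z,\cdot)\|_{L^{1}_{\lambda}(\partial\DD)}=1$ and $|1-z^{2}|\asymp 1-r+|\sin\theta|$. Your split $P=P^{1/p}P^{1/p'}$ is just a rephrasing of the paper's estimate $\|P(z,\cdot)\|_{L^{p'}_{\lambda}(\partial\DD)}\le\sup_{\varphi}P(z,e^{i\varphi})^{1/p}$, and your explicit maximization of $(1+t)^{-2}\ln(t+2)$ merely fills in the elementary step the paper leaves implicit.
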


\begin{proof}
We first consider the case for $1<p<\infty$. By Lemma \ref{Poi-charact}(i), $u(x,y)$ is the $\lambda$-Poisson
integral $P(f;z)$ with $z=x+iy$ of some $f\in L_{\lambda}^{p}(\partial\DD)$. From (\ref{Poisson-integral-1}), H\"older's inequality gives
\begin{eqnarray} \label{harmonic-point-evaluation-2}
|u(x,y)|\le\|f\|_{L_{\lambda}^{p}(\partial\DD)}\|P(z,\cdot)\|_{L_{\lambda}^{p'}(\partial\DD)},\qquad z=x+iy\in\DD.
\end{eqnarray}
On account of the fact $\|P(z,\cdot)\|_{L_{\lambda}^{1}(\partial\DD)}=1$, one has
\begin{align}\label{Poisson-kernel-norm-1}
\|P(z,\cdot)\|_{L_{\lambda}^{p'}(\partial\DD)}\le
\sup_{\varphi}P(z,e^{i\varphi})^{1/p},\qquad z=re^{i\theta}\in\DD,
\end{align}
but by Lemma \ref{Poisson-kernel-estimate-a},
\begin{eqnarray}\label{Poisson-kernel-estimate-2}
P(re^{i\theta},e^{i\varphi})
\lesssim\frac{(1-r)^{-1}}{(1-r+|\sin\theta|)^{2\lambda}},\qquad \theta,\varphi\in[-\pi,\pi],\,\,r\in[0,1).
\end{eqnarray}
Combining (\ref{harmonic-point-evaluation-2}), (\ref{Poisson-kernel-norm-1}) and (\ref{Poisson-kernel-estimate-2}) yields
\begin{eqnarray*}
|u(x,y)|\lesssim\frac{(1-r)^{-1/p}}{(1-r+|\sin\theta|)^{2\lambda/p}}\|f\|_{L_{\lambda}^{p}(\partial\DD)},\qquad z=x+iy=re^{i\theta}\in\DD,
\end{eqnarray*}
that is identical with (\ref{harmonic-point-evaluation-1}), in view of the fact $|1-z^2|\asymp 1-r+|\sin\theta|$ for $z=re^{i\theta}\in\DD$.

For $p=1$, Lemma \ref{Poi-charact}(ii) implies existence of  a measure $d\nu\in {\frak B}_{\lambda}(\partial\DD)$ so that $u(x,y)$ is its $\lambda$-Poisson
integral $P(d\nu;z)$ with $z=x+iy$. It follows from (\ref{Poisson-Borel-1}) that
$|u(x,y)|\le\|d\nu\|_{{\frak B_{\lambda}}}\sup_{\varphi}P(z,e^{i\varphi})$,
and then, appealing to (\ref{Poisson-kernel-estimate-2}) proves (\ref{harmonic-point-evaluation-1}) for $p=1$.
\end{proof}

We now come to the point estimates of functions in $H_{\lambda}^p(\DD)$ and $A_{\lambda}^p(\DD)$.

\begin{theorem} \label{Hardy-point-evaluation-a}
Let $p\ge p_0$ and $f\in H_{\lambda}^p(\DD)$. Then
\begin{eqnarray}\label{Hardy-point-evaluation-1}
|f(z)|\lesssim\frac{(1-|z|)^{-1/p}}{|1-z^2|^{2\lambda/p}}\|f\|_{H_{\lambda}^{p}},\qquad z\in\DD.
\end{eqnarray}
\end{theorem}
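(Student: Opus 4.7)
The plan is to combine the $\lambda$-harmonic majorization of Theorem \ref{majorization-a} with the point estimate for $\lambda$-harmonic functions from Theorem \ref{harmonic-point-evaluation-a}. Since both theorems are already established, the proof essentially reduces to bookkeeping of exponents, split into the two cases $p>p_0$ and $p=p_0$.

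For $p>p_0$, I would invoke Theorem \ref{majorization-a}(i) to produce a nonnegative $g\in L^{p/p_0}_\lambda(\partial\DD)$ with $|f(z)|^{p_0}\le P(g;z)$ and $\|g\|_{L^{p/p_0}_\lambda(\partial\DD)}\asymp\|f\|^{p_0}_{H^p_\lambda}$. The function $u(z):=P(g;z)$ is $\lambda$-harmonic in $\DD$, and by Proposition \ref{Poi-property-1}(iv) it satisfies the bound (\ref{harmonic-Lp-condition-1}) with exponent $q:=p/p_0\ge1$ and $C_q(u)^{1/q}\le\|g\|_{L^q_\lambda(\partial\DD)}$. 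Applying Theorem \ref{harmonic-point-evaluation-a} to $u$ with exponent $q$ gives
\begin{eqnarray*}
|f(z)|^{p_0}\le P(g;z)\lesssim\frac{(1-|z|)^{-1/q}}{|1-z^2|^{2\lambda/q}}\,\|g\|_{L^q_\lambda(\partial\DD)},\qquad z\in\DD.
\end{eqnarray*}
Taking the $p_0$-th root and using $p_0 q=p$ together with $\|g\|^{1/p_0}_{L^q_\lambda(\partial\DD)}\lesssim\|f\|_{H^p_\lambda}$ yields (\ref{Hardy-point-evaluation-1}).

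For the endpoint $p=p_0$, I would instead use Theorem \ref{majorization-a}(ii) to obtain $d\nu\in{\frak B}_\lambda(\partial\DD)$ with $|f(z)|^{p_0}\le P(d\nu;z)$ and $\|d\nu\|_{{\frak B}_\lambda(\partial\DD)}\asymp\|f\|^{p_0}_{H^{p_0}_\lambda}$. The function $u(z):=P(d\nu;z)$ is $\lambda$-harmonic in $\DD$; by Proposition \ref{Poi-property-1}(vi) one has $C_1(u)\le\|d\nu\|_{{\frak B}_\lambda(\partial\DD)}$ (this is actually an equality by Lemma \ref{Poi-charact}(iii)). Theorem \ref{harmonic-point-evaluation-a} with $p=1$ then gives
\begin{eqnarray*}
|f(z)|^{p_0}\le P(d\nu;z)\lesssim\frac{(1-|z|)^{-1}}{|1-z^2|^{2\lambda}}\,\|d\nu\|_{{\frak B}_\lambda(\partial\DD)},
\end{eqnarray*}
and extracting the $p_0$-th root completes the proof for this case.

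There is no genuine analytic obstacle here since all the deep content (the majorization and the harmonic point estimate) has already been proved. The only point of care is matching the exponents: one must observe that the harmonic estimate is applied with exponent $p/p_0$ (rather than $p$) because the majorant controls the $p_0$-th power of $|f|$, and that the factor $(1-|z|)^{-1/(p_0 q)}=(1-|z|)^{-1/p}$ comes out exactly right. A secondary detail worth double-checking is that the constants from Theorem \ref{majorization-a} (the factor $2^{2-p_0}$) and the implicit constant in Theorem \ref{harmonic-point-evaluation-a} combine into a single constant independent of $f$ and $z$, which is immediate.
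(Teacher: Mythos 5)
Your proposal is correct and follows essentially the same route as the paper: both combine the $\lambda$-harmonic majorization of Theorem \ref{majorization-a} with the point estimate of Theorem \ref{harmonic-point-evaluation-a} applied at exponent $p/p_0$ (respectively at $p=1$ via the measure case for the endpoint $p=p_0$), and then extract the $p_0$-th root. The exponent bookkeeping and the handling of the constants match the paper's argument.
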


\begin{proof}
For $p\ge p_0$ and $f\in H_{\lambda}^p(\DD)$, let $u(x,y)$ be the $\lambda$-harmonic majorization of $f$ given in Theorem \ref{majorization-a}. Thus
\begin{eqnarray}\label{har-majorization-3}
|f(z)|^{p_0}\leq u(x,y),\qquad z=x+iy\in\DD,
\end{eqnarray}
where for $p >p_0$, $u(x,y)$ is the $\lambda$-Poisson
integral $P(g;z)$ with $z=x+iy$ of some nonnegative function $g\in L_{\lambda}^{p/p_0}(\partial\DD)$ satisfying $\|g\|_{L_{\lambda}^{p/p_0}(\partial\DD)}\asymp\|f\|^{p_0}_{H^{p}_{\lambda}}$,
and for $p=p_0$, the $\lambda$-Poisson integral $P(d\nu;z)$ with $z=x+iy$ of some nonnegative measure $d\nu\in {\frak B}_{\lambda}(\partial\DD)$ satisfying $\|d\nu\|_{{\frak B}_{\lambda}(\partial\DD)}\asymp\|f\|^{p_0}_{H^{p_0}_{\lambda}}$.
Obviously by Lemma \ref{Poi-charact}(iii),
\begin{align*}
C_{p/p_0}(u)=\|g\|^{p/p_0}_{L_{\lambda}^{p/p_0}(\partial\DD)}\asymp\|f\|^{p}_{H^{p}_{\lambda}}<\infty
\end{align*}
for $p>p_0$, and $C_1(u)=\|d\nu\|_{{\frak B_{\lambda}(\partial\DD)}}\asymp\|f\|^{p_0}_{H^{p_0}_{\lambda}}<\infty$ for $p=p_0$.
Now by Theorem \ref{harmonic-point-evaluation-a},
\begin{eqnarray*}
|u(x,y)|\lesssim\frac{(1-|z|)^{-p_0/p}}{|1-z^2|^{2\lambda p_0/p}}\|f\|^{p_0}_{H^{p}_{\lambda}},\qquad z=x+iy\in\DD.
\end{eqnarray*}
Combining this with (\ref{har-majorization-3}) proves (\ref{Hardy-point-evaluation-1}).
\end{proof}

\begin{theorem} \label{Bergman-point-evaluation-a}
Let $p\ge p_0$ and $f\in A_{\lambda}^p(\DD)$. Then
\begin{eqnarray}\label{Bergman-point-evaluation-1}
|f(z)|\lesssim\frac{(1-|z|)^{-2/p}}{|1-z^2|^{2\lambda/p}}\|f\|_{A_{\lambda}^{p}},\qquad z\in\DD.
\end{eqnarray}
\end{theorem}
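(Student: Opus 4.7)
The plan is to reduce the claim to the Hardy-space point-evaluation of Theorem \ref{Hardy-point-evaluation-a} via a dilation argument. Given $z_0 \in \DD$, I would set $r = (1 + |z_0|)/2$ and $z = z_0/r$, and define $g(w) = f(rw)$. Then $g$ is $\lambda$-analytic on a neighborhood of $\overline{\DD}$, so $g \in H_\lambda^p(\DD)$; the auxiliary point $z$ lies inside $\DD$ with $1 - |z| \asymp 1 - r \asymp 1 - |z_0|$.

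The proof then chains three ingredients already in hand. First, by Lemma \ref{monotonicity-integral-mean-2-a} applied to $g$, one has $\|g\|_{H_\lambda^p} = \sup_{0 \le s < 1} M_p(f;rs) \lesssim M_p(f;r)$. Second, by Proposition \ref{Bergman-mean-estimate-a}, $M_p(f;r) \lesssim (1-r)^{-1/p}\|f\|_{A_\lambda^p}$. Third, Theorem \ref{Hardy-point-evaluation-a} applied to $g$ at $z$ gives
$$
|f(z_0)| = |g(z)| \lesssim \frac{(1-|z|)^{-1/p}}{|1-z^2|^{2\lambda/p}}\,\|g\|_{H_\lambda^p}.
$$
Stringing these inequalities together and using $1-|z| \asymp 1-r \asymp 1-|z_0|$ converts the product $(1-|z|)^{-1/p}(1-r)^{-1/p}$ into $(1-|z_0|)^{-2/p}$, which supplies the numerator in (\ref{Bergman-point-evaluation-1}).

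The main obstacle, and the only nontrivial geometric point, is replacing $|1-z^2|$ by $|1-z_0^2|$ in the denominator. Since $z^2 = z_0^2/r^2$ and $r \le 1$, this reduces to showing $|r^2 - z_0^2| \gtrsim |1-z_0^2|$ for the specific choice $r = (1+|z_0|)/2$. Writing $z_0 = r_0 e^{i\theta_0}$ with $r_0 = |z_0|$, a direct expansion gives
$$
|r^2 - z_0^2|^2 = (r^2 - r_0^2)^2 + 4r^2 r_0^2 \sin^2\theta_0, \qquad |1-z_0^2|^2 = (1-r_0^2)^2 + 4 r_0^2 \sin^2\theta_0.
$$
The factorization $r^2 - r_0^2 = (1-r_0)(1+3r_0)/4$ yields $r^2 - r_0^2 \ge (1-r_0^2)/4$, while $r \ge 1/2$ forces $r^2 \ge 1/4$. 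Each of the two summands on the left therefore dominates a fixed multiple of the matching summand on the right, producing $|r^2 - z_0^2|^2 \ge |1-z_0^2|^2/16$. Combined with the three-step chain above, this yields (\ref{Bergman-point-evaluation-1}).
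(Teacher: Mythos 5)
Your proposal is correct and follows essentially the same route as the paper: both reduce the claim to Theorem \ref{Hardy-point-evaluation-a} by dilation, using the chain $\|f_\rho\|_{H_\lambda^p}\lesssim M_p(f;\rho)\lesssim(1-\rho)^{-1/p}\|f\|_{A_\lambda^p}$ from Lemma \ref{monotonicity-integral-mean-2-a} and Proposition \ref{Bergman-mean-estimate-a}. The only cosmetic difference is the dilation parameter (the paper takes $\rho=s=\sqrt{r}$ and invokes $|1-z^2|\asymp 1-r+|\sin\theta|\asymp|1-re^{2i\theta}|$, while you take $r=(1+|z_0|)/2$ and verify $|r^2-z_0^2|\gtrsim|1-z_0^2|$ directly), and your computation of that step checks out.
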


\begin{proof}
For $0<\rho<1$ set $f_{\rho}(z)=f(\rho z)$. We apply Theorem \ref{Hardy-point-evaluation-a} to $f_{\rho}$ to get
\begin{eqnarray*}
|f(\rho se^{i\theta})|\lesssim\frac{(1-s)^{-1/p}}{|1-s^2e^{2i\theta}|^{2\lambda/p}}\|f_{\rho}\|_{H_{\lambda}^{p}},\qquad \rho,s\in[0,1),
\end{eqnarray*}
but from (\ref{dilation-integral-mean-1}) and Proposition \ref{Bergman-mean-estimate-a},
$$
\|f_{\rho}\|_{H_{\lambda}^{p}}\lesssim M_p(f;\rho)\lesssim(1-\rho)^{-1/p}\|f\|_{A_{\lambda}^{p}}.
$$
Combining the above two estimates and letting $\rho=s=r^{1/2}$ for $r\in[0,1)$, we have
\begin{eqnarray*}
|f(z)|\lesssim\frac{(1-r^{1/2})^{-2/p}}{|1-re^{2i\theta}|^{2\lambda/p}}\|f\|_{A_{\lambda}^{p}},\qquad z=re^{i\theta}\in\DD,
\end{eqnarray*}
that is identical with (\ref{Bergman-point-evaluation-1}), in view of the fact $|1-z^2|\asymp 1-r+|\sin\theta|\asymp|1-re^{2i\theta}|$ for $z=re^{i\theta}\in\DD$.
\end{proof}

For the point estimates of functions in the usual Hardy spaces, see \cite[p. 36]{Du2}, and in the usual Bergman spaces, see \cite{Vu1} and also \cite[p. 79]{DS1}.

\section{Density, completeness, and duality of the $\lambda$-Hardy and $\lambda$-Bergman spaces}

\subsection{Density of $\lambda$-analytic polynomials}

For a $\lambda$-analytic function $f$ on $\DD$, we denote by $S_n:=S^f_n(z)$ the $n$th partial sum of the series (\ref{anal-series-1}).

\begin{lemma}\label{partial-sum-converge-a}
If $f(z)$ is $\lambda$-analytic in $\DD$, then for fixed $0<r_0<1$, $S_n(z)$ converges uniformly to $f(z)$ on $\overline{\DD}_{r_0}$ as $n\rightarrow\infty$, where $\DD_{r_0}=\{z:\, |z|<r_0\}$.
\end{lemma}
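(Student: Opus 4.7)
The plan is straightforward and leans entirely on two ingredients already established in the excerpt: the pointwise size estimate (\ref{phi-bound-1}) for $\phi_n$, and the coefficient/tail control asserted in the final sentence of Proposition \ref{anal-thm}(iii).

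First I would write the tail as
\begin{align*}
f(z) - S_n(z) = \sum_{k=n+1}^{\infty} c_k \phi_k(z), \qquad z \in \overline{\DD}_{r_0},
\end{align*}
and use (\ref{phi-bound-1}) to get $|\phi_k(z)| \lesssim (k+1)^{\lambda} r_0^k$ uniformly for $z \in \overline{\DD}_{r_0}$. This reduces the question of uniform convergence on $\overline{\DD}_{r_0}$ to the assertion that the numerical tail
\begin{align*}
\sum_{k=n+1}^{\infty} (k+1)^{\lambda} |c_k| r_0^k
\end{align*}
tends to $0$ as $n \to \infty$.

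Next I would invoke Proposition \ref{anal-thm}(iii) with the choice $\gamma = \lambda$: the series $\sum_{k \geq 1} k^{\lambda} |c_k| r^k$ converges uniformly on every compact subset of $[0,1)$, hence in particular it converges at $r = r_0$. Convergence of a series of nonnegative terms forces its tail to vanish, which is exactly what is needed. Combining the two estimates yields
\begin{align*}
\sup_{z \in \overline{\DD}_{r_0}} |f(z) - S_n(z)| \lesssim \sum_{k=n+1}^{\infty} (k+1)^{\lambda} |c_k| r_0^k \longrightarrow 0,
\end{align*}
which is the desired uniform convergence.

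There is no real obstacle here; the only subtlety worth mentioning is that Proposition \ref{anal-thm}(iii) is phrased with $n^\gamma$ rather than $(n+1)^\lambda$, but this is a cosmetic difference since $k \asymp k+1$ for $k \ge 1$, and the $k=n+1$ term in the tail is handled by taking $n \ge 1$. So the proof essentially collapses to an application of the two cited facts.
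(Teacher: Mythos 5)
Your argument is correct and coincides with the paper's own proof: both bound the tail $\sum_{k>n}c_k\phi_k(z)$ on $\overline{\DD}_{r_0}$ via the estimate (\ref{phi-bound-1}) and then invoke the last assertion of Proposition \ref{anal-thm}(iii) to make the resulting numerical tail $\sum_{k>n}k^{\lambda}|c_k|r_0^k$ vanish. No differences worth noting.
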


Indeed, by (\ref{phi-bound-1}) and Proposition \ref{anal-thm}(iii), $\sup_{z\in\overline{\DD}_{r_0}}|S_n(z)-f(z)|$ is controlled by a multiple of
$\sum_{k=n+1}^{\infty}n^{\lambda}|c_n|r_0^n$, that tends to zero as $n\rightarrow\infty$.

\begin{theorem}\label{Hardy-density-a}
Assume that $p_0<p<\infty$. Then the set of $\lambda$-analytic polynomials is dense in the $\lambda$-Hardy space $H_{\lambda}^p(\DD)$; and in particular, the system $\{\phi_n(z): \,n\in\NN_0\}$
is an orthonormal basis of the Hilbert space $H_{\lambda}^2(\DD)$.
\end{theorem}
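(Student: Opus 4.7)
The plan is the classical dilation argument, adapted to the $\lambda$-analytic setting. Given $f \in H_\lambda^p(\DD)$, for each $r \in (0,1)$ I introduce the dilate $f_r(z) = f(rz)$, which is $\lambda$-analytic on the strictly larger disk $\DD_{1/r}$. I will show (a) $f_r \to f$ in the $H_\lambda^p$ (quasi-)norm as $r \to 1-$, and (b) each $f_r$ is approximated in the $H_\lambda^p$ norm by its partial sums $S_n^{f_r}$, which are $\lambda$-analytic polynomials. A triangle inequality (for $p \ge 1$) or the $p$-subadditive inequality $\|a+b\|^p_{H_\lambda^p} \le \|a\|^p_{H_\lambda^p} + \|b\|^p_{H_\lambda^p}$ (for $p_0 < p < 1$) then yields density.

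For (a), since $f_r$ is bounded on $\overline{\DD}$, the difference $f-f_r$ is a $\lambda$-analytic element of $H_\lambda^p(\DD)$ whose boundary value equals $f(e^{i\theta})-f(re^{i\theta})$. Hence by Theorem \ref{Hardy-boundary-value-a},
\[
\|f-f_r\|_{H_\lambda^p}^p \asymp \int_{-\pi}^{\pi}|f(e^{i\theta})-f(re^{i\theta})|^p\,dm_\lambda(\theta),
\]
and (\ref{Hp-norm-convergence-1}) forces the right-hand side to $0$ as $r \to 1-$. For (b), fix $r \in (0,1)$ and choose $r_0 \in (1, 1/r)$; Lemma \ref{partial-sum-converge-a} applied to $f_r$ on $\DD_{1/r}$ yields $S_n^{f_r} \to f_r$ uniformly on $\overline{\DD}_{r_0}$, in particular uniformly on $\partial\DD$. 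Because $dm_\lambda$ is a probability measure on $\partial\DD$, uniform convergence descends to $L_\lambda^p(\partial\DD)$ convergence, which via Theorem \ref{Hardy-boundary-value-a} is the same as $H_\lambda^p$ convergence. Note that $S_n^{f_r}$ is indeed a $\lambda$-analytic polynomial, because the expansion of $f_r$ in the system $\{\phi_n\}$ is simply $\sum_n c_n r^n \phi_n(z)$ when $f = \sum_n c_n \phi_n$.

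Finally, the orthonormal basis statement for $H_\lambda^2(\DD)$ is immediate: by Theorem \ref{Hp-Lp-thm-1}, $\|\cdot\|_{H_\lambda^2} = \|\cdot\|_{L_\lambda^2(\partial\DD)}$ via boundary values, so orthonormality of $\{\phi_n(e^{i\theta})\}_{n\in\NN_0}$ in $L_\lambda^2(\partial\DD)$ (part of Proposition \ref{orthonirmal-basis-a}) lifts to orthonormality of $\{\phi_n\}_{n\in\NN_0}$ in $H_\lambda^2(\DD)$, and the density just proved makes this a complete orthonormal system. I anticipate no genuine obstacle; the only care needed is to track the quasi-norm versus norm distinction when $p_0 < p < 1$, which is entirely absorbed into the norm equivalence supplied by Theorem \ref{Hardy-boundary-value-a}.
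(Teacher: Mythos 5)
Your proof is correct and follows essentially the same route as the paper: dilation $f_r(z)=f(rz)$, with $\|f_r-f\|_{H_\lambda^p}\to0$ supplied by Theorem \ref{Hardy-boundary-value-a} (via (\ref{Hp-norm-convergence-1})) and the approximation of $f_r$ by partial sums supplied by Lemma \ref{partial-sum-converge-a}. The only cosmetic difference is that the paper works with $(S_n^f)_r$ on $\overline{\DD}_r\subset\DD$ rather than rescaling the lemma to the larger disk $\DD_{1/r}$, which amounts to the same estimate.
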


For $f\in H_{\lambda}^p(\DD)$ and $s\in(0,1)$, let $f_s(z)=f(sz)$. By Theorem \ref{Hardy-boundary-value-a} $\|f_s-f\|_{H^{p}_{\lambda}}\rightarrow0$ as $s\rightarrow1-$, and by Lemma \ref{partial-sum-converge-a}, for fixed $s\in(0,1)$ $\|(S_n)_s-f_s\|_{H^{p}_{\lambda}}\rightarrow0$ as $n\rightarrow\infty$. Thus the density in Theorem \ref{Hardy-density-a} is concluded.

\begin{theorem} \label{Bergman-density-a}
Assume that $p_0<p<\infty$. Then the set of $\lambda$-analytic polynomials is dense in the $\lambda$-Bergman space $A^{p}_{\lambda}(\DD)$; and in particular, the system
$\big\{\big(\frac{n+\lambda+1}{\lambda+1}\big)^{1/2}\phi_{n}^{\lambda}(z): \,\, n\in\NN_0\big\}$
is an orthonormal basis of the Hilbert space $A^{2}_{\lambda}(\DD)$.
\end{theorem}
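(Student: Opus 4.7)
The plan is to follow the template of Theorem \ref{Hardy-density-a}. Given $f\in A^p_\lambda(\DD)$, I introduce the dilations $f_s(z)=f(sz)$ for $s\in(0,1)$ and aim to establish (i) $\|f_s-f\|_{A^p_\lambda}\to0$ as $s\to1-$, and (ii) for each fixed $s$, the partial sums $(S^f_n)_s$ converge to $f_s$ in $A^p_\lambda(\DD)$ as $n\to\infty$. Combining these with the triangle inequality (or its $p$-analog when $p_0<p<1$) will yield density of $\lambda$-analytic polynomials in $A^p_\lambda(\DD)$.

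Step (ii) comes essentially for free: for fixed $s\in(0,1)$, the function $f_s$ extends $\lambda$-analytically to the larger disk $\DD_{1/s}$, so Lemma \ref{partial-sum-converge-a} applied to $f_s$ on $\DD_{r_0}$ with $1<r_0<1/s$ gives $(S^f_n)_s\to f_s$ uniformly on $\overline{\DD}$; since $\sigma_\lambda(\DD)=1$, convergence in $A^p_\lambda$ follows at once.

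The main obstacle is step (i), since $A^p_\lambda$-continuity of dilations is not among the results quoted above. I would prove it by $C_c(\DD)$-approximation. The change of variable $w=sz$ gives $d\sigma_\lambda(z)=s^{-2\lambda-2}d\sigma_\lambda(w)$ on $\DD_s$, and hence the uniform bound $\|h_s\|_{L^p_\lambda(\DD)}\le s^{-(2\lambda+2)/p}\|h\|_{L^p_\lambda(\DD)}$ for any measurable $h$ on $\DD$. Given $\varepsilon>0$, I would pick $g\in C_c(\DD)$ with $\|f-g\|_{L^p_\lambda}<\varepsilon$ and split
$$\|f_s-f\|_X\le\|f_s-g_s\|_X+\|g_s-g\|_X+\|g-f\|_X,$$
where $\|\cdot\|_X=\|\cdot\|_{L^p_\lambda}$ when $p\ge1$ and $\|\cdot\|_X=\|\cdot\|^p_{L^p_\lambda}$ when $p_0<p<1$. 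The outer two terms are controlled uniformly in $s$ by the change-of-variable estimate and the choice of $g$, while the middle term tends to zero as $s\to1-$ since $g$ is uniformly continuous with compact support in $\DD$ and $\sigma_\lambda(\DD)=1$.

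For the orthonormal basis assertion, I would pass to polar coordinates and use the homogeneity $\phi_n(re^{i\theta})=r^n\phi_n(e^{i\theta})$, evident from the defining series, together with the orthonormality of $\{\phi_n\}$ on $\partial\DD$ given in Proposition \ref{orthonirmal-basis-a}, to compute
$$\int_{\DD}\phi_n(w)\overline{\phi_m(w)}\,d\sigma_\lambda(w)=\frac{\lambda+1}{n+\lambda+1}\,\delta_{nm}.$$
This orthogonality, together with the density of $\lambda$-analytic polynomials in $A^2_\lambda(\DD)$ established above, yields the required basis property in the Hilbert space $A^2_\lambda(\DD)$.
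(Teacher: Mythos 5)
Your argument is correct, and its two outer layers (the partial-sum step and the orthogonality computation giving $\int_{\DD}\phi_n\overline{\phi_m}\,d\sigma_\lambda=\frac{\lambda+1}{n+\lambda+1}\delta_{nm}$) coincide with what the paper does. The genuine divergence is in the key step, the continuity of dilations $\|f_s-f\|_{A^p_\lambda}\to0$. The paper proves this from \emph{inside} the $\lambda$-analytic theory: for each fixed $r<1$ it applies the boundary-value convergence (\ref{Hp-norm-convergence-1}) of Theorem \ref{Hardy-boundary-value-a} to get $M_p(f_s-f;r)\to0$ on each circle, dominates $M_p(f_s-f;r)\lesssim M_p(f;r)$ via the quasi-monotonicity of $p$-means in Lemma \ref{monotonicity-integral-mean-2-a}, and concludes by dominated convergence in $r$. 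You instead prove it by pure measure theory: density of $C_c(\DD)$ in $L^p_\lambda(\DD)$, the scaling bound $\|h_s\|_{L^p_\lambda(\DD)}\le s^{-(2\lambda+2)/p}\|h\|_{L^p_\lambda(\DD)}$ coming from the homogeneity of $|y|^{2\lambda}dxdy$, and uniform continuity of the compactly supported approximant. Your route is more elementary and self-contained — it makes no use of the Hardy-space majorization/boundary-value machinery and would in fact work for all $0<p<\infty$, not just $p>p_0$ — at the cost of not exploiting structure already established; the paper's route is shorter given Theorem \ref{Hardy-boundary-value-a} and keeps the Hardy and Bergman density proofs visibly parallel. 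One cosmetic repair: Lemma \ref{partial-sum-converge-a} is stated only for radii $r_0<1$, so rather than invoking it on $\DD_{r_0}$ with $r_0>1$ for the extension of $f_s$, it is cleaner to note that $\sup_{z\in\overline{\DD}}|(S^f_n)_s(z)-f_s(z)|=\sup_{w\in\overline{\DD}_s}|S^f_n(w)-f(w)|$ and apply the lemma with $r_0=s$.
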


\begin{proof}
For $f\in A^{p}_{\lambda}(\DD)$, set $f_s(z)=f(sz)$ for $0\le s<1$.
We assert that $\|f_s-f\|_{A_{\lambda}^{p}}\rightarrow0$ as $s\rightarrow1-$, namely,
\begin{eqnarray}\label{Ap-norm-convergence-1}
\lim_{s\rightarrow1-}\int_0^1M_p(f_s-f;r)^p\,r^{2\lambda+1}dr=0.
\end{eqnarray}
Indeed, applying (\ref{Hp-norm-convergence-1}) yields $\lim_{s\rightarrow1-}M_p(f_s-f;r)=0$ for $0\le r<1$, and for all $0\le r,s<1$, $M_p(f_s-f;r)\lesssim M_p(f;rs)+M_p(f;r)\lesssim M_p(f;r)$ by Lemma \ref{monotonicity-integral-mean-2-a}. Thus (\ref{Ap-norm-convergence-1}) follows immediately by Lebesgue's dominated convergence theorem. In addition, by Lemma \ref{partial-sum-converge-a}, for fixed $s\in(0,1)$ $\|(S_n)_s-f_s\|_{A^{p}_{\lambda}}\rightarrow0$ as $n\rightarrow\infty$. Thus the density in Theorem \ref{Bergman-density-a} is proved.
The last assertion in the theorem follows from (\ref{Bergman-normalization-1}).
\end{proof}

\subsection{Completeness of the $\lambda$-Hardy and $\lambda$-Bergman spaces}

\begin{lemma}\label{analytic-converge-a}
Let $\{f_n\}$ be a sequence of $\lambda$-analytic functions on $\DD$. If $\{f_n\}$ converges uniformly on each compact subset of $\DD$, then its limit function is also $\lambda$-analytic in $\DD$.
\end{lemma}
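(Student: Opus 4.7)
The strategy is to exploit the reproducing formula in Corollary \ref{reproducing-Bergman-b}, apply it to each $f_n$, pass to the limit under the integral, and then invoke the converse half of the same corollary to identify $f$ as $\lambda$-analytic. This avoids having to show directly that $D_{\bar z}f=0$, which would require controlling second derivatives of the limit.

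First I would observe that for every $r\in(0,1)$ the closed disk $\overline{\DD_r}$ is a compact subset of $\DD$, so $f_n\to f$ uniformly on $\overline{\DD_r}$. Consequently $f$ is continuous on $\DD$, and a fortiori locally integrable with respect to $d\sigma_\lambda$. Next, fix $r\in(0,1)$ and apply Corollary \ref{reproducing-Bergman-b} to each $\lambda$-analytic $f_n$ to obtain
\begin{align*}
f_n(z)=r^{-2\lambda-2}\int_{\DD_r}f_n(w)K_{\lambda}(z/r,w/r)\,d\sigma_{\lambda}(w),\qquad z\in\DD_r.
\end{align*}
The third step is to let $n\to\infty$ on both sides. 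For a fixed $z\in\DD_r$ with $|z|=\rho<r$, one has $|(z/r)(w/r)|=|zw|/r^{2}\le\rho/r<1$ uniformly for $w\in\overline{\DD_r}$, so the series defining $K_{\lambda}(z/r,w/r)$ converges uniformly in $w\in\overline{\DD_r}$, and in particular the kernel is bounded there. Combined with the uniform convergence $f_n\to f$ on $\overline{\DD_r}$, this permits passage to the limit under the integral, giving
\begin{align*}
f(z)=r^{-2\lambda-2}\int_{\DD_r}f(w)K_{\lambda}(z/r,w/r)\,d\sigma_{\lambda}(w),\qquad z\in\DD_r,
\end{align*}
for every $r\in(0,1)$. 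Finally, the converse statement of Corollary \ref{reproducing-Bergman-b} applied to the continuous (hence locally integrable) function $f$ concludes that $f$ is $\lambda$-analytic in $\DD$.

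I expect no serious obstacle: the only point that needs a moment's care is the uniform boundedness of $K_{\lambda}(z/r,\cdot/r)$ on $\overline{\DD_r}$ for $z$ fixed, which follows directly from the product-modulus bound $\rho/r<1$ and the uniform convergence of the kernel series on compact subsets of $\{\zeta:\,|\zeta|<1\}$ noted right before Proposition \ref{reproducing-Bergman-a}.
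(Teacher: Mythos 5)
Your proof is correct and follows essentially the same route as the paper: apply the dilated reproducing formula of Corollary \ref{reproducing-Bergman-b} to each $f_n$, pass to the limit under the integral using uniform convergence and the boundedness of the kernel for fixed $z$, and conclude via the converse half of the corollary. The only cosmetic difference is that you justify the kernel bound by the uniform convergence of the defining series on compact subsets, whereas the paper cites the estimate of Lemma \ref{Bergman-kernel-a}; both are valid.
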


\begin{proof}
Obviously, the assumption implies that the limit function $f$ of the sequence $\{f_n\}$ exists and is continuous in $\DD$. Furthermore,
by Corollary \ref{reproducing-Bergman-b}, for each $n$ and $r\in(0,1)$ one has
\begin{eqnarray}\label{reproducing-Bergman-3}
f_n(z)=r^{-2\lambda-2}\int_{\DD_r}f_n(w)K_{\lambda}(z/r,w/r)d\sigma_{\lambda}(w),\qquad z\in\DD_r.
\end{eqnarray}
Since $|K_{\lambda}(z/r,w/r)|\lesssim\left(1-|z|/r\right)^{-2\lambda-2}$ for $z,w\in\DD_r$ by Lemma \ref{Bergman-kernel-a}, letting $n\rightarrow\infty$ in (\ref{reproducing-Bergman-3}) asserts that (\ref{reproducing-Bergman-2}) is true for the limit function $f$ and for each $r\in(0,1)$, and so, again by Corollary \ref{reproducing-Bergman-b}, $f$ is $\lambda$-analytic in $\DD$.
\end{proof}

\begin{theorem}\label{completeness-Hardy-a}
For $p_0\le p\le\infty$, the space $H^p_{\lambda}(\DD)$ is complete.
\end{theorem}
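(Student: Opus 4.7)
The plan is to follow the standard Hardy-space completeness template, using the point-evaluation estimate of Theorem \ref{Hardy-point-evaluation-a} as the bridge between norm convergence and local uniform convergence.

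First I would take a Cauchy sequence $\{f_n\}$ in $H_{\lambda}^p(\DD)$ (in the quasinorm when $p_0\le p<1$, in the norm when $1\le p\le\infty$) and use Theorem \ref{Hardy-point-evaluation-a} to obtain, for every compact $K\subset\DD$, a constant $C_K>0$ such that
\begin{eqnarray*}
\sup_{z\in K}|f_n(z)-f_m(z)|\le C_K\|f_n-f_m\|_{H_{\lambda}^{p}},
\end{eqnarray*}
since $(1-|z|)^{-1/p}|1-z^2|^{-2\lambda/p}$ is bounded on $K$. For $p=\infty$ this bound is trivial. Thus $\{f_n\}$ is uniformly Cauchy on each compact subset of $\DD$ and converges there to a continuous limit $f$, which is $\lambda$-analytic by Lemma \ref{analytic-converge-a}.

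Next I would show $f\in H_{\lambda}^p(\DD)$ and that $f_n\to f$ in the $H_{\lambda}^p$ (quasi)norm. For fixed $r\in[0,1)$, the uniform convergence of $f_n$ to $f$ on $\overline{\DD}_r$ gives
\begin{eqnarray*}
M_p(f-f_n;r)=\lim_{m\to\infty}M_p(f_m-f_n;r)\le\liminf_{m\to\infty}\|f_m-f_n\|_{H_{\lambda}^{p}}
\end{eqnarray*}
for $p<\infty$, with the obvious modification (pointwise sup instead of integral) when $p=\infty$. Given $\varepsilon>0$ and $N$ with $\|f_m-f_n\|_{H_{\lambda}^p}<\varepsilon$ for $m,n\ge N$ (using the $p$th power for $p<1$), taking the supremum over $r\in[0,1)$ yields $\|f-f_n\|_{H_{\lambda}^{p}}\le\varepsilon$ for $n\ge N$. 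In particular $f-f_N\in H_{\lambda}^p(\DD)$, so $f\in H_{\lambda}^p(\DD)$, and $f_n\to f$ in the $H_{\lambda}^p$ (quasi)norm.

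The only technical care needed is to keep track of two separate regimes: the quasinormed range $p_0\le p<1$, where the triangle inequality holds only for $\|\cdot\|_{H_{\lambda}^p}^p$ (so one uses $M_p(\cdot;r)^p$ and the $p$th-power metric throughout), and the case $p=\infty$, where $M_p(f;r)$ is replaced by $M_\infty(f;r)=\sup_{|\theta|\le\pi}|f(re^{i\theta})|$ and the argument becomes simpler. I do not foresee a real obstacle: the heart of the proof is the pointwise majorization supplied by Theorem \ref{Hardy-point-evaluation-a}, everything else being a routine diagonal/Fatou-type passage.
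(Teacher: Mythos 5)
Your proposal is correct and follows essentially the same route as the paper: the point-evaluation estimate of Theorem \ref{Hardy-point-evaluation-a} gives locally uniform convergence of the Cauchy sequence, Lemma \ref{analytic-converge-a} gives $\lambda$-analyticity of the limit, and a Fatou-type passage in $M_p(\cdot;r)^p$ yields convergence in the (quasi)norm. Your explicit attention to the quasinormed range $p_0\le p<1$ and to $p=\infty$ is slightly more careful than the paper's write-up but changes nothing of substance.
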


\begin{proof}
Let $\{f_n\}$ be a Cauchy sequence in $H^p_{\lambda}(\DD)$ for $p_0\le p<\infty$. For $r\in(0,1)$, it follows from Theorem \ref{Hardy-point-evaluation-a} that
\begin{eqnarray*}
|f_m(z)-f_n(z)|\lesssim(1-r)^{-(2\lambda+1)/p}\|f_m-f_n\|_{H_{\lambda}^{p}},\qquad z\in\overline{\DD}_r,
\end{eqnarray*}
which shows that $\{f_n\}$ converges to a function $f$ uniformly on each compact subset of $\DD$.
Lemma \ref{analytic-converge-a} asserts that $f$ is $\lambda$-analytic in $\DD$. The locally uniform convergence implies, for $r\in[0,1)$,
\begin{align}\label{completeness-Hardy-1}
M_p(f_n-f;r)^p=\lim_{m\rightarrow\infty}\int_{-\pi}^{\pi}|f_n(re^{i\theta})-f_m(re^{i\theta})|^p\,dm_{\lambda}(\theta)
\le\liminf_{m\rightarrow\infty}\|f_n-f_m\|^p_{H_{\lambda}^{p}},
\end{align}
so that $\|f_n-f\|_{H_{\lambda}^{p}}\le\liminf_{m\rightarrow\infty}\|f_n-f_m\|_{H_{\lambda}^{p}}$. Hence $H^p_{\lambda}(\DD)$ for $p_0\le p<\infty$ is complete, and so is $H^{\infty}_{\lambda}(\DD)$.
\end{proof}

\begin{theorem}\label{completeness-Bergman-a}
For $p_0\le p<\infty$, the space $A^p_{\lambda}(\DD)$ is complete.
\end{theorem}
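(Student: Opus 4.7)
The plan is to mimic the argument for Theorem \ref{completeness-Hardy-a}, replacing the Hardy point estimate with the Bergman point estimate and the boundary integral with the area integral. Let $\{f_n\}$ be a Cauchy sequence in $A^p_{\lambda}(\DD)$. First I would apply Theorem \ref{Bergman-point-evaluation-a} to the differences $f_m-f_n\in A^p_{\lambda}(\DD)$ to obtain, for any fixed $r\in(0,1)$,
\begin{align*}
|f_m(z)-f_n(z)|\lesssim \frac{(1-r)^{-2/p}}{(1-r^2)^{2\lambda/p}}\,\|f_m-f_n\|_{A^p_{\lambda}},\qquad z\in\overline{\DD}_r,
\end{align*}
so that $\{f_n\}$ is uniformly Cauchy on each compact subset of $\DD$ and converges locally uniformly to some continuous function $f$ on $\DD$. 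By Lemma \ref{analytic-converge-a}, $f$ is $\lambda$-analytic on $\DD$.

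Next I would control the $A^p_{\lambda}$-norm of $f_n-f$. For each $0<r<1$, by locally uniform convergence,
\begin{align*}
\int_{\DD_r}|f_n(w)-f(w)|^p\,d\sigma_{\lambda}(w)
=\lim_{m\to\infty}\int_{\DD_r}|f_n(w)-f_m(w)|^p\,d\sigma_{\lambda}(w)
\le \liminf_{m\to\infty}\|f_n-f_m\|_{A^p_{\lambda}}^p.
\end{align*}
Letting $r\to 1-$ and using the monotone convergence theorem on the left-hand side yields
\begin{align*}
\|f_n-f\|_{A^p_{\lambda}}\le \liminf_{m\to\infty}\|f_n-f_m\|_{A^p_{\lambda}}.
\end{align*}
Since $\{f_n\}$ is Cauchy, the right-hand side tends to $0$ as $n\to\infty$, so $f_n\to f$ in $A^p_{\lambda}(\DD)$. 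In particular $f_{n_0}-f\in L^p_{\lambda}(\DD)$ for some $n_0$, hence $f\in A^p_{\lambda}(\DD)$.

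There is no substantive obstacle here; the one place that requires a bit of care is the interchange of the pointwise limit with the integral over $\DD_r$ in the displayed identity, which I would justify by the uniform convergence of $f_m\to f$ on the compact set $\overline{\DD}_r$ together with the boundedness of the integrand there. The Bergman point estimate (Theorem \ref{Bergman-point-evaluation-a}) is the key input that guarantees local uniform Cauchyness from norm Cauchyness, and Lemma \ref{analytic-converge-a} supplies the $\lambda$-analyticity of the limit; the remainder is the standard Fatou-type lower-semicontinuity argument.
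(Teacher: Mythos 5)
Your proposal is correct and follows exactly the paper's own route: the paper likewise derives local uniform Cauchyness from Theorem \ref{Bergman-point-evaluation-a}, invokes Lemma \ref{analytic-converge-a} for the $\lambda$-analyticity of the limit, and concludes with the same Fatou-type estimate $\int_{\DD_r}|f_n-f|^{p}d\sigma_{\lambda}\le\liminf_{m}\|f_n-f_m\|^p_{A_{\lambda}^{p}}$ followed by letting $r\to1-$. No gaps; your justification of the limit interchange over $\overline{\DD}_r$ by uniform convergence is the right one.
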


The proof is similar to that of Theorem \ref{completeness-Hardy-a}, by means of Theorem \ref{Bergman-point-evaluation-a} and Lemma \ref{analytic-converge-a}. The only difference is that (\ref{completeness-Hardy-1}) is modified by,
for $r\in(0,1)$,
\begin{align*}
\int_{\DD_r}|f_n(z)-f(z)|^{p}d\sigma_{\lambda}(z)=\lim_{m\rightarrow\infty}\int_{\DD_r}|f_n(z)-f_m(z)|^{p}d\sigma_{\lambda}(z)
\le\liminf_{m\rightarrow\infty}\|f_n-f_m\|^p_{A_{\lambda}^{p}}.
\end{align*}


\subsection{A Szeg\"o type transform}

In order to determine the dual of the $\lambda$-Hardy spaces subsequently, we introduce a Szeg\"o type transform.
Since for $1\le p\le\infty$, the boundary value $f(e^{i\theta})$ of $f\in H_{\lambda}^p(\DD)$ satisfies (\ref{coefficient-2-2}), it follows that
\begin{align}\label{orthogonality-2-1}
\int_{-\pi}^{\pi}f(e^{i\varphi})e^{i\varphi}C(e^{i\varphi},z)dm_{\lambda}(\varphi)=0,\qquad z\in\DD;
\end{align}
and from (\ref{Poisson-kernel-2-1}) and (\ref{Poisson-1}), one has
\begin{align} \label{Cauchy-2-1}
f(z)=\int_{-\pi}^{\pi}f(e^{i\varphi})C(z,e^{i\varphi})
dm_{\lambda}(\varphi),\qquad z\in\DD.
\end{align}
We call the equality (\ref{Cauchy-2-1}) the $\lambda$-Cauchy integral formula of $f\in H_{\lambda}^p(\DD)$ ($1\le p\le\infty$).

In general, a Szeg\"o type transform can be defined, for $h\in L^{1}_{\lambda}(\partial\DD)$, by
\begin{align} \label{Szego-2-1}
S_{\lambda}h(z)=\int_{-\pi}^{\pi}h(e^{i\varphi})C(z,e^{i\varphi})
dm_{\lambda}(\varphi),\qquad z\in\DD.
\end{align}
The operator $S_{\lambda}$ is called the $\lambda$-Szeg\"o transform.

\begin{proposition}\label{Szego-boundedness-a}
For $1<p<\infty$, the $\lambda$-Szeg\"o transform maps $L^{p}_{\lambda}(\partial\DD)$ boundedly onto $H^p_{\lambda}(\DD)$.
\end{proposition}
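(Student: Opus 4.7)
The plan is to factor the Szeg\"o transform through a boundary projection onto $\lambda$-analytic data. Start from the Poisson--Cauchy identity (\ref{Poisson-kernel-2-1}), which reads $P(z,e^{i\varphi})=C(z,e^{i\varphi})+\bar{z}e^{i\varphi}C(e^{i\varphi},z)$, so for $h\in L^p_{\lambda}(\partial\DD)\subset L^1_{\lambda}(\partial\DD)$,
\[
P(h;z)=S_{\lambda}h(z)+\bar{z}\int_{-\pi}^{\pi}h(e^{i\varphi})e^{i\varphi}C(e^{i\varphi},z)\,dm_{\lambda}(\varphi).
\]
Expanding $C(z,e^{i\varphi})$ via (\ref{Cauchy-kernel-2-1}) and using the orthonormal basis in Proposition \ref{orthonirmal-basis-a}, one checks termwise that $S_{\lambda}\phi_n(z)=\phi_n(z)$ and $S_{\lambda}(e^{-i\theta}\phi_{n-1}(e^{-i\theta}))(z)=0$. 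If $h$ has the expansion $h=\sum_{n\ge0}a_n\phi_n(e^{i\theta})+\sum_{n\ge1}b_n e^{-i\theta}\phi_{n-1}(e^{-i\theta})$, this gives $S_{\lambda}h(z)=\sum_n a_n\phi_n(z)=P(A_{\lambda}h;z)$, where $A_{\lambda}h(e^{i\theta}):=\sum_n a_n\phi_n(e^{i\theta})$ is the projection of $h$ onto its $\lambda$-analytic part. Thus $S_{\lambda}$ factors as $P\circ A_{\lambda}$.

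With this factorization, boundedness is reduced to the $L^p_\lambda(\partial\DD)$-boundedness of the boundary projection $A_{\lambda}$: granted $\|A_{\lambda}h\|_{L^p_{\lambda}(\partial\DD)}\lesssim\|h\|_{L^p_{\lambda}(\partial\DD)}$ for $1<p<\infty$, Proposition \ref{Poi-property-1}(iv) immediately yields
\[
\|S_{\lambda}h\|_{H^p_{\lambda}}=\sup_{0\le r<1}M_p(P(A_{\lambda}h;\cdot);r)\le\|A_{\lambda}h\|_{L^p_{\lambda}(\partial\DD)}\lesssim\|h\|_{L^p_{\lambda}(\partial\DD)}.
\]
Surjectivity is then quick: for $f\in H^p_{\lambda}(\DD)$, Theorem \ref{Hp-Lp-thm-1} provides a boundary value $f^\ast\in L^p_{\lambda}(\partial\DD)$, and the orthogonality condition (\ref{coefficient-2-2}) kills the second term in the decomposition of $P(f^\ast;z)=f(z)$, leaving the $\lambda$-Cauchy representation $f(z)=S_{\lambda}f^\ast(z)$ from (\ref{Cauchy-2-1}); hence every $f\in H^p_\lambda(\DD)$ lies in the image of $S_\lambda$.

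The hard part will thus be the $L^p_{\lambda}(\partial\DD)$-boundedness of $A_{\lambda}$ for $1<p<\infty$. This is the $\lambda$-analogue of M.~Riesz's theorem on boundedness of the Riesz projection (equivalently, of the conjugate function operator) on the circle, which at $\lambda=0$ is the classical M.~Riesz theorem. In the present setting the natural source is the $\lambda$-conjugate function theory developed in \cite{LL1}; absent such a direct citation, one would have to establish a Calder\'on--Zygmund-type bound for the kernel $(1-re^{i(\theta-\varphi)})^{-1}P_0(re^{i\theta},e^{i\varphi})$ against the weight $|\sin\varphi|^{2\lambda}$, using the reflection-symmetric structure behind the Dunkl operators. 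All remaining ingredients---density of polynomials in $L^p_{\lambda}(\partial\DD)$ to pass from basis expansions to general $h$, termwise identification on basis elements, and the Cauchy-integral manipulation for surjectivity---are essentially routine.
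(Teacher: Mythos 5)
Your factorization $S_{\lambda}=P\circ A_{\lambda}$ is formally correct on $\lambda$-harmonic polynomials, and your surjectivity argument via (\ref{Cauchy-2-1}) matches the paper's. But the proof as written has a genuine gap at exactly the point you flag as ``the hard part'': the $L^{p}_{\lambda}(\partial\DD)$-boundedness of the boundary projection $A_{\lambda}$ is \emph{assumed}, not proved, and it is the entire content of the proposition. Your two suggested escape routes do not close it. A direct Calder\'on--Zygmund bound on the kernel $(1-z\bar{w})^{-1}P_{0}(z,w)$ against the degenerate weight $|\sin\varphi|^{2\lambda}$ is a substantial undertaking (the kernel has the logarithmic singularity structure of Lemma \ref{Cauchy-estimate-a} and the measure is not doubling-friendly near $\theta=0,\pi$ in the naive sense); carrying it out would essentially amount to reproving the M.~Riesz theorem of \cite{LL1}. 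And the vague pointer to ``the $\lambda$-conjugate function theory in \cite{LL1}'' is not a usable citation: what \cite{LL1} provides is boundedness of the conjugate Poisson integral $Q(h;\cdot)$ (Theorem 5.1 there), not of $A_{\lambda}$, and passing from one to the other requires an explicit identity that you have not supplied. There is also a secondary issue: for $1<p<2$ the expansion of $h$ in the basis of Proposition \ref{orthonirmal-basis-a} need not converge in $L^{p}_{\lambda}(\partial\DD)$, so $A_{\lambda}h$ is not a priori a function and the identity $S_{\lambda}h=P(A_{\lambda}h;\cdot)$ cannot be asserted for general $h$ before the boundedness is known; the termwise computation only gives it on the dense subspace.

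The paper closes the gap by making the link to the conjugate function concrete. From the kernel identity $Q(z,w)=-i\left[2C(z,w)-P(z,w)-1-Q_{1}(z,w)\right]$ one gets
\begin{align*}
S_{\lambda}h(z)-\tfrac{i}{2}Q(h;z)=\tfrac{1}{2}\int_{-\pi}^{\pi}h(e^{i\varphi})\left[P(z,e^{i\varphi})+1+Q_{1}(z,e^{i\varphi})\right]dm_{\lambda}(\varphi),
\end{align*}
where $Q_{1}(re^{i\theta},e^{i\varphi})$ is an integrable kernel uniformly in $r$ by \cite[Lemma 5.3]{LL1}; together with Proposition \ref{Poi-property-1}(iv) this bounds the right-hand side in $L^{p}_{\lambda}$, and then the M.~Riesz type theorem \cite[Theorem 5.1]{LL1} for $Q(h;\cdot)$ gives $M_{p}(S_{\lambda}h;r)\lesssim\|h\|_{L^{p}_{\lambda}(\partial\DD)}$. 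This is the same idea you are reaching for, but with the precise decomposition and the two citations that make it a proof. To repair your write-up, replace the appeal to an unproved boundedness of $A_{\lambda}$ with this identity (or prove the equivalence of the two statements explicitly).
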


\begin{proof}
Recall that, for $h\in L^{1}_{\lambda}(\partial\DD)$, its $\lambda$-conjugate Poisson
integral is (cf. \cite[Subsection 5.1]{LL1})
\begin{eqnarray*} \label{conjugate-Poisson-1}
Q(h;z)=\int_{-\pi}^{\pi}h(e^{i\varphi})Q(z,e^{i\varphi})
dm_{\lambda}(\varphi),\qquad z\in\DD,
\end{eqnarray*}
where $Q(z,w)$ is the $\lambda$-conjugate Poisson kernel given by
\begin{align*}
Q(z,w)=-i\left[2C(z,w)-P(z,w)-1-Q_1(z,w)\right]
\end{align*}
with
\begin{eqnarray*}\label{conjugate-ker-3}
Q_1(z,w)=\sum_{n=1}^{\infty}\frac{2\lambda}{\sqrt{n(n+2\lambda)}}\phi_{n}(z)w\phi_{n-1}^{\lambda}(w).
\end{eqnarray*}
It then follows that
\begin{align*}
S_{\lambda}h(z)-\frac{i}{2}Q(h;z)=\frac{1}{2}\int_{-\pi}^{\pi}h(e^{i\varphi})\left[P(z,e^{i\varphi})+1+Q_1(z,e^{i\varphi})\right]
dm_{\lambda}(\varphi),\qquad z\in\DD.
\end{align*}
By \cite[Lemma 5.3]{LL1}, the function $(\theta,\varphi)\mapsto Q_1(re^{i\theta},e^{i\varphi})$ is an integrable kernel in $L^{1}_{\lambda}(\partial\DD)$ uniformly with respect to $r\in[0,1)$, which together with Proposition \ref{Poi-property-1}(iv) yields, for $1\le p\le\infty$,
$$
M_p\left(S_{\lambda}h-2^{-1}iQ(h;\cdot);r\right)\lesssim\|h\|_{L_{\lambda}^{p}(\partial\DD)},\qquad h\in L^{p}_{\lambda}(\partial\DD),\,\,0\le r<1.
$$
Thus for $1<p<\infty$, by the M. Riesz type theorem for $\lambda$-conjugate Poisson
integrals (cf. \cite[Theorem 5.1]{LL1}) we get $M_p(S_{\lambda}h;r)\lesssim\|h\|_{L_{\lambda}^{p}(\partial\DD)}$ for all $h\in L^{p}_{\lambda}(\partial\DD)$ and $0\le r<1$. This finishes the proof of the proposition.
\end{proof}

\begin{corollary}\label{Szego-boundedness-b}
If $h\in L^{p}_{\lambda}(\partial\DD)$ for $1<p<\infty$, then
\begin{align} \label{orthogonality-2-2}
\int_{-\pi}^{\pi}\left(h(e^{i\theta})-S_{\lambda}h(e^{i\theta})\right)\overline{f(e^{i\theta})}
dm_{\lambda}(\theta)=0
\end{align}
for all $f\in H^{p'}_{\lambda}(\DD)$.
\end{corollary}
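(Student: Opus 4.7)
The plan is to verify (\ref{orthogonality-2-2}) first when $f=\phi_n$ by matching Fourier coefficients, extend to $\lambda$-analytic polynomials by linearity, and then pass to a general $f\in H^{p'}_{\lambda}(\DD)$ by density. Note at the outset that Proposition \ref{Szego-boundedness-a} gives $S_{\lambda}h\in H^p_{\lambda}(\DD)$, so by Theorem \ref{Hp-Lp-thm-1} its boundary value lies in $L^p_{\lambda}(\partial\DD)$; in particular $h-S_{\lambda}h\in L^p_{\lambda}(\partial\DD)$, which will be needed for the H\"older pairing at the end.

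The key step is to identify the $\lambda$-analytic expansion of $S_{\lambda}h$. Fix $z\in\DD$. Substituting the series (\ref{Cauchy-kernel-2-1}) into (\ref{Szego-2-1}) and using (\ref{phi-bound-1}), one has
\begin{align*}
\sum_{n=0}^{\infty}\left|\phi_n(z)\overline{\phi_n(e^{i\varphi})}\right|\lesssim\sum_{n=0}^{\infty}(n+1)^{2\lambda}|z|^n<\infty
\end{align*}
uniformly in $\varphi$; since $h\in L^1_{\lambda}(\partial\DD)$, Fubini's theorem permits termwise integration and yields
\begin{align*}
S_{\lambda}h(z)=\sum_{n=0}^{\infty}c_n\phi_n(z),\qquad c_n:=\int_{-\pi}^{\pi}h(e^{i\varphi})\overline{\phi_n(e^{i\varphi})}\,dm_{\lambda}(\varphi).
\end{align*}
On the other hand, since $S_{\lambda}h\in H^p_{\lambda}(\DD)$ with $1<p<\infty$, formula (\ref{coefficient-2-3}) of Theorem \ref{Hp-Lp-thm-1} recovers the same coefficient as $c_n=\int_{-\pi}^{\pi}S_{\lambda}h(e^{i\theta})\overline{\phi_n(e^{i\theta})}\,dm_{\lambda}(\theta)$. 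Subtracting gives $\int_{-\pi}^{\pi}(h-S_{\lambda}h)(e^{i\theta})\overline{\phi_n(e^{i\theta})}\,dm_{\lambda}(\theta)=0$ for every $n\in\NN_0$, and by linearity (\ref{orthogonality-2-2}) holds whenever $f$ is a $\lambda$-analytic polynomial.

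To pass to an arbitrary $f\in H^{p'}_{\lambda}(\DD)$, note that $p'>1>p_0$, so Theorem \ref{Hardy-density-a} furnishes $\lambda$-analytic polynomials $P_k\to f$ in $H^{p'}_{\lambda}$, and (\ref{Hp-norm-convergence-1}) of Theorem \ref{Hardy-boundary-value-a} then gives $P_k\to f$ in $L^{p'}_{\lambda}(\partial\DD)$. H\"older's inequality applied to $h-S_{\lambda}h\in L^p_{\lambda}(\partial\DD)$ against $\overline{P_k-f}\in L^{p'}_{\lambda}(\partial\DD)$ passes the vanishing identity to the limit, yielding (\ref{orthogonality-2-2}). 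The only mild technicality is the Fubini interchange identifying $c_n$, controlled by the pointwise bound (\ref{phi-bound-1}).
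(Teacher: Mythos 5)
Your proof is correct, but it takes a different route from the paper's. The paper works at the kernel level: using the decomposition (\ref{Poisson-kernel-2-1}) it writes $P(h;z)-S_{\lambda}h(z)=\bar{z}\int_{-\pi}^{\pi}h(e^{i\varphi})e^{i\varphi}C(e^{i\varphi},z)\,dm_{\lambda}(\varphi)$, pairs this against $\overline{f(e^{i\theta})}$ at radius $r<1$, kills the resulting inner integral by the orthogonality relation (\ref{orthogonality-2-1}) satisfied by boundary values of $H^{p'}_{\lambda}(\DD)$, and then lets $r\rightarrow1-$ using the $L^{p}_{\lambda}(\partial\DD)$-norm convergence of $P(h;re^{i\theta})$ and $S_{\lambda}h(re^{i\theta})$ to their boundary values. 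You instead work at the coefficient level: termwise integration of (\ref{Cauchy-kernel-2-1}) identifies the expansion coefficients of $S_{\lambda}h$ with the ``analytic'' Fourier coefficients of $h$, formula (\ref{coefficient-2-3}) recovers the same numbers from the boundary value of $S_{\lambda}h$, so $h-S_{\lambda}h$ annihilates every $\phi_n$, and density of $\lambda$-analytic polynomials (Theorem \ref{Hardy-density-a}, applicable since $p'>1>p_0$) plus H\"older finishes the argument. Your version is arguably more transparent and avoids the Poisson-kernel identity and the $r\to1-$ limit entirely, at the cost of invoking the density theorem, which the paper's proof does not need; the paper's version handles general $f\in H^{p'}_{\lambda}(\DD)$ in one stroke via (\ref{orthogonality-2-1}). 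All the ingredients you cite (Proposition \ref{Szego-boundedness-a} to place $S_{\lambda}h$ in $H^{p}_{\lambda}(\DD)$, the bound (\ref{phi-bound-1}) to justify Fubini, Theorem \ref{Hp-Lp-thm-1} for the boundary values and norms) are available and correctly applied, so the argument stands.
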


\begin{proof}
From (\ref{Cauchy-kernel-2-1}), (\ref{Poisson-kernel-2-1}), (\ref{Poisson-integral-1}) and (\ref{Szego-2-1}), we have
\begin{align*}
P(h;z)-S_{\lambda}h(z)=\bar{z}\int_{-\pi}^{\pi}h(e^{i\varphi})e^{i\varphi} C(e^{i\varphi},z)
dm_{\lambda}(\varphi),\qquad z\in\DD,
\end{align*}
and for $f\in H^{p'}_{\lambda}(\DD)$, applying Fubini's theorem and (\ref{orthogonality-2-1}) gives
\begin{align} \label{orthogonality-2-3}
&\int_{-\pi}^{\pi}\left(P(h;re^{i\theta})-S_{\lambda}h(re^{i\theta})\right)\overline{f(e^{i\theta})}
dm_{\lambda}(\theta)\nonumber\\
&=\int_{-\pi}^{\pi}h(e^{i\varphi})re^{i\varphi}\,\overline{\int_{-\pi}^{\pi} e^{i\theta}C(e^{i\theta},re^{i\varphi})f(e^{i\theta})
dm_{\lambda}(\theta)}\,dm_{\lambda}(\varphi)\nonumber\\
&=0.
\end{align}
Since $P(h;re^{i\theta})-S_{\lambda}h(re^{i\theta})$ converges to $h(e^{i\theta})-S_{\lambda}h(e^{i\theta})$ in the $L^{p}_{\lambda}(\partial\DD)$-norm by Propositions \ref{Poi-property-1}(v) and \ref{Szego-boundedness-a}, and then (\ref{Hp-norm-convergence-1}), letting $r\rightarrow1-$ in (\ref{orthogonality-2-3}) proves (\ref{orthogonality-2-2}).
\end{proof}

\subsection{Duality for the $\lambda$-Hardy spaces and the $\lambda$-Bergman spaces}

Let $X$ be a Banach space, and let $S$ be a (closed) subspace of $X$. The annihilator of $S$, denoted by $S^{\perp}$, is the set of all linear functionals
$L\in X^*$ such that $L(x)=0$ for all $x\in S$. A consequence of the Hahn-Banach theorem reads as follows (cf. \cite[Theorem 7.1]{Du2}).

\begin{lemma}\label{Duality-Banach-space-a}
The dual $S^*$ of $S$ is isometrically isomorphic to the quotient space $X^*/S^{\perp}$. Furthermore, for each fixed $L\in X^*$,
\begin{align*}
\sup_{x\in S,\|x\|\le1}|L(x)|=\min_{\widetilde{L}\in S^{\perp}}\|L+\widetilde{L}\|,
\end{align*}
where ``$\min$" indicates that the infimum is attained.
\end{lemma}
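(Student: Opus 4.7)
The plan is to construct the canonical isomorphism $\Phi\colon X^*/S^{\perp}\to S^*$ sending the coset $[L]$ to the restriction $L|_S$, and to verify that $\Phi$ is well-defined, linear, bijective, and isometric, with the attainment of the minimum following from a Hahn--Banach extension.

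First I would check that $\Phi$ is well-defined and injective: if $L_1-L_2\in S^{\perp}$, then $(L_1-L_2)|_S=0$, so $L_1|_S=L_2|_S$, and conversely if $L|_S=0$ then $L\in S^{\perp}$ and $[L]=0$. Linearity is obvious. For surjectivity, given $\ell\in S^*$, the Hahn--Banach theorem yields an extension $L\in X^*$ with $L|_S=\ell$ and $\|L\|=\|\ell\|$, so $\Phi([L])=\ell$.

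Next I would establish the isometry. For the easy direction, for any $\widetilde L\in S^{\perp}$ and any $x\in S$ with $\|x\|\le 1$,
\begin{align*}
|L(x)|=|(L+\widetilde L)(x)|\le \|L+\widetilde L\|,
\end{align*}
so $\|L|_S\|_{S^*}\le \inf_{\widetilde L\in S^{\perp}}\|L+\widetilde L\|=\|[L]\|_{X^*/S^{\perp}}$. For the reverse direction, invoke Hahn--Banach to produce $L'\in X^*$ with $L'|_S=L|_S$ and $\|L'\|_{X^*}=\|L|_S\|_{S^*}$; then $L-L'\in S^{\perp}$, so
\begin{align*}
\|[L]\|_{X^*/S^{\perp}}\le \|L-(L-L')\|_{X^*}=\|L'\|_{X^*}=\|L|_S\|_{S^*}.
\end{align*}
This simultaneously shows that the infimum in the quotient norm is attained at $\widetilde L=L'-L\in S^{\perp}$, proving the ``$\min$'' assertion.

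There is no serious obstacle: the whole argument rests on one application of the Hahn--Banach extension theorem together with the definition of the quotient norm, and the verifications are essentially formal. The only thing to be careful about is to state explicitly that the extension $L'$ provided by Hahn--Banach simultaneously (i) lives in $X^*$, (ii) agrees with $L$ on $S$, and (iii) has norm equal to $\|L|_S\|$, since all three properties are used in proving both the norm equality and the attainment of the minimum.
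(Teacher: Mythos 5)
Your proof is correct and is the standard Hahn--Banach argument: the paper itself gives no proof of this lemma, simply citing Duren's \emph{Theory of $H^p$ Spaces}, Theorem 7.1, and your construction of the restriction map $X^*/S^{\perp}\to S^*$ together with the norm-preserving extension is essentially the argument found there. Nothing is missing; in particular you correctly isolate the one nontrivial point, namely that the Hahn--Banach extension $L'$ both witnesses the reverse inequality and realizes the infimum in the quotient norm.
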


Theorem \ref{Hp-Lp-thm-1} implies that, for $1\le p\le\infty$, the set of boundary functions of
$H^p_{\lambda}(\DD)$, namely, $H^p_{\lambda}(\partial\DD):=\{f(e^{i\theta}):\,\,f\in H^p_{\lambda}(\DD)\}$, consists of those elements in $L^{p}_{\lambda}(\partial\DD)$ satisfying the condition (\ref{coefficient-2-2}), and is isometrically isomorphic to $H^p_{\lambda}(\DD)$.

For $1\le p\le\infty$, define $\left(zH^p_{\lambda}\right)(\DD)=\left\{zf(z):\,\, f\in H^p_{\lambda}(\DD)\right\}$, and similarly,
$$
\left(e^{i\theta}H^p_{\lambda}\right)(\partial\DD)=\left\{e^{i\theta}f(e^{i\theta}):\,\, f\in H^p_{\lambda}(\DD)\right\}.
$$

\begin{theorem}\label{Duality-Hardy-space-a}
For $1\le p<\infty$, the dual space $H^p_{\lambda}(\partial\DD)^*$ (or $H^p_{\lambda}(\DD)^*$) is isometrically isomorphic to the quotient space $L^{p'}_{\lambda}(\partial\DD)/(e^{i\theta}H^{p'}_{\lambda})(\partial\DD)$, where $p^{-1}+p'^{-1}=1$. Furthermore if $1<p<\infty$, then $H^p_{\lambda}(\partial\DD)^*$ is isomorphic to $H^{p'}_{\lambda}(\partial\DD)$ in the sense that, each $L\in H^p_{\lambda}(\partial\DD)^*$ can be represented by
\begin{align}\label{Hp-functional-1}
L(f)=\int_{-\pi}^{\pi}f(e^{i\theta})\overline{g(e^{i\theta})}
dm_{\lambda}(\theta),\qquad f\in H^p_{\lambda}(\partial\DD),
\end{align}
with a unique function $g\in H^{p'}_{\lambda}(\partial\DD)$ satisfying
$C_p\|g\|_{H_{\lambda}^{p'}}\le\|L\|\le\|g\|_{H_{\lambda}^{p'}}$, where the constant $C_p$ is independent of $g$. While each $L\in H^1_{\lambda}(\partial\DD)^*$ can be represented by (\ref{Hp-functional-1}) with some $g\in L^{\infty}(\partial\DD)$ satisfying
$\|g\|_{L^{\infty}(\partial\DD)}=\|L\|$.
\end{theorem}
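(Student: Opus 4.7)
The plan is to apply Lemma~\ref{Duality-Banach-space-a} to the closed subspace $H^p_\lambda(\partial\DD)$ of $L^p_\lambda(\partial\DD)$, and then to use the $\lambda$-Szeg\"o projection of Proposition~\ref{Szego-boundedness-a} and Corollary~\ref{Szego-boundedness-b} to extract a canonical representative in $H^{p'}_\lambda$. First I verify that $H^p_\lambda(\partial\DD)$ is a closed subspace of $L^p_\lambda(\partial\DD)$: by Theorem~\ref{Hp-Lp-thm-1} the boundary-value map is an isometric embedding of $H^p_\lambda(\DD)$ onto $H^p_\lambda(\partial\DD)$, and by Theorem~\ref{completeness-Hardy-a} the Hardy space is complete, so its image is closed. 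Lemma~\ref{Duality-Banach-space-a} then yields an isometric isomorphism $H^p_\lambda(\partial\DD)^*\cong L^{p'}_\lambda(\partial\DD)/H^p_\lambda(\partial\DD)^\perp$.

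To identify the annihilator with $(e^{i\theta}H^{p'}_\lambda)(\partial\DD)$ I will work with the bilinear integration pairing $\langle f,g\rangle=\int fg\,dm_\lambda$. For the easy inclusion, given $g=e^{i\theta}h$ with $h\in H^{p'}_\lambda(\DD)$, the orthogonality (\ref{coefficient-2-2}) applied to $h$ gives $\int h(e^{i\varphi})e^{i\varphi}\phi_n(e^{i\varphi})\,dm_\lambda=0$ for every $n\ge 0$; by symmetry of the bilinear pairing this equals $\int\phi_n g\,dm_\lambda$, and density of $\lambda$-analytic polynomials in $H^p_\lambda$ (Theorem~\ref{Hardy-density-a}, valid since $p\ge 1>p_0$) extends the vanishing to all $f\in H^p_\lambda$. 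Conversely, if $g\in L^{p'}_\lambda$ annihilates every $\phi_n$ under this pairing, set $h(e^{i\theta})=e^{-i\theta}g(e^{i\theta})\in L^{p'}_\lambda$; then $\int h\,e^{i\varphi}\phi_n\,dm_\lambda=\int g\phi_n\,dm_\lambda=0$ is precisely the characterization (\ref{coefficient-2-2}) of boundary values in $H^{p'}_\lambda$ from Theorem~\ref{Hp-Lp-thm-1}, so $h\in H^{p'}_\lambda(\partial\DD)$ and $g\in(e^{i\theta}H^{p'}_\lambda)(\partial\DD)$. This establishes the quotient isomorphism for $1\le p<\infty$.

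For the representation formula when $1<p<\infty$, take $L\in H^p_\lambda(\partial\DD)^*$, extend by Hahn-Banach to $\tilde L$ on $L^p_\lambda(\partial\DD)$ of equal norm, and write $\tilde L(f)=\int f\overline G\,dm_\lambda$ with $G\in L^{p'}_\lambda$ and $\|G\|_{L^{p'}_\lambda}=\|L\|$. Let $g$ be the boundary value of $S_\lambda G\in H^{p'}_\lambda(\DD)$; Proposition~\ref{Szego-boundedness-a} yields $\|g\|_{H^{p'}_\lambda}\lesssim\|G\|_{L^{p'}_\lambda}=\|L\|$, which provides the lower bound $C_p\|g\|_{H^{p'}_\lambda}\le\|L\|$. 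Applying Corollary~\ref{Szego-boundedness-b} with the roles of $p$ and $p'$ interchanged gives $\int(G-S_\lambda G)\overline f\,dm_\lambda=0$ for all $f\in H^p_\lambda$; conjugating produces $L(f)=\int f\overline G\,dm_\lambda=\int f\overline g\,dm_\lambda$. The upper bound $\|L\|\le\|g\|_{H^{p'}_\lambda}$ is H\"older. For uniqueness, if $g_1,g_2\in H^{p'}_\lambda$ induce the same functional then $\int\phi_n\overline{(g_1-g_2)}\,dm_\lambda=0$ for every $n$, and by (\ref{coefficient-2-3}) this integral is the conjugate of the $n$th $\phi_n$-coefficient of $g_1-g_2$, forcing $g_1=g_2$. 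The $p=1$ case is simpler: Hahn-Banach together with the Riesz representation of $(L^1_\lambda)^*=L^\infty(\partial\DD)$ directly yields $g\in L^\infty(\partial\DD)$ with $\|g\|_\infty=\|L\|$ and $L(f)=\int f\overline g\,dm_\lambda$; the Szeg\"o projection is neither needed nor available here because $S_\lambda$ fails to be bounded on $L^\infty$.

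The main delicacy I expect is keeping the bilinear and sesquilinear pairings straight. The annihilator in the first claim matches $(e^{i\theta}H^{p'}_\lambda)(\partial\DD)$ naturally under the bilinear pairing (via the symmetry of (\ref{coefficient-2-2})), whereas the explicit representation (\ref{Hp-functional-1}) is sesquilinear; the two descriptions are reconciled by the conjugate-linear isometry $G\mapsto\overline G$ of $L^{p'}_\lambda$. I expect this bookkeeping, together with the indices-exchanged application of Corollary~\ref{Szego-boundedness-b}, to be the most delicate part rather than any deep new estimate.
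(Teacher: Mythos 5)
Your proposal is correct and follows essentially the same route as the paper: Lemma \ref{Duality-Banach-space-a} plus the identification $\left(H_{\lambda}^p\right)^{\perp}=(e^{i\theta}H^{p'}_{\lambda})(\partial\DD)$ via density of $\lambda$-analytic polynomials and the criterion (\ref{coefficient-2-2}), and then Hahn--Banach, the Riesz representation, and the $\lambda$-Szeg\"o transform $S_{\lambda}$ together with Corollary \ref{Szego-boundedness-b} for the explicit representation when $1<p<\infty$. Your explicit checks (closedness of $H^p_{\lambda}(\partial\DD)$, the swap of $p$ and $p'$ in Corollary \ref{Szego-boundedness-b}, and the bilinear-versus-sesquilinear bookkeeping) are all consistent with what the paper leaves implicit.
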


\begin{proof}
The proof of the theorem is in the same style as that of \cite[Theorem 7.3]{Du2}.
The first part is a consequence of Lemma \ref{Duality-Banach-space-a}, as long as the equality $\left(H_{\lambda}^p(\DD)\right)^{\perp}=(e^{i\theta}H^{p'}_{\lambda})(\partial\DD)$ for $1\le p<\infty$ is shown.
Indeed, the density of $\lambda$-analytic polynomials in $H_{\lambda}^p(\DD)$, by Theorem \ref{Hardy-density-a}, implies that $g\in L^{p'}_{\lambda}(\partial\DD)$ annihilates $H^p_{\lambda}(\partial\DD)$ if and only if
\begin{align}\label{coefficient-5-1}
\int_{-\pi}^{\pi}\phi_{n}(e^{i\theta})g(e^{i\theta})dm_{\lambda}(\theta)=0,\qquad n\in\NN_0.
\end{align}
Note that (\ref{coefficient-5-1}) is equivalent to say that $e^{-i\theta}g(e^{i\theta})\in L^{p'}_{\lambda}(\partial\DD)$ satisfies (\ref{coefficient-2-2}), namely, $e^{-i\theta}g(e^{i\theta})\in H^{p'}_{\lambda}(\partial\DD)$, and hence $g\in(e^{i\theta}H^{p'}_{\lambda})(\partial\DD)$.

To show the second part of the theorem, that is, each $L\in H^p_{\lambda}(\partial\DD)^*$ for $1<p<\infty$, has the representation (\ref{Hp-functional-1}), we use the $\lambda$-Szeg\"o transform $S_{\lambda}$ defined in the last subsection.
Since, by the Hahn-Banach theorem, $L$ can be extended to a bounded linear functional on $L^p_{\lambda}(\partial\DD)$ with the same norm, the Riesz representation theorem yields a function $h\in L^{p'}_{\lambda}(\partial\DD)$, with $\|h\|_{L_{\lambda}^{p'}(\partial\DD)}=\|L\|$, satisfying
\begin{align}\label{Lp-functional-1}
L(f)=\int_{-\pi}^{\pi}f(e^{i\theta})h(e^{i\theta})dm_{\lambda}(\theta)
\end{align}
for all $f\in L^p_{\lambda}(\partial\DD)$. If we put $g(z)=S_{\lambda}\bar{h}(z)$, then by Proposition \ref{Szego-boundedness-a}, $g(z)\in H^{p'}_{\lambda}(\DD)$ and hence $g(e^{i\theta})\in H^{p'}_{\lambda}(\partial\DD)$. Furthermore
$\|g\|_{H_{\lambda}^{p'}}\le C_p^{-1}\|h\|_{L_{\lambda}^{p'}(\partial\DD)}=C_p^{-1}\|L\|$, and by Corollary \ref{Szego-boundedness-b} and (\ref{Lp-functional-1}), the representation (\ref{Hp-functional-1}) is concluded.
If $g(z)=\sum_{n=0}^{\infty}a_{n}\phi_{n}(z)$, then by (\ref{coefficient-2-3}),
$L(\phi_n)=\overline{a_n}$ for $n\in\NN_0$. This shows that $g\in H^{p'}_{\lambda}(\DD)$ is uniquely determined by $L$.
On the other hand, every $g\in H^{p'}_{\lambda}(\DD)$ defines a functional $L\in H^p_{\lambda}(\partial\DD)^*$ by (\ref{Hp-functional-1}), certainly satisfying $\|L\|\le\|g\|_{H_{\lambda}^{p'}}$.
The assertion for $p=1$ is obvious. The proof of the theorem is completed.
\end{proof}

Finally we consider the dual of the $\lambda$-Bergman spaces $A^{p}_{\lambda}(\DD)$ for $1<p<\infty$. The duality theorem of the usual Bergman spaces $A^p$ with $1<p<\infty$ was proved in \cite{ZY} (see also \cite[p. 35]{DS1}).

\begin{theorem}\label{Duality-Bergman-space-a}
For $1<p<\infty$, The dual space $A^{p}_{\lambda}(\DD)^*$ is isomorphic to $A^{p'}_{\lambda}(\DD)$ in the sense that, each $L\in A^p_{\lambda}(\DD)^*$ can be represented by
\begin{align*}
L(f)=\int_{\DD}f(z)\overline{g(z)}d\sigma_{\lambda}(z),\qquad f\in A^p_{\lambda}(\DD),
\end{align*}
with a unique function $g\in A^{p'}_{\lambda}(\DD)$ satisfying
$C_p\|g\|_{A_{\lambda}^{p'}}\le\|L\|\le\|g\|_{A_{\lambda}^{p'}}$, where the constant $C_p$ is independent of $g$.
\end{theorem}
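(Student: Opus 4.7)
The plan is to imitate the classical proof of duality for the usual Bergman spaces, using the $\lambda$-Bergman projection $P_{\lambda}$ and its $L^p$-boundedness from Theorem \ref{Bergman-projection-a}, together with the self-adjointness/Fubini properties encoded in Corollary \ref{Bergman-projection-a-1}.

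First I would dispose of the easy direction. Given $g \in A^{p'}_{\lambda}(\DD)$, H\"older's inequality shows that $L_g(f) := \int_{\DD} f(z)\overline{g(z)}\,d\sigma_{\lambda}(z)$ is a bounded linear functional on $A^p_{\lambda}(\DD)$ with $\|L_g\| \le \|g\|_{A^{p'}_{\lambda}}$, and uniqueness of $g$ follows from density: if $g \in A^{p'}_{\lambda}(\DD)$ satisfies $\int_{\DD} f\bar g\,d\sigma_{\lambda} = 0$ for all $f \in A^p_{\lambda}(\DD)$, then taking $f = \phi_n$ and using the coefficient identity (\ref{coefficient-2-1}) from the proof of Proposition \ref{reproducing-Bergman-a} forces every expansion coefficient of $g$ to vanish, so $g \equiv 0$.

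For the main direction, given $L \in A^p_{\lambda}(\DD)^*$, the Hahn-Banach theorem extends $L$ to a functional $\widetilde L$ on $L^p_{\lambda}(\DD)$ with $\|\widetilde L\| = \|L\|$, and the Riesz representation theorem produces $h \in L^{p'}_{\lambda}(\DD)$ with $\|h\|_{L^{p'}_{\lambda}} = \|L\|$ such that $\widetilde L(F) = \int_{\DD} F(z)\overline{h(z)}\,d\sigma_{\lambda}(z)$ for every $F \in L^p_{\lambda}(\DD)$. Set $g := P_{\lambda} h$. By Theorem \ref{Bergman-projection-a}, $g \in A^{p'}_{\lambda}(\DD)$ with $\|g\|_{A^{p'}_{\lambda}} \le \|P_{\lambda}\|_{L^{p'}_{\lambda} \to A^{p'}_{\lambda}} \|h\|_{L^{p'}_{\lambda}} =: C_p^{-1}\|L\|$, giving the desired lower inequality $C_p \|g\|_{A^{p'}_{\lambda}} \le \|L\|$.

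It remains to verify the representation $L(f) = \int_{\DD} f\bar g\,d\sigma_{\lambda}$ for every $f \in A^p_{\lambda}(\DD)$. Since $d\sigma_{\lambda}$ is a finite measure, $A^p_{\lambda}(\DD) \subset A^1_{\lambda}(\DD)$, so Proposition \ref{reproducing-Bergman-a} gives the reproducing identity $f(w) = \int_{\DD} f(z) K_{\lambda}(w,z)\,d\sigma_{\lambda}(z)$. Using $\overline{K_{\lambda}(z,w)} = K_{\lambda}(w,z)$ (immediate from the defining series) I would write
\begin{align*}
\int_{\DD} f(z)\overline{g(z)}\,d\sigma_{\lambda}(z)
&= \int_{\DD} f(z) \int_{\DD} \overline{h(w)}\,K_{\lambda}(w,z)\,d\sigma_{\lambda}(w)\,d\sigma_{\lambda}(z) \\
&= \int_{\DD} \overline{h(w)} \int_{\DD} f(z) K_{\lambda}(w,z)\,d\sigma_{\lambda}(z)\,d\sigma_{\lambda}(w)
= \int_{\DD} f(w)\overline{h(w)}\,d\sigma_{\lambda}(w) = L(f),
\end{align*}
the middle equality being Fubini's theorem. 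The main obstacle is precisely justifying this interchange of order of integration, and this is exactly what Corollary \ref{Bergman-projection-a-1} provides: the iterated integral $\int_{\DD}|h(w)|\int_{\DD}|f(z)||K_{\lambda}(w,z)|\,d\sigma_{\lambda}(z)\,d\sigma_{\lambda}(w)$ is dominated by $\|h\|_{L^{p'}_{\lambda}}\|T|f|\|_{L^p_{\lambda}} \lesssim \|h\|_{L^{p'}_{\lambda}}\|f\|_{L^p_{\lambda}} < \infty$ by H\"older's inequality, so Fubini applies. Combining the two directions yields both the representation and the norm comparison $C_p\|g\|_{A^{p'}_{\lambda}} \le \|L\| \le \|g\|_{A^{p'}_{\lambda}}$.
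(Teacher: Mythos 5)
Your proposal is correct and follows essentially the same route as the paper's own proof: Hahn--Banach extension plus the Riesz representation theorem to get $h\in L^{p'}_{\lambda}(\DD)$, then $g=P_{\lambda}h$ with the norm bound from Theorem \ref{Bergman-projection-a}, the Fubini interchange justified by Corollary \ref{Bergman-projection-a-1}, the representation closed by the reproducing identity of Proposition \ref{reproducing-Bergman-a}, and uniqueness read off from the coefficient formula (\ref{coefficient-2-1}). The only cosmetic difference is where you place the complex conjugate on $h$.
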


The proof of the theorem is nearly an adaptation of that of its analog in the classical case given in \cite[p. 37]{DS1}). On the one hand, we extend $L\in A^p_{\lambda}(\DD)^*$, by the Hahn-Banach theorem, to a bounded linear functional on $L^p_{\lambda}(\DD)$ with the same norm, and the Riesz representation theorem asserts that there exists an $h\in L^{p'}_{\lambda}(\DD)$ satisfying $\|h\|_{L_{\lambda}^{p'}(\DD)}=\|L\|$, so that
$L(f)=\int_{\DD}f(z)h(z)d\sigma_{\lambda}(z)$ for all $f\in L^p_{\lambda}(\DD)$. If we put $g(z)=(P_{\lambda}\bar{h})(z)$, then by Theorem \ref{Bergman-projection-a}, $g\in A^{p'}_{\lambda}(\DD)$ and
$\|g\|_{A_{\lambda}^{p'}}\le C_p^{-1}\|h\|_{L_{\lambda}^{p'}(\DD)}=C_p^{-1}\|L\|$. For $f\in A^{p}_{\lambda}(\DD)$, by Proposition \ref{reproducing-Bergman-a} one has
$L(f)=\int_{\DD}\int_{\DD}f(w)K_{\lambda}(z,w)d\sigma_{\lambda}(w)\,h(z)d\sigma_{\lambda}(z)$, and then, by Corollary \ref{Bergman-projection-a-1}, Fubini's theorem gives
\begin{align}\label{Ap-functional-3}
L(f)=\int_{\DD}f(w)\overline{g(w)}\,d\sigma_{\lambda}(w).
\end{align}
If $g(z)=\sum_{n=0}^{\infty}a_{n}\phi_{n}(z)$, then by (\ref{coefficient-2-1}),
$L(\phi_n)=\frac{\lambda+1}{n+\lambda+1}\overline{a_n}$ for $n\in\NN_0$. This shows that $g\in A^{p'}_{\lambda}(\DD)$ is uniquely determined by $L$.
On the other hand, every $g\in A^{p'}_{\lambda}(\DD)$ defines a functional $L\in H^p_{\lambda}(\partial\DD)^*$ by (\ref{Ap-functional-3}), satisfying $\|L\|\le\|g\|_{A_{\lambda}^{p'}}$.

\section{Characterization and interpolation of the $\lambda$-Bergman spaces}

\subsection{Boundedness of some operators}

We consider the following two operators
\begin{align}
(P_{\lambda,1}f)(z)&=
\int_{\DD}f(w)K_{\lambda,1}(z,w)(1-|w|^2)d\sigma_{\lambda}(w),\qquad z\in\DD,\label{Bergman-projection-2}\\
(P_{\lambda,2}f)(z)&=
\int_{\DD}f(w)K_{\lambda,2}(z,w)(1-|w|^2)^2d\sigma_{\lambda}(w),\qquad z\in\DD,\label{Bergman-projection-3}
\end{align}
where
\begin{align}
K_{\lambda,1}(z,w)&=\sum_{n=0}^{\infty}\frac{(n+\lambda+2)(n+\lambda+1)}{\lambda+1}
\phi_{n}(z)\overline{\phi_{n}(w)},\label{Bergman-kernel-5}\\
K_{\lambda,2}(z,w)&=\frac{1}{2\lambda+2}\sum_{n=0}^{\infty}
\frac{\Gamma(n+\lambda+4)}{\Gamma(n+\lambda+1)}
\phi_{n}(z)\overline{\phi_{n}(w)}.\label{Bergman-kernel-6}
\end{align}

Similarly to the $\lambda$-Bergman kernel $K_{\lambda}(z,w)$, it follows from (\ref{phi-bound-1}) and (\ref{phi-bound-2}) that the series in (\ref{Bergman-kernel-5}) and (\ref{Bergman-kernel-6}) are convergent absolutely for $zw\in\DD$ and uniformly for $zw$ in a compact subset of $\DD$; and for fixed $w\in\overline{\DD}$ the function $z\mapsto K_{\lambda,j}(z,w)$ ($j=1,2$) is $\lambda$-analytic in $\DD$.

Moreover we also have

\begin{proposition} \label{reproducing-Bergman-c}
For $f\in L^{1}(\DD;(1-|z|^2)^{j}d\sigma_{\lambda})$ with $j=1$ or $2$, the integrals on the right hand side of (\ref{Bergman-projection-2}) and (\ref{Bergman-projection-3}) are well defined for $z\in\DD$ and are $\lambda$-analytic in $z\in\DD$; and if $f\in L^{1}(\DD;(1-|z|^2)^{j}d\sigma_{\lambda})$ is $\lambda$-analytic in $\DD$, then $P_{\lambda,j}f=f$ with $j=1$ or $2$.
\end{proposition}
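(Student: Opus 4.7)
The plan is to mirror the template of Proposition \ref{reproducing-Bergman-a}. For each of $j=1,2$ three things require verification: (i) the integral in (\ref{Bergman-projection-2}) or (\ref{Bergman-projection-3}) converges absolutely at every $z\in\DD$; (ii) the resulting function is $\lambda$-analytic in $z$; (iii) if $f$ itself is $\lambda$-analytic with expansion $f=\sum c_n\phi_n$, then $P_{\lambda,j}f=f$.

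For (i) and (ii), I observe that the coefficient of $\phi_n(z)\overline{\phi_n(w)}$ in each kernel grows only polynomially, of order $n^{j+1}$, so combining with (\ref{phi-bound-1}) I would show that for $|z|\le r_0<1$ and every $w\in\overline{\DD}$,
\begin{eqnarray*}
|K_{\lambda,j}(z,w)| \lesssim \sum_{n=0}^{\infty}(n+1)^{j+1+2\lambda}r_0^{n} <\infty,
\end{eqnarray*}
uniformly in $w$. This uniform bound together with $f\in L^1(\DD;(1-|w|^2)^j d\sigma_\lambda)$ yields (i) and justifies interchanging sum and integral, producing
\begin{eqnarray*}
(P_{\lambda,j}f)(z)=\sum_{n=0}^{\infty}b_n^{(j)}\phi_n(z),\qquad b_n^{(j)}=a_n^{(j)}\int_\DD f(w)\overline{\phi_n(w)}(1-|w|^2)^j d\sigma_\lambda(w),
\end{eqnarray*}
where $a_n^{(j)}$ is the normalizing coefficient in (\ref{Bergman-kernel-5}) or (\ref{Bergman-kernel-6}). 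Since $|\phi_n(w)|\lesssim(n+1)^{\lambda}$ on $\overline{\DD}$, the numbers $b_n^{(j)}$ grow at most polynomially in $n$; invoking (\ref{phi-bound-2}), the same polynomial growth makes termwise application of $D_{\bar z}$ legitimate on each compact subset of $\DD$, so $D_{\bar z}(P_{\lambda,j}f)=0$ and (ii) follows (equivalently, by Proposition \ref{anal-thm}(iii) the series itself represents a $\lambda$-analytic function).

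For (iii), the crucial input is the weighted orthogonality
\begin{eqnarray*}
\int_\DD \phi_m(w)\overline{\phi_n(w)}(1-|w|^2)^j d\sigma_\lambda(w)=\delta_{m,n}\gamma_{n,j},\qquad \gamma_{n,j}=(\lambda+1)B(n+\lambda+1,j+1),
\end{eqnarray*}
which I would derive by passing to polar coordinates $w=se^{i\varphi}$, using the homogeneity $\phi_n(se^{i\varphi})=s^{n}\phi_n(e^{i\varphi})$, the $L^2_\lambda(\partial\DD)$-orthonormality from Proposition \ref{orthonirmal-basis-a}, and a Beta integral in $s$; a direct check then confirms $a_n^{(j)}\gamma_{n,j}=1$, matching the normalization in (\ref{Bergman-kernel-5})--(\ref{Bergman-kernel-6}). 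Given $\lambda$-analytic $f=\sum c_k\phi_k$, I would first apply the orthogonality to $f_r(w):=f(rw)$, whose series converges uniformly on $\overline{\DD}$ for each fixed $r<1$ by the last assertion of Proposition \ref{anal-thm}(iii), obtaining
\begin{eqnarray*}
\int_\DD f(rw)\overline{\phi_n(w)}(1-|w|^2)^j d\sigma_\lambda(w)=c_n r^{n}\gamma_{n,j};
\end{eqnarray*}
then change variables $w\mapsto w/r$ and let $r\to 1-$ to reach $\int_\DD f(w)\overline{\phi_n(w)}(1-|w|^2)^j d\sigma_\lambda(w)=c_n\gamma_{n,j}$, giving $b_n^{(j)}=c_n$ and hence $P_{\lambda,j}f=f$.

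The main obstacle is the passage $r\to 1-$, since $f$ is only assumed in the weighted $L^1$ and not in $L^1(\DD,d\sigma_\lambda)$. To handle this I would decompose
\begin{eqnarray*}
&&\int_\DD f\overline{\phi_n}(1-|w|^2)^j d\sigma_\lambda-\int_{\DD_r}f\overline{\phi_n}(1-|w|^2/r^2)^j d\sigma_\lambda\\
&&\qquad=\int_{\DD\setminus\DD_r}f\overline{\phi_n}(1-|w|^2)^j d\sigma_\lambda+\int_{\DD_r}f\overline{\phi_n}\big[(1-|w|^2)^j-(1-|w|^2/r^2)^j\big]d\sigma_\lambda.
\end{eqnarray*}
The first summand vanishes as $r\to 1-$ because $|\phi_n|$ is bounded on $\overline{\DD}$, so $f\overline{\phi_n}(1-|w|^2)^j\in L^1(\DD,d\sigma_\lambda)$; the second vanishes by dominated convergence, using the pointwise bound $0\le(1-|w|^2)^j-(1-|w|^2/r^2)^j\le(1-|w|^2)^j$ valid on $\DD_r$ together with the integrable majorant $|f\overline{\phi_n}|(1-|w|^2)^j$.
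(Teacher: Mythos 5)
Your proposal is correct and follows essentially the same route as the paper: uniform boundedness of $K_{\lambda,j}(z,\cdot)$ for $|z|\le r_0$ justifies termwise integration and Proposition \ref{anal-thm}(iii) gives $\lambda$-analyticity, while the reproducing property comes from the weighted orthogonality $\int_{\DD}|\phi_n|^2(1-|w|^2)^j d\sigma_\lambda=(\lambda+1)\Gamma(j+1)\Gamma(n+\lambda+1)/\Gamma(n+\lambda+j+2)$ applied to the dilates $f(rw)$, a change of variables, and the limit $r\to1-$, exactly as in the paper's formula (\ref{Bergman-coefficient-1}). Your explicit dominated-convergence argument for the passage $r\to1-$ merely fills in a step the paper leaves implicit.
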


Indeed as in the proof of Proposition \ref{reproducing-Bergman-a}, by (\ref{phi-bound-1}) termwise integration of
$f(w)K_{\lambda,j}(z,w)$ over $\DD$ with respect to the measure $(1-|w|^2)^{j}d\sigma_{\lambda}(w)$ is legitimate, and by Proposition \ref{anal-thm}(iii) the resulting series represents a $\lambda$-analytic function in $\DD$.
If $f\in L^{1}(\DD;(1-|z|^2)^{j}d\sigma_{\lambda})$ is $\lambda$-analytic in $\DD$, by Proposition \ref{anal-thm}(iii) it has the representation $f(z)=\sum_{n=0}^{\infty}c_{n}\phi_{n}(z)$ for $z\in\DD$, and since, by Proposition \ref{orthonirmal-basis-a},
\begin{align*}
\int_{\DD}\left|\phi_{n}(z)\right|^2\,(1-|z|^2)^{j}d\sigma_{\lambda}(z)
=\frac{(\lambda+1)\Gamma(j+1)\Gamma(n+\lambda+1)}{\Gamma(n+\lambda+j+2)},
\end{align*}
it follows that, for $r\in(0,1)$,
$$
\int_{\DD}\overline{\phi_{n}(z)}f(rz)\,(1-|z|^2)^{j}d\sigma_{\lambda}(z)=\frac{(\lambda+1)\Gamma(j+1)\Gamma(n+\lambda+1)}{\Gamma(n+\lambda+j+2)}r^nc_n.
$$
Making change of variables $z\mapsto z/r$, one has
$$
\int_{\DD_r}\overline{\phi_{n}(z)}f(z)\,(r^2-|z|^2)^{j}d\sigma_{\lambda}(z)=\frac{(\lambda+1)\Gamma(j+1)\Gamma(n+\lambda+1)}{\Gamma(n+\lambda+j+2)}r^{2n+2\lambda+2j+2}c_n,
$$
and then, letting $r\rightarrow1-$ yields
\begin{align}\label{Bergman-coefficient-1}
\int_{\DD}\overline{\phi_{n}(z)}f(z)\,(1-|z|^2)^{j}d\sigma_{\lambda}(z)=\frac{(\lambda+1)\Gamma(j+1)\Gamma(n+\lambda+1)}{\Gamma(n+\lambda+j+2)}c_n.
\end{align}
Finally termwise integration of
$f(w)K_{\lambda,j}(z,w)$ over $\DD$ with respect to the measure $(1-|w|^2)^{j}d\sigma_{\lambda}(w)$ proves $P_{\lambda,j}f=f$ with $j=1$ or $2$.

Similarly to Lemma \ref{Bergman-kernel-a}, we have
\begin{lemma}\label{Bergman-kernel-b}
For $j=1,2$ and for $|zw|<1$,
\begin{align}\label{Bergman-kernal-7}
|K_{\lambda,j}(z,w)|\lesssim \frac{(|1-z\overline{w}|+|1-zw|)^{-2\lambda}}{|1-z\overline{w}|}
\left(\frac{1}{|1-z\overline{w}|^{j+1}}+\frac{1}{|1-zw|^{j+1}}\right).
\end{align}
\end{lemma}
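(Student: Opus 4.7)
The strategy is to mimic the argument of Lemma \ref{Bergman-kernel-a}, reducing everything to bounding iterated radial derivatives of the $\lambda$-Cauchy kernel $C(z,w)$. Since $\phi_n(re^{i\theta})=r^n\phi_n(e^{i\theta})$, the operator $D:=r\partial_r$ (with $w$ fixed and $z=re^{i\theta}$) satisfies $D\phi_n(z)=n\phi_n(z)$, and therefore from \eqref{Cauchy-kernel-2-1}, \eqref{Bergman-kernel-5} and \eqref{Bergman-kernel-6} one has the termwise identities
\begin{align*}
(\lambda+1)K_{\lambda,1}(z,w)&=(D+\lambda+1)(D+\lambda+2)C(z,w),\\
2(\lambda+1)K_{\lambda,2}(z,w)&=(D+\lambda+1)(D+\lambda+2)(D+\lambda+3)C(z,w).
\end{align*}
Expanding each product, it suffices to establish, for $k=0,1,\dots,j+1$, the pointwise estimate
\begin{equation*}
|D^kC(z,w)|\lesssim\frac{(|1-z\bar w|+|1-zw|)^{-2\lambda}}{|1-z\bar w|}\left(\frac{1}{|1-z\bar w|^k}+\frac{1}{|1-zw|^k}\right),
\end{equation*}
since the dominant contribution comes from $k=j+1$ and matches exactly the right hand side of \eqref{Bergman-kernal-7}. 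For $k=0$ this follows from Lemma \ref{Cauchy-estimate-a} after absorbing the logarithmic factor $\ln(|1-z\bar w|^2/|1-zw|^2+2)$ into $1+|1-z\bar w|/|1-zw|$; for $k=1$ it is precisely what is established internally in the proof of Lemma \ref{Bergman-kernel-a}.

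I would then prove the bound on $|D^kC(z,w)|$ for $k=2,3$ by extending the case analysis used for Lemma \ref{Bergman-kernel-a}, splitting into the regimes $|1-z\bar w|^2\le 2|1-zw|^2$ and $2|1-zw|^2<|1-z\bar w|^2$. In the first regime, start from $C(z,w)=(1-z\bar w)^{-1}|1-zw|^{-2\lambda}\,{}_2F_1(\lambda,\lambda;2\lambda+1;A)$ with $A=4(\operatorname{Im}z)(\operatorname{Im}w)/|1-zw|^2$; in the second, from the analogous representation in terms of $\tilde A=-4(\operatorname{Im}z)(\operatorname{Im}w)/|1-z\bar w|^2$. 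Iterating $D$ produces a finite sum of terms, each a product of: powers of $(1-z\bar w)^{-1}$ and $|1-zw|^{-1}$, elementary rational factors such as $(1-|zw|^2)/|1-zw|^2$ coming from $DA$ and $D|1-zw|^{-2\lambda}$, and a hypergeometric factor of the form ${}_2F_1(\lambda+m,\lambda+m';2\lambda+1+m'';\cdot)$ whose size is controlled by the contiguous relations \cite[2.1(7), 2.1(23)]{Er} together with \eqref{Gauss-1}. The payoff is that each application of $D$ raises the singularity at either $1-z\bar w$ or $1-zw$ by exactly one power, while the auxiliary factors remain bounded uniformly within each regime.

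The main obstacle is the bookkeeping for $k=3$, where differentiating $(1-|zw|^2)/|1-zw|^2$ produces additional terms and the hypergeometric parameters shift; in the second regime one must moreover dominate a residual logarithmic factor by the power singularity generated by the very differentiation that produced it, via the elementary inequality $s\ln(s^{-2}+2)\lesssim1$ for $s\in(0,2]$ already used in Lemma \ref{Bergman-kernel-a}. Once the estimate for $|D^kC(z,w)|$ is established for $k\le j+1$, substituting back into the polynomial expression for $K_{\lambda,j}$ and taking absolute values yields \eqref{Bergman-kernal-7} at once.
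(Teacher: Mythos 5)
Your proposal follows essentially the same route as the paper: the paper likewise writes $(\lambda+1)K_{\lambda,1}=\bigl[(r\frac{d}{dr})^{2}+(2\lambda+3)r\frac{d}{dr}+(\lambda+2)(\lambda+1)\bigr]C(z,w)$ (which is your factorization $(D+\lambda+1)(D+\lambda+2)$), does the analogous reduction for $j=2$, and then reduces the lemma to the bounds on $\bigl|(r\frac{d}{dr})^{k}C(z,w)\bigr|$ for $k=2,3$, explicitly leaving those computations "along the line of the proof of Lemma \ref{Bergman-kernel-a}" to the reader. Your sketch of the two-regime iteration with the contiguous relations is exactly the intended completion, so the approaches coincide.
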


On the case with $j=1$, for $|zw|<1$ and $z=re^{i\theta}$ one has
\begin{align}\label{Bergman-kernal-8}
(\lambda+1)K_{\lambda,1}(z,w)=\left[\left(r\frac{d}{dr}\right)^2+(2\lambda+3)r\frac{d}{dr}+(\lambda+2)(\lambda+1)\right]C(z,w).
\end{align}
Since, as indicated in the proof of Lemma \ref{Bergman-kernel-a}, $\left|r\frac{d}{dr}\left[C(z,w)\right]\right|$ has the same upper bound as in (\ref{Bergman-kernal-2}),
on account of Lemma \ref{Cauchy-estimate-a} it suffices to show
\begin{align*}
\left|\left(r\frac{d}{dr}\right)^2\left[C(z,w)\right]\right|\lesssim \frac{(|1-z\overline{w}|+|1-zw|)^{-2\lambda}}{|1-z\overline{w}|}
\left(\frac{1}{|1-z\overline{w}|^{2}}+\frac{1}{|1-zw|^{2}}\right).
\end{align*}
This can be completed along the line of the proof of Lemma \ref{Bergman-kernel-a}, although computations are more complicated. We leave the details to the readers.

Correspondingly, on the case with $j=2$ we have a similar formula to (\ref{Bergman-kernal-8}), and the proof of (\ref{Bergman-kernal-7}) is reduced to show
\begin{align*}
\left|\left(r\frac{d}{dr}\right)^3\left[C(z,w)\right]\right|\lesssim \frac{(|1-z\overline{w}|+|1-zw|)^{-2\lambda}}{|1-z\overline{w}|}
\left(\frac{1}{|1-z\overline{w}|^{3}}+\frac{1}{|1-zw|^{3}}\right).
\end{align*}
We again leave the details to the readers.

We will also need the operator $\widetilde{P}_{\lambda,1}$ and $\widetilde{P}_{\lambda,2}$ defined by
\begin{align}
(\widetilde{P}_{\lambda,1}f)(z)&=\int_{\DD}f(w)\widetilde{K}_{\lambda,1}(z,w)(1-|w|^{2})d\sigma_{\lambda}(w),\label{Bergman-projection-4}\\
(\widetilde{P}_{\lambda,2}f)(z)&=\int_{\DD}f(w)\widetilde{K}_{\lambda,2}(z,w)d\sigma_{\lambda}(w),\label{Bergman-projection-5}
\end{align}
where
\begin{align}
\widetilde{K}_{\lambda,1}(z,w)&=\sum_{n=0}^{\infty}
\frac{(n+\lambda+3)(n+\lambda+2)}{2\lambda+2}\phi_{n}(z)\overline{\phi_{n}(w)},\label{Bergman-kernel-9}\\
\widetilde{K}_{\lambda,2}(z,w)&=K_{\lambda,2}(z,w)(1-|z|^2)(1-|w|^2).\label{Bergman-kernel-10}
\end{align}

Similarly to Lemma \ref{Bergman-kernel-b} with $j=1$, we have
\begin{lemma}\label{Bergman-kernel-c}
For $|zw|<1$,
\begin{align*}
|\widetilde{K}_{\lambda,1}(z,w)|\lesssim \frac{(|1-z\overline{w}|+|1-zw|)^{-2\lambda}}{|1-z\overline{w}|}
\left(\frac{1}{|1-z\overline{w}|^{2}}+\frac{1}{|1-zw|^{2}}\right).
\end{align*}
\end{lemma}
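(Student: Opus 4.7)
The plan is to reduce the estimate for $\widetilde{K}_{\lambda,1}(z,w)$ to the bound already (implicitly) established for $K_{\lambda,1}$ in Lemma \ref{Bergman-kernel-b}, since the two kernels differ only by a quadratic polynomial in $n$ in their coefficients. First, I would observe from Proposition \ref{orthonirmal-basis-a} that $\phi_n(re^{i\theta})$ is $r^n$ times a function of $\theta$ alone, so that the Euler operator $r\frac{d}{dr}$ (with $\theta$ and $w$ held fixed) acts on $\phi_n(z)\overline{\phi_n(w)}$ by multiplication by $n$. Writing
$$(n+\lambda+3)(n+\lambda+2)=n^{2}+(2\lambda+5)n+(\lambda+3)(\lambda+2),$$
and using the absolute and locally uniform convergence of the series (guaranteed by (\ref{phi-bound-1})) to justify termwise differentiation in $r$, one obtains
$$(2\lambda+2)\widetilde{K}_{\lambda,1}(z,w)=\Bigl[\Bigl(r\tfrac{d}{dr}\Bigr)^{2}+(2\lambda+5)\,r\tfrac{d}{dr}+(\lambda+3)(\lambda+2)\Bigr]C(z,w),\qquad z=re^{i\theta},$$
in exact parallel to (\ref{Bergman-kernal-8}).

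Next, I would bound each of the three terms separately. The zeroth-order term $C(z,w)$ is controlled by Lemma \ref{Cauchy-estimate-a} together with the elementary inequality $s^{-1}\ln(s^{2}t^{-2}+2)\lesssim s^{-1}+t^{-1}$, and trivially satisfies the desired estimate (with powers to spare). The first-order term $r\frac{d}{dr}C(z,w)$ is precisely what was estimated inside the proof of Lemma \ref{Bergman-kernel-a}, namely by
$$\Bigl|r\tfrac{d}{dr}C(z,w)\Bigr|\lesssim\frac{(|1-z\bar w|+|1-zw|)^{-2\lambda}}{|1-z\bar w|}\Bigl(\frac{1}{|1-z\bar w|}+\frac{1}{|1-zw|}\Bigr),$$
again dominated by the target bound. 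The only genuinely new ingredient is the second-order estimate
$$\Bigl|\bigl(r\tfrac{d}{dr}\bigr)^{2}C(z,w)\Bigr|\lesssim\frac{(|1-z\bar w|+|1-zw|)^{-2\lambda}}{|1-z\bar w|}\Bigl(\frac{1}{|1-z\bar w|^{2}}+\frac{1}{|1-zw|^{2}}\Bigr),$$
which however is exactly the statement the paper has already invoked (and left to the reader) in the $j=1$ case of Lemma \ref{Bergman-kernel-b}.

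Combining the three pieces then yields the claimed bound on $\widetilde{K}_{\lambda,1}$. The main technical point—and the step I expect to be the real work if one were to carry out the details—is the second-order estimate on $(r\frac{d}{dr})^{2}C(z,w)$: as in the proof of Lemma \ref{Bergman-kernel-a}, one must split into the two regimes $|1-z\bar w|^{2}\le 2|1-zw|^{2}$ and $2|1-zw|^{2}<|1-z\bar w|^{2}$, choose the appropriate one of the two hypergeometric representations in (\ref{Poi-0}) in each regime so that the argument of ${}_2F_1$ stays away from $1$ (and so the logarithmic blow-up in (\ref{Gauss-1}) is absorbed by a power of either $|1-z\bar w|^{-1}$ or $|1-zw|^{-1}$), and then apply the contiguous relations \cite[2.1(7), 2.1(23)]{Er} twice to propagate the derivative $r\frac{d}{dr}$ through ${}_2F_1$. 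This is precisely the same scheme already used (once) in the proof of Lemma \ref{Bergman-kernel-a} and (twice) in the $j=1$ case of Lemma \ref{Bergman-kernel-b}, merely with one more layer of book-keeping, so there is no conceptual obstacle; accordingly the details can be safely left to the reader in the same spirit as the preceding lemma.
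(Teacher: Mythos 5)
Your proposal is correct and follows essentially the same route as the paper, which proves this lemma simply by asserting it is "similar to Lemma \ref{Bergman-kernel-b} with $j=1$": you make that analogy explicit by deriving the operator identity $(2\lambda+2)\widetilde{K}_{\lambda,1}=[(r\frac{d}{dr})^{2}+(2\lambda+5)r\frac{d}{dr}+(\lambda+3)(\lambda+2)]C(z,w)$ (the correct analogue of (\ref{Bergman-kernal-8})) and reducing everything to the bound on $(r\frac{d}{dr})^{2}C(z,w)$ that the paper already invoked and left to the reader there. The only new work is that second-order estimate, and your sketch of it (the two regimes, the choice of hypergeometric representation, the contiguous relations applied twice) matches the paper's intended scheme.
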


\begin{theorem}\label{Bergman-projection-b}
For $1\le p<\infty$, the operators $P_{\lambda,1}$, $P_{\lambda,2}$ and $\widetilde{P}_{\lambda,1}$ are bounded from $L_{\lambda}^{p}(\DD)$ into $A^{p}_{\lambda}(\DD)$.
\end{theorem}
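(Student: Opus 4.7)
The plan is to mimic the proof of Theorem \ref{Bergman-projection-a}, adapting Schur's test to the weighted kernels, with the extra weight $(1-|w|^2)^j$ now providing enough regularization to accommodate $p=1$ as well. Once boundedness from $L_{\lambda}^p(\DD)$ into itself is established, the fact that the images are $\lambda$-analytic is already contained in Proposition \ref{reproducing-Bergman-c}, so the images indeed lie in $A^{p}_{\lambda}(\DD)$.

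For $1<p<\infty$, I would apply Schur's theorem with the test function $\varphi(z)=(1-|z|^2)^{-1/(pp')}$. The two dual conditions to verify are
\begin{align*}
\int_{\DD}|K_{\lambda,j}(z,w)|(1-|w|^2)^{j-1/p}d\sigma_{\lambda}(w)&\lesssim(1-|z|^2)^{-1/p},\\
\int_{\DD}|K_{\lambda,j}(z,w)|(1-|z|^2)^{-1/p'}d\sigma_{\lambda}(z)&\lesssim(1-|w|^2)^{-1/p'-j}.
\end{align*}
Using Lemma \ref{Bergman-kernel-b}, writing $z=re^{i\theta}$, $w=se^{i\varphi}$, and exploiting the symmetry $\varphi\mapsto-\varphi$ as in Theorem \ref{Bergman-projection-a}, one reduces the left-hand sides to integrals of the type
\begin{align*}
\int_0^1\!\!\int_{-\pi}^{\pi}\frac{(1-s)^{a}|\sin\varphi|^{2\lambda}s^{2\lambda+1}\,d\varphi\,ds}{(1-rs+|\sin\theta|+|\sin\varphi|)^{2\lambda}(1-rs+|\sin(\theta-\varphi)/2|)^{j+2}}.
\end{align*}
Since $|\sin\varphi|^{2\lambda}\le(1-rs+|\sin\theta|+|\sin\varphi|)^{2\lambda}$, the $2\lambda$-factor cancels; the inner $\varphi$-integral is $\asymp(1-rs)^{-(j+1)}$, and the remaining one-dimensional Forelli--Rudin estimate $\int_0^1(1-s)^{a}(1-rs)^{-(j+1)}ds\asymp(1-r)^{a-j}$ (valid for $a>-1$ and $j-a>0$, both of which hold here since $j\ge1$) produces the required bounds on both Schur conditions.

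For $p=1$, Schur's test is unavailable, but Fubini's theorem yields
\begin{align*}
\|P_{\lambda,j}f\|_{L^{1}_{\lambda}(\DD)}\le\int_{\DD}|f(w)|(1-|w|^2)^{j}\left(\int_{\DD}|K_{\lambda,j}(z,w)|d\sigma_{\lambda}(z)\right)d\sigma_{\lambda}(w).
\end{align*}
By the same reduction, this inner integral is bounded by $\int_0^1 r^{2\lambda+1}(1-rs)^{-(j+1)}dr\asymp(1-s)^{-j}\asymp(1-|w|^2)^{-j}$, so the $(1-|w|^2)^j$ factor exactly cancels it and gives $\|P_{\lambda,j}f\|_{L^{1}_{\lambda}(\DD)}\lesssim\|f\|_{L^{1}_{\lambda}(\DD)}$. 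This is where the weight $(1-|w|^2)^j$ with $j\ge1$ is crucial: it is precisely the strength of this weight that rescues the $p=1$ endpoint, unlike the unweighted Bergman projection.

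For $\widetilde{P}_{\lambda,1}$, Lemma \ref{Bergman-kernel-c} gives the same pointwise bound on $\widetilde{K}_{\lambda,1}(z,w)$ as Lemma \ref{Bergman-kernel-b} gives for $K_{\lambda,1}$, so the argument for the case $j=1$ applies verbatim. The main obstacle I anticipate is bookkeeping in the Forelli--Rudin reductions, particularly ensuring that the exponents $j-1/p$ and $-1/p'$ remain in the range where the one-dimensional integrals converge and yield the sharp power of $(1-r)$ or $(1-s)$; this is where the condition $j\ge1$ enters decisively, since it makes $j-1/p>-1$ automatic for all $p\ge1$ and simultaneously keeps $j-(j-1/p)-1=1/p-1<0$ (equivalently $1/p>0$) so that the integral has the correct singular behavior.
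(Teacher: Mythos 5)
Your proposal is correct and follows essentially the same route as the paper: the weighted kernel estimate $\int_{\DD}|K_{\lambda,j}(z,w)|(1-|w|^2)^{j-\alpha}d\sigma_{\lambda}(w)\lesssim(1-|z|^2)^{-\alpha}$ derived from Lemma \ref{Bergman-kernel-b}, Schur's test with $h(z)=(1-|z|^2)^{-1/(pp')}$ for $1<p<\infty$, a direct Fubini argument at $p=1$ exploiting the weight $(1-|w|^2)^j$, and Proposition \ref{reproducing-Bergman-c} for $\lambda$-analyticity of the image. The only blemish is the arithmetic in your closing parenthetical (the relevant convergence condition is $j-(j-1/p)=1/p>0$, not $1/p-1<0$), which does not affect the argument since your stated conditions $a>-1$ and $j-a>0$ are the correct ones.
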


\begin{proof}
We first show that, for $0<\alpha<j+1$ with $j=1$ or $2$,
\begin{align}\label{Bergman-kernal-12}
I_{j,\alpha}(z):=\int_{\mathbb{D}}|K_{\lambda,j}(z,w)|(1-|w|^{2})^{j-\alpha}d\sigma_{\lambda}(w)\lesssim(1-|z|^2)^{-\alpha}, \qquad z\in\DD.
\end{align}

Similarly to (\ref{Bergman-kernal-3}) and (\ref{Bergman-kernal-4}), it follows from (\ref{Bergman-kernal-7}) that, for $z=re^{i\theta}$, $w=se^{i\varphi}\in\DD$,
\begin{align*}
|K_{\lambda,j}(z,w)|\lesssim \Phi_{j,r,\theta}(s,\varphi)+\Phi_{j,r,\theta}(s,-\varphi),
\end{align*}
where
\begin{align*}
 \Phi_{j,r,\theta}(s,\varphi)
 =\frac{\left(1-rs+|\sin\theta|+|\sin\varphi|\right)^{-2\lambda}}{\left(1-rs+\left|\sin(\theta-\varphi)/2\right|\right)^{j+2}}.
\end{align*}
We have
\begin{align*}
\int_{\mathbb{D}}|K_{\lambda,j}(z,w)|(1-|w|^2)^{j-\alpha}d\sigma_{\lambda}(w)
\lesssim\int_0^1\int_{-\pi}^{\pi}\Phi_{j,r,\theta}(s,\varphi)(1-s)^{j-\alpha}|\sin\varphi|^{2\lambda}d\varphi ds,
\end{align*}
since the contribution of $\Phi_{j,r,\theta}(s,-\varphi)$ to the integral is the same as that of $\Phi_{j,r,\theta}(s,\varphi)$.
Thus
\begin{align*}
\int_{\mathbb{D}}|K_{\lambda,j}(z,w)|(1-|w|^2)^{j-\alpha}d\sigma_{\lambda}(w)
\lesssim\int_0^1\int_{-\pi}^{\pi}\frac{(1-s)^{j-\alpha}}{\left(1-rs+\left|\sin(\theta-\varphi)/2\right|\right)^{j+2}}\,d\varphi ds,
\end{align*}
from which (\ref{Bergman-kernal-12}) is yields after elementary calculations.

For $f\in L_{\lambda}^{1}(\DD)$, we have
\begin{align*}
\int_{\DD}\left|(P_{\lambda,j}f)(z)\right|d\sigma_{\lambda}(z)\lesssim
\int_{\DD}|f(w)|\int_{\DD}|K_{\lambda,j}(z,w)|d\sigma_{\lambda}(z)(1-|w|^2)^{j}d\sigma_{\lambda}(w).
\end{align*}
Applying (\ref{Bergman-kernal-12}) with $\alpha=j$ gives $\|P_{\lambda,j}f\|_{L_{\lambda}^{1}(\DD)}\lesssim\|f\|_{L_{\lambda}^{1}(\DD)}$.

For $1<p<\infty$, take $h(z)=(1-|z|^2)^{-1/pp'}$, where $p^{-1}+p^{'-1}=1$. Applying (\ref{Bergman-kernal-12}), first with $\alpha=p^{-1}$, one has
\begin{align*}
\int_{\mathbb{D}}|K_{\lambda,j}(z,w)|(1-|w|^{2})^{j}h(w)^{p'}d\sigma_{\lambda}(w)\lesssim h(z)^{p'}, \qquad z\in\DD,
\end{align*}
and then, with $\alpha=j+p'^{-1}$,
\begin{align*}
\int_{\mathbb{D}}|K_{\lambda,j}(z,w)|(1-|w|^{2})^{j}h(z)^{p}d\sigma_{\lambda}(z)\lesssim h(w)^{p}, \qquad z\in\DD.
\end{align*}
By Schur's theorem, $\|P_{\lambda,j}f\|_{L_{\lambda}^{p}(\DD)}\lesssim\|f\|_{L_{\lambda}^{p}(\DD)}$ for $j=1$ or $2$. Furthermore,
Proposition \ref{reproducing-Bergman-c} implies that the images of $f\in L_{\lambda}^{p}(\DD)$ ($1\le p<\infty$) under the mappings $P_{\lambda,1}$ and $P_{\lambda,2}$ are $\lambda$-analytic in $\DD$.

The assertion on the operator $\widetilde{P}_{\lambda,1}$ can be proved by the same way above.
\end{proof}

\begin{theorem}\label{Bergman-projection-c}
For $1\le p\le\infty$, the operator $\widetilde{P}_{\lambda,2}$ is bounded from $L_{\lambda}^{p}(\DD)$ into itself.
\end{theorem}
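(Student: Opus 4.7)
The plan is to reduce the $L^p$ boundedness of $\widetilde P_{\lambda,2}$ to a pair of uniform kernel estimates and then invoke Schur's theorem (or equivalently Riesz--Thorin interpolation between the $L^1$ and $L^\infty$ endpoints). The crucial input is already contained in the proof of Theorem \ref{Bergman-projection-b}, namely the estimate (\ref{Bergman-kernal-12}) with $j=2$ and $\alpha=1$:
\begin{align*}
\int_\DD |K_{\lambda,2}(z,w)|(1-|w|^2)\, d\sigma_\lambda(w) \lesssim (1-|z|^2)^{-1},\qquad z\in\DD.
\end{align*}
Multiplying both sides by $(1-|z|^2)$ produces the uniform bound
\begin{align*}
\int_\DD |\widetilde K_{\lambda,2}(z,w)|\, d\sigma_\lambda(w)
=(1-|z|^2)\int_\DD |K_{\lambda,2}(z,w)|(1-|w|^2)\, d\sigma_\lambda(w) \lesssim 1,
\end{align*}
which immediately gives the $L^\infty$ bound $\|\widetilde P_{\lambda,2}f\|_{L^\infty(\DD)}\lesssim\|f\|_{L^\infty(\DD)}$.

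For the $L^1$ endpoint I would exploit the symmetry of the kernel. Inspecting the series representation (\ref{Bergman-kernel-6}) shows that the coefficients $\frac{\Gamma(n+\lambda+4)}{\Gamma(n+\lambda+1)}$ are real and positive, so $\overline{K_{\lambda,2}(z,w)}=K_{\lambda,2}(w,z)$ and hence $|\widetilde K_{\lambda,2}(z,w)|=|\widetilde K_{\lambda,2}(w,z)|$. Swapping the roles of $z$ and $w$ in the previous display therefore yields
\begin{align*}
\int_\DD |\widetilde K_{\lambda,2}(z,w)|\, d\sigma_\lambda(z) \lesssim 1,\qquad w\in\DD.
\end{align*}
Combining this with Fubini's theorem gives $\|\widetilde P_{\lambda,2}f\|_{L^1_\lambda(\DD)}\lesssim \|f\|_{L^1_\lambda(\DD)}$ for $f\in L^1_\lambda(\DD)$.

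For the intermediate range $1<p<\infty$ I would then simply apply Schur's theorem (\cite[p.~52]{Zhu1}) with the constant test function $h\equiv 1$: both Schur hypotheses are exactly the two uniform bounds just proved. Alternatively one can interpolate the $L^1$ and $L^\infty$ endpoint estimates via the Riesz--Thorin theorem. The bulk of the technical work is carried by the kernel estimate in Lemma \ref{Bergman-kernel-b} with $j=2$ (which has already been granted in the paper), so no new obstacle arises here; the only point requiring a remark is the symmetry $|\widetilde K_{\lambda,2}(z,w)|=|\widetilde K_{\lambda,2}(w,z)|$, which is transparent from the coefficients in (\ref{Bergman-kernel-6}).
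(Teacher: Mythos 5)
Your proposal is correct and follows essentially the same route as the paper: the paper's proof likewise combines the symmetry $\overline{\widetilde{K}_{\lambda,2}(z,w)}=\widetilde{K}_{\lambda,2}(w,z)$ with the estimate (\ref{Bergman-kernal-12}) for $j=2$, $\alpha=1$ to get the uniform bound $\int_{\DD}|\widetilde{K}_{\lambda,2}(z,w)|\,d\sigma_{\lambda}(w)\lesssim1$, from which $L^p$ boundedness for all $1\le p\le\infty$ follows. Your write-up merely makes explicit the Schur/interpolation step that the paper leaves implicit.
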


\begin{proof}
We note that $\widetilde{K}_{\lambda,2}(z,w)=\overline{\widetilde{K}_{\lambda,2}(w,z)}$, and by means of (\ref{Bergman-kernal-12}) with $j=2$ and $\alpha=1$,
\begin{align*}
\int_{\DD}|\widetilde{K}_{\lambda,2}(z,w)|d\sigma_{\lambda}(w)\lesssim1.
\end{align*}
This shows that, for $1\le p\le\infty$, the operator $\widetilde{P}_{\lambda,2}$ is bounded from $L^{p}_{\lambda}(\DD)$ into itself.
\end{proof}

\subsection{A characterization of the $\lambda$-Bergman spaces by the operator $D_z$}

Characterizations of the usual Bergman spaces by derivatives can be found in \cite[pp. 56-58]{Zhu1}. Here we make a modification, which is more apt for the $\lambda$-Bergman spaces.

\begin{lemma}\label{derivative-Bergman-a}
For $1\le p\le\infty$, the mapping $f(z)\mapsto(1-|z|^{2})D_{z}\left(zf(z)\right)$ is bounded from $A^{p}_{\lambda}(\DD)$ into $L^{p}_{\lambda}(\DD)$.
\end{lemma}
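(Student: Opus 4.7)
The strategy is to write $(1-|z|^2)D_z(zf(z))$ as an integral transform of $f$ with a kernel controlled by the estimates of Lemma \ref{Bergman-kernel-b}, and then invoke Schur's test. This parallels the classical derivative characterizations of Bergman spaces, but adapted so that $D_z(z\phi_n)=(n+\lambda+1)\phi_n$ from (\ref{Tzphi-2}) plays the role of ordinary differentiation.

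First, since $\sigma_\lambda(\DD)=1$ and $p\ge 1$, H\"older's inequality gives $A^p_\lambda(\DD)\subset L^1(\DD;(1-|w|^2)d\sigma_\lambda)$. By Proposition \ref{reproducing-Bergman-c} with $j=1$,
\begin{align*}
f(z)=\int_\DD f(w)K_{\lambda,1}(z,w)(1-|w|^2)\,d\sigma_\lambda(w),\qquad z\in\DD.
\end{align*}
The next step is to move $D_z(z\cdot)$ inside the integral. Using (\ref{phi-bound-1}) and (\ref{phi-bound-2}), the termwise-differentiated series for $zK_{\lambda,1}(z,w)$ converges uniformly on $K\times\overline{\DD}$ for every compact $K\subset\DD$, so dominated convergence lets us pass $\partial_z$ inside; the reflection piece of $D_z$ is linear in $f$ and passes inside trivially. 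Combined with $D_z(z\phi_n(z))=(n+\lambda+1)\phi_n(z)$, this gives
\begin{align*}
D_z(zf(z))=\int_\DD f(w)D_z(zK_{\lambda,1}(z,w))(1-|w|^2)\,d\sigma_\lambda(w).
\end{align*}

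The decisive algebraic step is the kernel identity. Termwise computation yields
\begin{align*}
D_z(zK_{\lambda,1}(z,w))=\sum_{n=0}^\infty\frac{(n+\lambda+2)(n+\lambda+1)^2}{\lambda+1}\phi_n(z)\overline{\phi_n(w)},
\end{align*}
and the telescoping
$(n+\lambda+2)(n+\lambda+1)^2=(n+\lambda+3)(n+\lambda+2)(n+\lambda+1)-2(n+\lambda+2)(n+\lambda+1)$
together with (\ref{Bergman-kernel-5}) and (\ref{Bergman-kernel-6}) gives the clean formula
\begin{align*}
D_z(zK_{\lambda,1}(z,w))=2K_{\lambda,2}(z,w)-2K_{\lambda,1}(z,w).
\end{align*}
Consequently $(1-|z|^2)D_z(zf(z))=\int_\DD f(w)H(z,w)\,d\sigma_\lambda(w)$, with
$H(z,w)=2(1-|z|^2)(1-|w|^2)[K_{\lambda,2}(z,w)-K_{\lambda,1}(z,w)]$.

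It then suffices to check $\sup_z\int_\DD|H(z,w)|d\sigma_\lambda(w)<\infty$ and $\sup_w\int_\DD|H(z,w)|d\sigma_\lambda(z)<\infty$, which yields $L^p_\lambda$-boundedness for all $p\in[1,\infty]$ by interpolation between $L^1$ and $L^\infty$. The $K_{\lambda,2}$ contribution equals $\pm\widetilde K_{\lambda,2}$, whose $w$- and $z$-integrals are both $\lesssim 1$ by the estimate $\int_\DD|\widetilde K_{\lambda,2}(z,w)|d\sigma_\lambda(w)\lesssim 1$ appearing in the proof of Theorem \ref{Bergman-projection-c}, together with the Hermitian symmetry $\widetilde K_{\lambda,2}(z,w)=\overline{\widetilde K_{\lambda,2}(w,z)}$. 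For the $K_{\lambda,1}$ contribution, pick any $\alpha\in(0,1)$ and apply (\ref{Bergman-kernal-12}) with $j=1$; since $(1-|w|^2)\le(1-|w|^2)^{1-\alpha}$,
\begin{align*}
(1-|z|^2)\int_\DD(1-|w|^2)|K_{\lambda,1}(z,w)|\,d\sigma_\lambda(w)\lesssim(1-|z|^2)^{1-\alpha}\lesssim 1,
\end{align*}
and the $z$-integral is handled by the same estimate after invoking $|K_{\lambda,1}(z,w)|=|K_{\lambda,1}(w,z)|$.

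The main obstacle is not computational but structural: one must notice that the extraneous factor $(1-|z|^2)$ in front of $D_z(zf(z))$ exactly compensates the size-worsening induced by differentiation, so that after substituting the reproducing kernel the resulting integral kernel lies in the borderline class handled by Schur's test. Pinpointing the telescoping identity that exhibits $D_z(zK_{\lambda,1})$ as a combination of the already-estimated kernels $K_{\lambda,1}$ and $K_{\lambda,2}$ is what makes the argument succeed for the full range $1\le p\le\infty$, including the endpoints.
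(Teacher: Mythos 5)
Your proof is correct and is essentially the paper's argument in a different guise: the identity $D_z(zK_{\lambda,1}(z,w))=2K_{\lambda,2}(z,w)-2K_{\lambda,1}(z,w)$, combined with the reproducing property of $K_{\lambda,1}$, is exactly the paper's formula $(1-|z|^2)D_z(zf(z))=2(\widetilde{P}_{\lambda,2}f)(z)-2(1-|z|^2)f(z)$, which the paper obtains by manipulating the coefficients $c_n$ via (\ref{Bergman-coefficient-1}) rather than by differentiating the kernel. The only inessential difference is that you keep the second term as a weighted $K_{\lambda,1}$-integral and estimate it by (\ref{Bergman-kernal-12}), whereas the paper collapses it to $-2(1-|z|^2)f(z)$, whose $L^p_\lambda$-bound is trivial; both treatments of the $K_{\lambda,2}$ part rest on the same integrability of $\widetilde{K}_{\lambda,2}$ from Theorem \ref{Bergman-projection-c}.
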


\begin{proof}
For $f\in A^{p}_{\lambda}(\DD)$, assume that $f(z)=\sum_{n=0}^{\infty}c_{n}\phi_{n}(z)$ ($z\in\DD$) by Proposition \ref{anal-thm}(iii). By (\ref{phi-bound-2}),
taking termwise differentiation $\partial_z$ in $\DD$ to $zf(z)$ is legitimate,
and then, by (\ref{Tzphi-2}),
\begin{align}\label{derivative-analytic-1}
D_{z}\left(zf(z)\right)=\sum_{n=0}^{\infty}c_{n}(n+\lambda+1)\phi_{n}(z),
\end{align}
and from (\ref{Bergman-coefficient-1}),
\begin{align*}
(n+\lambda+1)c_n=\frac{\Gamma(n+\lambda+4)}{(\lambda+1)\Gamma(n+\lambda+1)}\int_{\DD}\overline{\phi_{n}(w)}f(w)\,(1-|w|^2)d\sigma_{\lambda}(w)-2c_n.
\end{align*}
Consequently, in view of (\ref{Bergman-kernel-6}) and (\ref{derivative-analytic-1}) we obtain
\begin{align*}
D_{z}\left(zf(z)\right)=2\int_{\DD}f(w)K_{\lambda,2}(z,w)(1-|w|^2)d\sigma_{\lambda}(w)-2f(z),
\end{align*}
so that, from (\ref{Bergman-projection-5}) and (\ref{Bergman-kernel-10}),
\begin{align*}
(1-|z|^2)D_{z}\left(zf(z)\right)=2(\widetilde{P}_{\lambda,2}f)(z)-2(1-|z|^2)f(z).
\end{align*}
Therefore, by Theorem \ref{Bergman-projection-c},
$\|(1-|z|^2)D_{z}\left(zf(z)\right)\|_{L_{\lambda}^{p}(\DD)}\lesssim\|\widetilde{P}_{\lambda,2}f\|_{L_{\lambda}^{p}(\DD)}+\|f\|_{A_{\lambda}^{p}}\lesssim\|f\|_{A_{\lambda}^{p}}$
for $1\le p\le\infty$.
\end{proof}

\begin{lemma}\label{derivative-Bergman-b}
If $f$ is $\lambda$-analytic in $\DD$ and satisfies $(1-|z|^{2})^2D_{z}\left(zf(z)\right)\in L^{1}_{\lambda}(\DD)$, then
\begin{align}\label{derivative-Bergman-1}
f(z)=\int_{\DD}\widetilde{K}_{\lambda,1}(z,w)D_{w}\left(wf(w)\right)(1-|w|^{2})^2d\sigma_{\lambda}(w),\qquad z\in\DD,
\end{align}
where $\widetilde{K}_{\lambda,1}(z,w)$ is given by (\ref{Bergman-kernel-9}).
Furthermore, if in addition $(1-|z|^{2})D_{z}\left(zf(z)\right)\in L^{p}_{\lambda}(\DD)$ for $1\le p<\infty$, then $f\in A^{p}_{\lambda}(\DD)$
and $\|f\|_{A_{\lambda}^{p}}\lesssim\|(1-|z|^2)D_{z}\left(zf(z)\right)\|_{L_{\lambda}^{p}(\DD)}$.
\end{lemma}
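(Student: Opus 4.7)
The plan is to establish the integral representation (\ref{derivative-Bergman-1}) first for the smooth dilates $f_s(z):=f(sz)$ ($0<s<1$) by a direct series computation, and then pass to the limit $s\to 1^-$. By Proposition \ref{anal-thm}(iii) write $f(z)=\sum_{n=0}^\infty c_n\phi_n(z)$. The closed form of $\phi_n$ shows it is homogeneous of degree $n$ in the real variables, so $\phi_n(sz)=s^n\phi_n(z)$; hence $f_s(z)=\sum_n c_n s^n\phi_n(z)$, and by (\ref{Tzphi-2}),
\begin{align*}
D_w(wf_s(w))=\sum_n c_n s^n(n+\lambda+1)\phi_n(w),
\end{align*}
which, for each fixed $s<1$, converges absolutely and uniformly on $\overline{\DD}$ thanks to the growth condition on $\{c_n\}$ in Proposition \ref{anal-thm}(iii). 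In particular $D_w(wf_s)\in L^\infty(\DD)$, so the hypothesis of the lemma is trivially satisfied for $f_s$.

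Next I would substitute this series into the right-hand side of (\ref{derivative-Bergman-1}) with $f_s$ in place of $f$; since $|\widetilde K_{\lambda,1}(z,w)|\le C_z$ for fixed $z\in\DD$ by Lemma \ref{Bergman-kernel-c} and the partial sums of $D_w(wf_s)$ are uniformly bounded on $\DD$, bounded convergence permits exchange of sum and integral. A second exchange, this time against the series defining $\widetilde K_{\lambda,1}(z,w)$ in (\ref{Bergman-kernel-9})—also legitimate for fixed $z\in\DD$ by the uniform-on-compacta convergence from (\ref{phi-bound-1})—reduces the evaluation to the orthogonality
\begin{align*}
\int_{\DD}\overline{\phi_n(w)}\phi_m(w)(1-|w|^2)^2 d\sigma_\lambda(w)=\frac{2(\lambda+1)\Gamma(n+\lambda+1)}{\Gamma(n+\lambda+4)}\,\delta_{nm},
\end{align*}
which is a direct consequence of (\ref{Bergman-coefficient-1}) with $j=2$. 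The telescoping identity
\begin{align*}
\frac{(n+\lambda+3)(n+\lambda+2)}{2\lambda+2}\cdot\frac{2(\lambda+1)\Gamma(n+\lambda+1)}{\Gamma(n+\lambda+4)}\,(n+\lambda+1)=1
\end{align*}
then collapses the right-hand side to $\sum_n c_n s^n\phi_n(z)=f_s(z)$, so (\ref{derivative-Bergman-1}) holds for every $f_s$.

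Now I would let $s\to 1^-$. The homogeneity yields $D_w(wf_s(w))=D_w(wf)(sw)$, so the integrands converge pointwise to $\widetilde K_{\lambda,1}(z,w)D_w(wf(w))(1-|w|^2)^2$, while the left side tends to $f(z)$ by continuity. The key obstacle is that $|D_w(wf_s(w))|$ admits no pointwise dominant by a fixed integrable function, so naive dominated convergence fails. My remedy is to split the integral at $|w|=\rho$: on $\DD_\rho$, the series for $D_w(wf_s)$ converges uniformly to that of $D_w(wf)$ (by the coefficient growth condition applied inside any closed subdisc), which together with the uniform bound $|\widetilde K_{\lambda,1}(z,w)|\le C_z$ gives convergence of that piece. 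On $\DD\setminus\DD_\rho$ I exploit the monotonicity $(1-|w|^2)\le(1-|sw|^2)$ and the change of variables $w\mapsto w/s$ to bound the tail by
\begin{align*}
C_z\,s^{-2\lambda-2}\int_{\DD\setminus\DD_{\rho s}}(1-|w|^2)^2|D_w(wf(w))|\,d\sigma_\lambda(w),
\end{align*}
which can be made arbitrarily small by first choosing $\rho$ close to $1$ and then $s$ close to $1$, because $(1-|w|^2)^2 D_w(wf(w))\in L^1_\lambda(\DD)$ by hypothesis. This completes (\ref{derivative-Bergman-1}).

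For the second assertion, note that if $(1-|w|^2)D_w(wf(w))\in L^p_\lambda(\DD)$ with $1\le p<\infty$, then by H\"older's inequality applied on the finite measure space $(\DD,d\sigma_\lambda)$ (since $(1-|w|^2)\in L^{p'}_\lambda(\DD)$), one has $(1-|w|^2)^2 D_w(wf(w))\in L^1_\lambda(\DD)$, so the first part applies. Rewriting (\ref{derivative-Bergman-1}) as $f(z)=(\widetilde P_{\lambda,1}g)(z)$ with $g(w):=(1-|w|^2)D_w(wf(w))\in L^p_\lambda(\DD)$ and invoking the boundedness of $\widetilde P_{\lambda,1}:L^p_\lambda(\DD)\to A^p_\lambda(\DD)$ from Theorem \ref{Bergman-projection-b} immediately yields $f\in A^p_\lambda(\DD)$ with $\|f\|_{A^p_\lambda}\lesssim\|g\|_{L^p_\lambda(\DD)}$, as desired. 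The principal difficulty of the whole argument is the limit $s\to 1^-$ in the integral formula; the splitting combined with the monotonicity-plus-change-of-variables trick is what rescues it.
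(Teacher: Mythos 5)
Your proof is correct, but it establishes the representation \eqref{derivative-Bergman-1} by a genuinely different route than the paper. The paper's argument defines $g(z)$ as the right-hand side of \eqref{derivative-Bergman-1}, applies $D_z(z\,\cdot)$ under the integral sign, uses the kernel identity $D_{z}\bigl(z\widetilde{K}_{\lambda,1}(z,w)\bigr)=K_{\lambda,2}(z,w)$ to recognize $D_z(zg(z))=P_{\lambda,2}\bigl[D_w(wf(w))\bigr](z)$, invokes the reproducing property of $P_{\lambda,2}$ (Proposition \ref{reproducing-Bergman-c} with $j=2$, which applies because $D_z(zf(z))$ is itself $\lambda$-analytic and lies in $L^{1}(\DD;(1-|z|^2)^2d\sigma_{\lambda})$), and concludes $g=f$ by comparing coefficients via the injective map $c_n\mapsto(n+\lambda+1)c_n$. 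You instead verify the identity directly for the dilates $f_s$ by termwise integration against the orthogonality relation \eqref{Bergman-coefficient-1} with $j=2$ and the telescoping product of the normalizing constants (your arithmetic checks out: $\Gamma(n+\lambda+4)=(n+\lambda+3)(n+\lambda+2)(n+\lambda+1)\Gamma(n+\lambda+1)$ makes the product collapse to $1$), and then pass to the limit $s\to1^-$. The delicate point in your route is exactly the one you identify: the absence of an integrable dominant for $|D_w(wf_s(w))|$, and your splitting at $|w|=\rho$ combined with $(1-|w|^2)\le(1-|sw|^2)$ and the change of variables $w\mapsto w/s$ correctly reduces the tail to the absolute continuity of the integral of $(1-|w|^2)^2|D_w(wf(w))|$, which is finite by hypothesis. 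What the paper's approach buys is the avoidance of any limiting argument in $s$ (at the price of justifying differentiation under the integral and relying on the reproducing machinery of $P_{\lambda,2}$); what yours buys is a self-contained computation using only orthogonality and elementary estimates. The second half of your argument (Hölder to get the $L^1$ hypothesis, then $f=\widetilde{P}_{\lambda,1}\bigl[(1-|\cdot|^2)D(\cdot f)\bigr]$ and Theorem \ref{Bergman-projection-b}) coincides with the paper's.
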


\begin{proof}
Set
$$
g(z)=\int_{\DD}\widetilde{K}_{\lambda,1}(z,w)D_{w}\left(wf(w)\right)(1-|w|^{2})^2d\sigma_{\lambda}(w),\qquad z\in\DD.
$$
For $(1-|z|^{2})^2D_{z}\left(zf(z)\right)\in L^{1}_{\lambda}(\DD)$, in view of (\ref{phi-bound-1}) and (\ref{phi-bound-2}) the function $g$ is well defined and $\lambda$-analytic in $\DD$, and moreover,
\begin{align*}
D_{z}\left(zg(z)\right)=\int_{\DD}D_{z}\left(z\widetilde{K}_{\lambda,1}(z,w)\right)D_{w}\left(wf(w)\right)(1-|w|^{2})^2d\sigma_{\lambda}(w),\qquad z\in\DD.
\end{align*}
In the meanwhile, from (\ref{Tzphi-2}), (\ref{Bergman-kernel-6}) and (\ref{Bergman-kernel-9}),
\begin{align*}
D_{z}\left(z\widetilde{K}_{\lambda,1}(z,w)\right)=K_{\lambda,2}(z,w),
\end{align*}
and hence, in view of (\ref{Bergman-projection-3}),
$$
D_{z}\left(zg(z)\right)=P_{\lambda,2}\left[D_{w}\left(wf(w)\right)\right](z),\qquad z\in\DD.
$$
Since $D_{z}\left(zf(z)\right)\in L^{1}(\DD;(1-|z|^2)^2d\sigma_{\lambda})$, and from (\ref{derivative-analytic-1}), $D_{z}\left(zf(z)\right)$ is $\lambda$-analytic in $\DD$, it follows that $P_{\lambda,2}\left[D_{w}\left(wf(w)\right)\right](z)=D_{z}\left(zf(z)\right)$ by Proposition \ref{reproducing-Bergman-c} with $j=2$. Therefore
$$
D_{z}\left(zg(z)\right)=D_{z}\left(zf(z)\right),\qquad z\in\DD.
$$
In view of the $\lambda$-analyticity of $f$ and $g$, comparing the coefficients of $D_{z}\left(zg(z)\right)$ and $D_{z}\left(zf(z)\right)$ as in (\ref{derivative-analytic-1}) yields that $g=f$ in $\DD$. This proves (\ref{derivative-Bergman-1}).

On account of (\ref{Bergman-projection-4}), the equality (\ref{derivative-Bergman-1}) says $f(z)=(\widetilde{P}_{\lambda,1}F)(z)$ with $F(z)=(1-|z|^{2})D_{z}\left(zf(z)\right)$. Now if $F\in L^{p}_{\lambda}(\DD)$ for $1\le p<\infty$, then Theorem \ref{Bergman-projection-b} tells us that $f\in A^{p}_{\lambda}(\DD)$ and $\|f\|_{A_{\lambda}^{p}}\lesssim\|(1-|z|^2)D_{z}\left(zf(z)\right)\|_{L_{\lambda}^{p}(\DD)}$.
\end{proof}

Collecting Lemmas \ref{derivative-Bergman-a} and \ref{derivative-Bergman-b} we have the following theorem.

\begin{theorem}\label{derivative-Bergman-c}
Suppose that $1\le p<\infty$ and $f$ is $\lambda$-analytic in $\DD$. Then $f\in A^{p}_{\lambda}(\DD)$ if and only if $(1-|z|^{2})D_{z}\left(zf(z)\right)\in L^{p}_{\lambda}(\DD)$. Moreover
$$
\|f\|_{A_{\lambda}^{p}}\asymp\|(1-|z|^2)D_{z}\left(zf(z)\right)\|_{L_{\lambda}^{p}(\DD)}.
$$
\end{theorem}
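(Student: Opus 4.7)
The plan is to derive the theorem as a direct consequence of the two preceding lemmas, which have already done the substantive work.

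For the forward direction (``$f\in A_\lambda^p(\DD) \Longrightarrow (1-|z|^2)D_z(zf(z))\in L_\lambda^p(\DD)$''), I will simply invoke Lemma \ref{derivative-Bergman-a}, which already asserts that the operator $f\mapsto (1-|z|^2)D_z(zf(z))$ is bounded from $A_\lambda^p(\DD)$ into $L_\lambda^p(\DD)$ for all $1\le p\le\infty$. This immediately yields both the membership and the estimate
$\|(1-|z|^2)D_z(zf(z))\|_{L_\lambda^p(\DD)}\lesssim \|f\|_{A_\lambda^p}$.

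For the reverse direction I will appeal to Lemma \ref{derivative-Bergman-b}, which requires the stronger hypothesis that $(1-|z|^2)^2 D_z(zf(z))\in L_\lambda^1(\DD)$. The only small check needed is that this stronger condition follows automatically from the assumed condition $(1-|z|^2) D_z(zf(z))\in L_\lambda^p(\DD)$. For $p=1$ this is immediate since $1-|z|^2\le 1$; for $1<p<\infty$ I will write
\begin{align*}
\int_{\DD}(1-|w|^2)^2 |D_w(wf(w))|\,d\sigma_\lambda(w)
 &\le \|(1-|w|^2)D_w(wf(w))\|_{L_\lambda^p(\DD)}\,\|1-|w|^2\|_{L_\lambda^{p'}(\DD)},
\end{align*}
which is finite because $d\sigma_\lambda$ is a finite measure and $1-|w|^2$ is bounded. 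Thus the hypothesis of Lemma \ref{derivative-Bergman-b} is in force, and that lemma delivers both $f\in A_\lambda^p(\DD)$ and the norm bound $\|f\|_{A_\lambda^p}\lesssim \|(1-|z|^2)D_z(zf(z))\|_{L_\lambda^p(\DD)}$.

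Combining the two inequalities establishes the two-sided equivalence $\|f\|_{A_\lambda^p}\asymp \|(1-|z|^2)D_z(zf(z))\|_{L_\lambda^p(\DD)}$. There is no genuine obstacle here: all the hard work is packaged inside Lemmas \ref{derivative-Bergman-a} and \ref{derivative-Bergman-b}, whose proofs themselves rest on the reproducing identity $f=\widetilde{P}_{\lambda,1}\bigl((1-|w|^2)D_w(wf(w))\bigr)$ and the $L^p$-boundedness of $\widetilde{P}_{\lambda,1}$ and $\widetilde{P}_{\lambda,2}$ established in Theorems \ref{Bergman-projection-b} and \ref{Bergman-projection-c}. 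The only thing to be careful about in writing the proof is the trivial upgrade from the $L^p$-integrability of $(1-|z|^2)D_z(zf(z))$ to the $L^1$-integrability of $(1-|z|^2)^2 D_z(zf(z))$ needed to trigger Lemma \ref{derivative-Bergman-b}.
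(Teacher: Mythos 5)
Your proof is correct and follows exactly the paper's route: the paper's entire proof of this theorem is the single line ``Collecting Lemmas \ref{derivative-Bergman-a} and \ref{derivative-Bergman-b} we have the following theorem.'' Your H\"older-inequality check that $(1-|z|^2)D_z(zf(z))\in L^p_{\lambda}(\DD)$ implies $(1-|z|^2)^2D_z(zf(z))\in L^1_{\lambda}(\DD)$ (using that $\sigma_{\lambda}$ is a finite measure) is a valid and welcome filling-in of the one detail the paper leaves implicit.
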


Similarly to Lemma \ref{derivative-Bergman-b} one can prove the following lemma.

\begin{proposition}\label{derivative-Bergman-d}
If $f\in A^{1}_{\lambda}(\DD)$, then
$$
f(z)=\int_{\DD}\widetilde{K}_{\lambda}(z,w)D_{w}\left(wf(w)\right)(1-|w|^{2})d\sigma_{\lambda}(w),\qquad z\in\DD,
$$
where
\begin{align}\label{Bergman-kernel-15}
\widetilde{K}_{\lambda}(z,w)=\sum_{n=0}^{\infty}
\frac{n+\lambda+2}{\lambda+1}\phi_{n}(z)\overline{\phi_{n}(w)}.
\end{align}
\end{proposition}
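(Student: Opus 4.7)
The plan is to adapt the argument of Lemma \ref{derivative-Bergman-b} essentially verbatim, with the pair $(\widetilde K_{\lambda,1}, K_{\lambda,2})$ replaced by $(\widetilde K_\lambda, K_{\lambda,1})$. The structural identity that drives everything is
$$D_z\bigl(z\widetilde K_\lambda(z,w)\bigr)=K_{\lambda,1}(z,w), \qquad zw\in\DD.$$
First I will verify this by comparing coefficients: applying $D_z\circ(z\,\cdot)$ to $\phi_n(z)\overline{\phi_n(w)}$ multiplies the term by $(n+\lambda+1)$ thanks to (\ref{Tzphi-2}), so the coefficient $(n+\lambda+2)/(\lambda+1)$ of (\ref{Bergman-kernel-15}) is sent to $(n+\lambda+1)(n+\lambda+2)/(\lambda+1)$, exactly the coefficient of (\ref{Bergman-kernel-5}). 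Termwise differentiation is legitimate on any compact subset of $\{zw\in\DD\}$ by the bounds (\ref{phi-bound-1}) and (\ref{phi-bound-2}); equivalently, the decomposition $\widetilde K_\lambda=K_\lambda+(\lambda+1)^{-1}C$ follows at once from (\ref{Cauchy-kernel-2-1}) and the definition of $K_\lambda$, so Lemmas \ref{Cauchy-estimate-a} and \ref{Bergman-kernel-a} supply a pointwise bound on $\widetilde K_\lambda$ of the same form as (\ref{Bergman-kernal-2}).

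Next, set
$$g(z):=\int_{\DD}\widetilde K_\lambda(z,w)\,D_w\bigl(wf(w)\bigr)(1-|w|^2)\,d\sigma_\lambda(w),\qquad z\in\DD.$$
For the integral to make sense I invoke Lemma \ref{derivative-Bergman-a} with $p=1$, which yields $(1-|w|^2)D_w(wf(w))\in L_\lambda^1(\DD)$. Since for each fixed $z\in\DD$ the kernel $\widetilde K_\lambda(z,\cdot)$ is uniformly bounded on $\DD$, the integrand is in $L_\lambda^1(\DD)$ and $g(z)$ is well defined. The same domination justifies moving $\partial_{\bar z}$ under the integral, which, together with $D_{\bar z}\widetilde K_\lambda(\cdot,w)=0$ (clear from the termwise argument), shows $g$ is $\lambda$-analytic in $\DD$.

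Now I apply $D_z\circ(z\,\cdot)$ under the integral (again justified by the same domination) and use the kernel identity to obtain
$$D_z\bigl(zg(z)\bigr)=\int_{\DD}K_{\lambda,1}(z,w)\,D_w\bigl(wf(w)\bigr)(1-|w|^2)\,d\sigma_\lambda(w)=P_{\lambda,1}\bigl[D_w(wf(w))\bigr](z).$$
By (\ref{derivative-analytic-1}) the function $h(w):=D_w(wf(w))$ is $\lambda$-analytic, and by Lemma \ref{derivative-Bergman-a} it lies in $L^1(\DD;(1-|w|^2)d\sigma_\lambda)$; hence the reproducing property in Proposition \ref{reproducing-Bergman-c} with $j=1$ gives $P_{\lambda,1}h=h$, and therefore $D_z(zg(z))=D_z(zf(z))$. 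Writing $f(z)=\sum c_n\phi_n(z)$ and $g(z)=\sum d_n\phi_n(z)$ and invoking (\ref{derivative-analytic-1}) on both sides forces $c_n(n+\lambda+1)=d_n(n+\lambda+1)$ for every $n\ge 0$, so $g=f$, which is the desired formula. The main technical point in the plan is really just the a priori integrability used to define $g$: since $f\in A_\lambda^1(\DD)$ alone does not obviously control the twisted quantity $(1-|w|^2)D_w(wf(w))$, the prior Lemma \ref{derivative-Bergman-a} is indispensable; everything else is either coefficient bookkeeping or a standard dominated-convergence interchange.
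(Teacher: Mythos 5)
Your proposal is correct and follows essentially the same route as the paper's own proof: define $g$ via the stated integral (well defined because Lemma \ref{derivative-Bergman-a} gives $(1-|w|^2)D_w(wf(w))\in L^1_\lambda(\DD)$), use the kernel identity $D_z\bigl(z\widetilde K_\lambda(z,w)\bigr)=K_{\lambda,1}(z,w)$ together with the reproducing property of $P_{\lambda,1}$ from Proposition \ref{reproducing-Bergman-c} with $j=1$ to get $D_z(zg(z))=D_z(zf(z))$, and conclude $g=f$ by comparing coefficients as in (\ref{derivative-analytic-1}). The only additions beyond the paper's argument are the explicit decomposition $\widetilde K_\lambda=K_\lambda+(\lambda+1)^{-1}C$ and the more detailed justification of differentiating under the integral, both of which are harmless.
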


\begin{proof}
Set
$$
g(z)=\int_{\DD}\widetilde{K}_{\lambda}(z,w)D_{w}\left(wf(w)\right)(1-|w|^{2})d\sigma_{\lambda}(w),\qquad z\in\DD.
$$
It suffices to show that $g=f$ in $\DD$.

For $f\in A^{1}_{\lambda}(\DD)$, $(1-|z|^{2})D_{z}\left(zf(z)\right)\in L^{1}_{\lambda}(\DD)$ by Lemma \ref{derivative-Bergman-a}, and the function $g$ is well defined in $\DD$. Moreover in view of (\ref{phi-bound-1}) and (\ref{phi-bound-2}), $g$ is $\lambda$-analytic in $\DD$ and
\begin{align*}
D_{z}\left(zg(z)\right)=\int_{\DD}D_{z}\left(z\widetilde{K}_{\lambda}(z,w)\right)D_{w}\left(wf(w)\right)(1-|w|^{2})d\sigma_{\lambda}(w),\qquad z\in\DD.
\end{align*}
In the meanwhile, from (\ref{Tzphi-2}), (\ref{Bergman-kernel-5}) and (\ref{Bergman-kernel-15}),
\begin{align*}
D_{z}\left(z\widetilde{K}_{\lambda}(z,w)\right)=K_{\lambda,1}(z,w),
\end{align*}
and hence
$$
D_{z}\left(zg(z)\right)=P_{\lambda,1}\left[D_{w}\left(wf(w)\right)\right](z),\qquad z\in\DD.
$$
Since, again by Lemma \ref{derivative-Bergman-a}, $D_{z}\left(zf(z)\right)\in L^{1}(\DD;(1-|z|^2)d\sigma_{\lambda})$, and from (\ref{derivative-analytic-1}), $D_{z}\left(zf(z)\right)$ is $\lambda$-analytic in $\DD$, it follows that $P_{\lambda,1}\left[D_{w}\left(wf(w)\right)\right](z)=D_{z}\left(zf(z)\right)$ by Proposition \ref{reproducing-Bergman-c}. Therefore
$$
D_{z}\left(zg(z)\right)=D_{z}\left(zf(z)\right),\qquad z\in\DD.
$$
In view of the $\lambda$-analyticity of $f$ and $g$, comparing the coefficients of $D_{z}\left(zg(z)\right)$ and $D_{z}\left(zf(z)\right)$ as in (\ref{derivative-analytic-1}) proves that $g=f$ in $\DD$.
\end{proof}

\subsection{Interpolation of the $\lambda$-Bergman spaces}

\begin{theorem}\label{Bergman-interpolation-a}
Suppose that $1\leq p_{0}<p_{1}<+\infty$ and
$$\frac{1}{p}=\frac{1-\theta}{p_{0}}+\frac{\theta}{p_{1}},\qquad \theta\in(0,1).$$
Then $[A^{p_{0}}_{\lambda}(\DD),A^{p_{1}}_{\lambda}(\DD)]_{\theta}=A^{p}_{\lambda}(\mathbb{D})$ with equivalent norms.
\end{theorem}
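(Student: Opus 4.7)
The plan is to apply the retract theorem of the complex interpolation method, with the retraction supplied by the bounded operator $P_{\lambda,1}$ constructed in Subsection~6.1. The starting point is Calder\'on's classical interpolation formula for $L^p$ spaces on a finite measure space, which in our setting reads
$$[L^{p_0}_\lambda(\DD), L^{p_1}_\lambda(\DD)]_\theta = L^p_\lambda(\DD)$$
with equal norms (valid since $d\sigma_\lambda$ is a probability measure on $\DD$).

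Next I would assemble the retract. The inclusion $J:A^{p_i}_\lambda(\DD)\hookrightarrow L^{p_i}_\lambda(\DD)$ is an isometric embedding for $i=0,1$. By Theorem~\ref{Bergman-projection-b}, the operator $P_{\lambda,1}$ defined in (\ref{Bergman-projection-2}) is bounded from $L^{p_i}_\lambda(\DD)$ into $A^{p_i}_\lambda(\DD)$ for both $i=0,1$, since $p_i\in[1,\infty)$. The crucial identity $P_{\lambda,1}\circ J=\mathrm{Id}$ on $A^{p_i}_\lambda(\DD)$ is then verified as follows: for $f\in A^{p_i}_\lambda(\DD)$ with $p_i\ge1$, H\"older's inequality against the probability measure $d\sigma_\lambda$ gives $f\in L^1_\lambda(\DD)$, and since $(1-|w|^2)\le1$ one has $f\in L^{1}(\DD;(1-|w|^2)d\sigma_\lambda)$, so Proposition~\ref{reproducing-Bergman-c} (with $j=1$) yields $P_{\lambda,1}f=f$. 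Hence $A^{p_i}_\lambda$ is a retract of $L^{p_i}_\lambda$ via the single pair $(J,P_{\lambda,1})$ for both $i=0,1$ simultaneously.

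Invoking the retract principle for complex interpolation (see, e.g., Bergh--L\"ofstr\"om, Theorem~6.4.2) then gives
$$[A^{p_0}_\lambda(\DD), A^{p_1}_\lambda(\DD)]_\theta = P_{\lambda,1}\bigl([L^{p_0}_\lambda(\DD), L^{p_1}_\lambda(\DD)]_\theta\bigr) = P_{\lambda,1}(L^p_\lambda(\DD)) = A^p_\lambda(\DD),$$
the last equality using that $P_{\lambda,1}:L^p_\lambda\to A^p_\lambda$ is bounded (again by Theorem~\ref{Bergman-projection-b}) and surjective (being the identity on $A^p_\lambda$ by the same argument as above, since $p\ge p_0\ge1$). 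The same retract bookkeeping delivers the equivalence of norms.

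I do not anticipate a genuine obstacle in this argument: the analytic heart of the matter, namely the $L^p$-boundedness of $P_{\lambda,1}$ at the endpoint $p=1$, has already been carried out in Section~6.1 and is precisely what allows interpolation at the lower end $p_0=1$, where the ordinary Bergman projection $P_\lambda$ fails. The only routine verification I would spell out is the two-sided norm equivalence: one direction is immediate from the norm-one inclusion into $[L^{p_0}_\lambda,L^{p_1}_\lambda]_\theta=L^p_\lambda$, while for the reverse, given $f\in A^p_\lambda$ one takes an admissible analytic family $F(\zeta)$ in $L^{p_0}_\lambda+L^{p_1}_\lambda$ with $F(\theta)=f$ and nearly minimal boundary bounds, and then $G(\zeta):=P_{\lambda,1}(F(\zeta))$ is an admissible family in $A^{p_0}_\lambda+A^{p_1}_\lambda$ satisfying $G(\theta)=P_{\lambda,1}f=f$, with boundary bounds controlled by the operator norms of $P_{\lambda,1}$ on $L^{p_0}_\lambda$ and $L^{p_1}_\lambda$ respectively.
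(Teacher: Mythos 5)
Your proposal is correct and rests on exactly the same mechanism as the paper's proof: the operator $P_{\lambda,1}$, bounded on $L^{p}_{\lambda}(\DD)$ for all $1\le p<\infty$ by Theorem \ref{Bergman-projection-b} and reproducing $\lambda$-analytic functions by Proposition \ref{reproducing-Bergman-c} with $j=1$, serves as a retraction onto the Bergman spaces. The paper merely writes out by hand what the retract theorem packages abstractly --- it builds the Calder\'on family $f_{\zeta}$ and sets $F_{\zeta}=P_{\lambda,1}f_{\zeta}$, which is precisely the admissible family your argument produces --- so the two proofs are essentially identical.
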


\begin{proof}
For $f\in A^{p}_{\lambda}(\mathbb{D})$, as usual we consider the family of functions
$$
f_{\zeta}(z)=\frac{f(z)}{|f(z)|}|f(z)|^{p\left(\frac{1-\zeta}{p_0}+\frac{\zeta}{p_1}\right)},\qquad z\in\DD,
$$
where $\zeta$ takes the value of the closed strip with its real part between $0$ and $1$. Further, let $F_{\zeta}(z)=(P_{\lambda,1}f)(z)$, that is,
\begin{align*}
F_{\zeta}(z)=
\int_{\DD}f_{\zeta}(w)K_{\lambda,1}(z,w)(1-|w|^2)d\sigma_{\lambda}(w),\qquad z\in\DD.
\end{align*}

By Proposition \ref{reproducing-Bergman-c}, for fixed $\zeta$, $F_{\zeta}(z)$ is $\lambda$-analytic in $\DD$, and $F_{\theta}(z)=f_{\theta}(z)=f(z)$; moreover by Theorem \ref{Bergman-projection-a},
\begin{align*}
\|F_{\zeta}\|^{p_0}_{A_{\lambda}^{p_0}}\lesssim\|f_{\zeta}\|^{p_0}_{L_{\lambda}^{p_0}(\DD)}\asymp\|f\|^p_{A_{\lambda}^{p}}
\end{align*}
for all $\zeta$ with ${\rm Re}\,\zeta=0$, and
\begin{align*}
\|F_{\zeta}\|^{p_1}_{A_{\lambda}^{p_1}}\lesssim\|f_{\zeta}\|^{p_1}_{L_{\lambda}^{p_1}(\DD)}\asymp\|f\|^p_{A_{\lambda}^{p}}
\end{align*}
for all $\zeta$ with ${\rm Re}\,\zeta=1$. It is easy to see that the mapping $\zeta\mapsto F_{\zeta}\in A_{\lambda}^{p_0}+A_{\lambda}^{p_1}$ is continuous when $0\le{\rm Re}\,\zeta\le1$ and (usual) analytic when $0<{\rm Re}\,\zeta<1$. Thus it has been proved that $f=F_{\theta}\in[A^{p_{0}}_{\lambda}(\DD),A^{p_{1}}_{\lambda}(\DD)]_{\theta}$ and $\|f\|^p_{[A^{p_{0}}_{\lambda},A^{p_{1}}_{\lambda}]_{\theta}}\lesssim\|f\|^p_{A_{\lambda}^{p}}$.

Conversely, if $f\in[A^{p_{0}}_{\lambda}(\DD),A^{p_{1}}_{\lambda}(\DD)]_{\theta}$, then $f$ is $\lambda$-analytic in $\DD$, and $f\in[L^{p_{0}}_{\lambda}(\DD),L^{p_{1}}_{\lambda}(\DD)]_{\theta}=L^{p}_{\lambda}(\DD)$. This shows that $f\in A^{p}_{\lambda}(\mathbb{D})$ and $\|f\|^p_{A_{\lambda}^{p}}\lesssim\|f\|^p_{[A^{p_{0}}_{\lambda},A^{p_{1}}_{\lambda}]_{\theta}}$. The proof of the theorem is completed.
\end{proof}

The above theorem for $\lambda=0$, i.e., for the usual Bergman spaces, was proved in \cite{MZ1}.

\vskip .3in

{\bf ACKNOWLEDGMENTS}

\vskip .1in

The work is supported by the National Natural Science Foundation of China (Grant No. 12071295).
The authors would like to thank the anonymous referees for their helpful comments and suggestions which have improved the original manuscript.

\end{document}